\documentclass[11pt]{article}
\def\thetitle{Minimum degree conditions for graph rigidity}

\usepackage{graphicx}

\usepackage{amsmath,amssymb}

\usepackage[usenames,dvipsnames,svgnames,table]{xcolor}
\definecolor{CombinatoricaAqua}{HTML}{00698C}
\definecolor{CombinatoricaBlue}{HTML}{3A3293}
\definecolor{CombinatoricaBrown}{HTML}{66220C}
\definecolor{CombinatoricaRed}{HTML}{DF2A27}
\definecolor{HarvardCrimson}{rgb}{0.6471, 0.1098, 0.1882}
\definecolor{DAGreen}{HTML}{339900}

\makeatletter
\let\reftagform@=\tagform@
\def\tagform@#1{\maketag@@@
	{(\ignorespaces\textcolor{CombinatoricaBrown}{#1}\unskip\@@italiccorr)}}
\renewcommand{\eqref}[1]{\textup{\reftagform@{\ref{#1}}}}
\makeatother

\usepackage[backref=page]{hyperref}
\hypersetup{%
	unicode,
	pdfencoding=auto,
	pdfauthor={Peleg Michaeli},
	pdftitle={\thetitle},
	pdfsubject={},
	pdfkeywords={},
	colorlinks=true,
	citecolor=CombinatoricaBlue,
	linkcolor=CombinatoricaAqua,
	anchorcolor=CombinatoricaBrown,
	urlcolor=HarvardCrimson}

\usepackage{amsthm}
\usepackage{thmtools}
\usepackage{bbm}
\usepackage{enumitem}
\usepackage[capitalize]{cleveref}
\Crefname{mainthm}{Theorem}{Theorems}
\Crefname{fact}{Fact}{Facts}
\Crefname{claim}{Claim}{Claims}

\makeatletter
\pdfstringdefDisableCommands{\let\cref\@firstofone}
\makeatother

\declaretheoremstyle[
spaceabove=\topsep, spacebelow=\topsep,
headfont=\color{CombinatoricaBrown}\normalfont\bfseries,
bodyfont=\itshape,
]{thm}
\declaretheoremstyle[
spaceabove=\topsep, spacebelow=\topsep,
headfont=\color{CombinatoricaBrown}\normalfont\bfseries,
bodyfont=\normalfont,
]{dfn}
\declaretheoremstyle[
spaceabove=0.5\topsep, spacebelow=0.5\topsep,
headfont=\color{CombinatoricaBrown}\normalfont\bfseries,
bodyfont=\normalfont,
]{rmk}

\declaretheorem[style=thm,name=Theorem]{mainthm}
\declaretheorem[style=thm,parent=section]{theorem}
\declaretheorem[style=thm,sibling=theorem]{lemma}
\declaretheorem[style=thm,sibling=theorem]{corollary}
\declaretheorem[style=thm,sibling=theorem]{claim}
\declaretheorem[style=thm,sibling=theorem]{proposition}

\declaretheorem[style=thm,sibling=mainthm]{conjecture}


\usepackage[nobysame,msc-links,non-sorted-cites]{amsrefs}

\renewcommand{\eprint}[1]{\href{https://arxiv.org/abs/#1}{arXiv:#1}}

\BibSpec{book}{%
	+{}  {\PrintPrimary}                {transition}
	+{,} { \textbf}                     {title} 
	+{.} { }                            {part}
	+{:} { \textit}                     {subtitle}
	+{,} { \PrintEdition}               {edition}
	+{}  { \PrintEditorsB}              {editor}
	+{,} { \PrintTranslatorsC}          {translator}
	+{,} { \PrintContributions}         {contribution}
	+{,} { }                            {series}
	+{,} { \voltext}                    {volume}
	+{,} { }                            {publisher}
	+{,} { }                            {organization}
	+{,} { }                            {address}
	+{,} { \PrintDateB}                 {date}
	+{,} { }                            {status}
	+{}  { \parenthesize}               {language}
	+{}  { \PrintTranslation}           {translation}
	+{;} { \PrintReprint}               {reprint}
	+{.} { }                            {note}
	+{.} {}                             {transition}
	+{}  {\SentenceSpace \PrintReviews} {review}
}
\BibSpec{incollection}{%
  +{}  {\PrintAuthors}                {author}
  +{,} { \textit}                     {title}
  +{.} { }                            {part}
  +{:} { \textit}                     {subtitle}
  +{,} { \PrintContributions}         {contribution}
  +{,} { \PrintConference}            {conference}
  +{}  {\PrintBook}                   {book}
  +{,} { }                            {booktitle}
	+{}  { \PrintEditorsB}              {editor}
	+{,} { }                            {publisher}
  +{,} { \PrintDateB}                 {date}
  +{,} { pp.~}                        {pages}
  +{,} { }                            {status}
  +{,} { \PrintDOI}                   {doi}
  +{,} { available at \eprint}        {eprint}
  +{}  { \parenthesize}               {language}
  +{}  { \PrintTranslation}           {translation}
  +{;} { \PrintReprint}               {reprint}
  +{.} { }                            {note}
  +{.} {}                             {transition}
  +{}  {\SentenceSpace \PrintReviews} {review}
}

\makeatletter
\renewcommand{\PrintNames@a}[4]{%
	\PrintSeries{\name}
	{#1}
	{}{ and \set@othername}
	{,}{ \set@othername}
	{}{ and \set@othername}
	{#2}{#4}{#3}%
}
\makeatother

\makeatletter
\def\mathcolor#1#{\@mathcolor{#1}}
\def\@mathcolor#1#2#3{%
	\protect\leavevmode
	\begingroup
	\color#1{#2}#3%
	\endgroup
}
\makeatother
\definecolor{Red}{rgb}{0.618,0,0}
\definecolor{Blue}{rgb}{0,0,1}
\definecolor{Green}{rgb}{0,0.298,0}

\usepackage{sectsty}
\sectionfont{\color{CombinatoricaBrown}}
\subsectionfont{\color{CombinatoricaBrown}}
\subsubsectionfont{\color{CombinatoricaBrown}}

\usepackage{soul}
\soulregister\ref7


\usepackage{pifont}
\usepackage{calc}

\usepackage[a4paper]{geometry}
\geometry{
	letterpaper,
	left=1in,
	right=1in,
	top=1in,
	bottom=1in
}

\title{\thetitle}
\author{
  Michael Krivelevich\thanks{
    School of Mathematical Sciences,
    Tel Aviv University,
    Tel Aviv 6997801, Israel.
    Email: \href{mailto:krivelev@tauex.tau.ac.il}
                {\tt krivelev@tauex.tau.ac.il}.
    Research supported in part by NSF-BSF grant 2023688.
  }
  \and
  Alan Lew\thanks{
    Department of Mathematical Sciences, Mellon College of Science, Carnegie Mellon University, Pittsburgh, PA 15213,
USA.
    Email: \href{mailto:alanlew@andrew.cmu.edu}
                {\tt alanlew@andrew.cmu.edu}.
  }
  \and
  Peleg Michaeli\thanks{
    Mathematical Institute,
    University of Oxford,
    Oxford, UK.
    Email: \href{mailto:peleg.michaeli@maths.ox.ac.uk}
                {\tt peleg.michaeli@maths.ox.ac.uk}.
    Research supported by ERC Advanced Grant 883810.
    For the purpose of Open Access, the author has applied a CC BY public
    copyright licence to any Author Accepted Manuscript version arising from
    this submission.
  }
}

\makeatletter
\def\namedlabel#1#2{\begingroup
  #2%
  \def\@currentlabel{#2}%
  \phantomsection\label{#1}\endgroup
}
\makeatother

\usepackage{mleftright}
\mleftright
\newcommand{\defn}[1]{{\bfseries #1}}

\newcommand{\midd}{\ \middle\vert\ }

\newcommand{\eps}{\varepsilon}
\renewcommand{\phi}{\varphi}

\newcommand{\RR}{\mathbb{R}}

\newcommand{\cB}{\mathcal{B}}

\newcommand{\cE}{\mathcal{E}}



\newcommand{\sR}{\mathsf{R}}

\newcommand{\sm}{\smallsetminus}
\newcommand{\es}{\varnothing}

\newcommand{\floor}[1]{\left\lfloor{#1}\right\rfloor}

\DeclareMathOperator{\rank}{rank}

\newcommand{\vect}{\mathbf}


\newcommand{\y}{\vect{y}}



\newcommand{\pr}[0]{\mathbb{P}}
\newcommand{\E}[0]{\mathbb{E}}

\newcommand{\whp}[0]{\textbf{whp}}

\newcommand{\Dist}[1]{\mathsf{#1}}
\newcommand{\Bin}{\Dist{Bin}}

\newcommand{\Cl}{\mathsf{C}}
\newcommand{\OK}{OK}
\newcommand{\p}{\vect{p}} 
\newcommand{\R}{\sR} 


\usepackage{tabto}

\usepackage{comment}

\usepackage{subcaption}
\usepackage{tikz,pgfplots}
\pgfplotsset{compat=1.16}
\usetikzlibrary{calc}

\begin{document}

\maketitle

\begin{abstract}
  We study minimum degree conditions that guarantee that an $n$-vertex graph is rigid in $\mathbb{R}^d$.
  For small values of $d$,
  we obtain a tight bound: for $d = O(\sqrt{n})$,
  every $n$-vertex graph with minimum degree at least $(n+d)/2 - 1$ is rigid in $\mathbb{R}^d$.
  For larger values of $d$,
  we achieve an approximate result: for $d = O(n/{\log^2}{n})$,
  every $n$-vertex graph with minimum degree at least $(n+2d)/2 - 1$ is rigid in $\mathbb{R}^d$.
  This bound is tight up to a factor of two in the coefficient of $d$.

  As a byproduct of our proof,
  we also obtain the following result, which may be of independent interest: for $d = O(n/{\log^2}{n})$, every $n$-vertex graph with minimum degree at least $d$
  has pseudoachromatic number at least $d+1$;
  namely, the vertex set of such a graph can be partitioned into $d+1$ subsets
  such that there is at least one edge between each pair of subsets.
  This is tight.
\end{abstract}

\section{Introduction}

A \defn{$d$-dimensional framework}
is a pair $(G,\p)$ consisting of a graph $G=(V,E)$ and an embedding $\p:V\to \RR^d$.
Given such a framework, one may ask:
is there a continuous motion of the vertices,
starting from the positions prescribed by $\p$,
that preserves the distance between all pairs of adjacent vertices of $G$?
The answer is always trivially positive:
translations and rotations of the whole graph preserve all such distances.
We say that the framework $(G,\p)$ is \defn{rigid} if,
aside from these trivial motions,
there is no continuous motion of the vertices that preserves the distance between all pairs of adjacent vertices.
An embedding $\p$ is \defn{generic} if its $d|V|$ coordinates are algebraically independent over the rationals.
Asimow and Roth showed in \cite{AR78} that, for generic $\p$, the rigidity of $(G,\p)$ depends only on the graph $G$.
We say that $G$ is \defn{$d$-rigid} if $(G,\p)$ is rigid for a generic embedding $\p:V\to \RR^d$.

In practice, it is often easier to work with the related notion of infinitesimal rigidity,
defined as follows.
Let $(G,\p)$ be a $d$-dimensional framework.
Let $C=\{ (v,i):\, v\in V,\, i\in[d]\}$.  
The \defn{rigidity matrix} of $(G,\p)$, denoted by $\R(G,\p)$, is an $E\times C$ matrix defined by
\[
    \R(G,\p)_{e,(u,i)}= \begin{cases}
        \p(u)_i-\p(v)_i & \text{if } e=\{u,v\} \text{ for some } v\in V,\\
        0 & \text{if } u\notin e
    \end{cases}
\]
for all $e\in E$ and $(u,i)\in C$. 
In other words, the rows of $\R(G,\p)$ are indexed by the edges of $G$,
its columns are indexed by the vertices, where each vertex is assigned $d$ consecutive columns,
and the row vector indexed by an edge $e=\{u,v\}\in E$ is supported on the $2d$ columns associated with $u$ and $v$,
where it is equal to $\p(u)-\p(v)$ and $\p(v)-\p(u)$ respectively.
For simplicity, from now on we will always assume that the image of $\p$ is $d$-dimensional
(that is, there is no hyperplane in $\RR^d$ containing all points in the image).
In particular, we assume that $G$ has at least $d+1$ vertices.
It is known that the rank of the rigidity matrix is always at most $d|V|-\binom{d+1}{2}$ (see \cite{AR78}).
We say that $(G,\p)$ is \defn{infinitesimally rigid} if this bound is achieved,
that is, if $\rank(\R(G,\p))=d|V|-\binom{d+1}{2}$.
Infinitesimal rigidity of a framework $(G,\p)$ implies its rigidity \cite{Gluck1974}.
Moreover, for generic embeddings, both notions are equivalent \cite{AR78}. For more background on the theory of rigid graphs, see for example \cites{schulze2017rigidity,GSS1993,jordan2016rigidity}.

In this paper, we study minimum degree sufficient conditions for rigidity of graphs.
For a graph $G$, let $\delta(G)$ denote its minimum degree.
A graph $G$ on $n>d$ vertices is said to be \defn{$d$-connected}
if it remains connected whenever fewer than $d$ vertices are removed.
It is a well-known fact that every $d$-rigid graph is $d$-connected\footnote{%
Indeed, otherwise, let $S$ be a set of size smaller than $d$ such that $G[V\sm S]$ is not connected,
and let $U$ be a connected component of $G[V\sm S]$.
Then, we can rotate the vertices of $U$ around $S$ independently of the rest of the graph
while preserving the lengths of all edges in $G$ --- a contradiction to the $d$-rigidity of $G$.}.
Another well-known and easy to prove fact is that every graph with minimum degree of at least $(n+d)/2-1$
is $d$-connected.
This statement is sharp.
Indeed, for every $m,n,d$ with $0\le d\le \min\{m,n\}$, let $\OK_{m,n,d}$ denote the union
of an $m$-clique and an $n$-clique that intersect in exactly $d$ vertices.
Let $d\le n$ be integers with the same parity, and let $m=(n+d)/2\ge d$.
Note that $\delta(\OK_{m,m,d})=(n+d)/2-1$.
On the other hand, $\OK_{m,m,d}$ is $d$-connected, but not $(d+1)$-connected.
(In fact, using standard rigidity results, one can show that $\OK_{m,n,d}$ is $d$-rigid  for every $m,n\ge d$.)

One might wonder whether every graph $G$ with $\delta(G)\ge (n+d)/2-1$ is also $d$-rigid
(and indeed, we will show that this is the case when $d$ is small relative to $n$).
However, note that by the definition of the rigidity matrix,
in order for an $n$-vertex graph $G=(V,E)$ to be $d$-rigid, one must have $|E|\ge d n-\binom{d+1}{2}$.
This is achieved if $\delta(G) \ge 2d-d(d+1)/n$.
Therefore, we conjecture the following (see also \cref{fig:conj}):

\begin{conjecture}\label{conj:main}
  Let $n>d\ge 1$.
  If $G$ is an $n$-vertex graph with
  \[\delta(G)\ge\max\left\{\frac{n+d}{2}-1,2d-\frac{d(d+1)}{n}\right\},\]
  then $G$ is $d$-rigid.
\end{conjecture}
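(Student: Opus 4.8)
Since the statement above is a conjecture, this plan targets the ranges of $d$ we can actually reach and isolates the obstruction to the rest. The first step is to split the range of $n$ at the crossover of the two lower bounds: a short computation gives $2d-\tfrac{d(d+1)}{n}\ge\tfrac{n+d}{2}-1$ exactly for $d<n\le 2d+2$. This yields a \emph{dense regime} $n\le 2d+2$, where the active hypothesis is $\delta(G)\ge 2d-\tfrac{d(d+1)}{n}$ (equivalently, after averaging, $e(G)\ge dn-\binom{d+1}{2}$) and $G$ is a clique minus a graph of maximum degree roughly $(d-1)/2$; and a \emph{sparse regime} $n\ge 2d+2$, where the active hypothesis is $\delta(G)\ge\tfrac{n+d}{2}-1$, placing $G$ just above a disjoint union of two cliques of order $\tfrac{n+d}{2}$.

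The toolbox is standard rigidity machinery. The \emph{gluing principle}: if $H_1,\dots,H_k$ are $d$-rigid subgraphs of $G$ whose vertex sets cover $V(G)$ and the auxiliary graph on $\{H_1,\dots,H_k\}$ --- with $H_i$ joined to $H_j$ whenever $|V(H_i)\cap V(H_j)|\ge d$ --- is connected, then $G$ is $d$-rigid; this follows by induction from the fact that the union of two $d$-rigid graphs sharing at least $d$ vertices is $d$-rigid. The elementary $d$-rigid blocks are cliques $K_m$ with $m\ge d+1$ and the graphs $\OK_{m,n,d}$, and the elementary rigidity-preserving operations are adjoining to a $d$-rigid graph on $\ge d+1$ vertices a new vertex of degree $\ge d$ (a $0$-extension) and coning a $(d-1)$-rigid graph. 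The strategy is then uniform: inside $G$, build a spanning ``rigid skeleton'' from such blocks with pairwise intersections of size $\ge d$, using $\delta(G)$ to supply both the blocks and the intersections, and absorb any leftover vertices one at a time by $0$-extensions --- legal as long as the leftover set has size at most $\delta(G)-d$, since then each leftover vertex still has $\ge d$ neighbours in the part already built.

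Carrying this out in the sparse regime would amount to covering all but at most $\delta(G)-d$ vertices by a connected chain of $d$-rigid blocks $B_1,\dots,B_t$, each on $\ge d+1$ vertices with $|B_i\cap B_{i+1}|\ge d$ (for small $d$ the $B_i$ can be taken to be cliques or $\OK$-type subgraphs), the existence of the chain coming from a structural analysis of graphs with $\delta(G)\ge\tfrac{n+d}{2}-1$; one then glues and mops up by $0$-extensions. The input that organizes the parts and forces the required overlaps is a combinatorial lemma about complete partitions, which we would isolate as the pseudoachromatic statement advertised in the abstract --- valid up to $d=O(n/\log^2 n)$ and proved by a counting/probabilistic argument with no rigidity content. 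The graph $\OK_{(n+d)/2,(n+d)/2,d}$ shows the degree bound cannot be lowered. In the dense regime $n\le 2d+2$ one instead assembles the skeleton directly from the near-completeness of $G$, again finishing by $0$-extensions.

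The real difficulty, and the reason the conjecture is still open, is making the scheme survive for \emph{all} $d$ rather than $d=O(\sqrt n)$ (respectively $d=O(n/\log^2 n)$ for the weaker bound $\tfrac{n+2d}{2}-1$). Two obstacles dominate. First, the step that turns ``$G$ is dense'' into ``$G$ is covered by large $d$-rigid blocks meeting in $\ge d$ vertices'' degrades as $d$ grows toward $n$: a graph with $\delta(G)\ge\tfrac{n+d}{2}-1$ need not contain \emph{any} clique of order exceeding $d$ --- balanced complete tripartite graphs with $n\ge 3d$ already have clique number $3$ yet satisfy the hypothesis, so this fails for every $d\ge 3$ --- hence the skeleton must be built from rigid subgraphs of no standard shape, and controlling their intersections becomes the crux; the stated ranges $d=O(\sqrt n)$ and $d=O(n/\log^2 n)$ are precisely where the available arguments close, the logarithmic loss being inherited from the pseudoachromatic lemma. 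Second, in dimension $d\ge 3$ there is no usable combinatorial characterization of $d$-rigid graphs (Laman's theorem has no higher-dimensional analogue), so the ``build-up by moves'' half of the plan cannot be replaced by an edge count; one must track ranks of rigidity matrices directly, and it is this that introduces the factor-of-two slack in the coefficient of $d$.
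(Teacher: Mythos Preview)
The statement is a conjecture; the paper does not prove it. What the paper does prove are the two partial results you correctly identify (tight for $d=O(\sqrt{n})$, within a factor of two for $d=O(n/\log^2 n)$), but the routes it takes are quite different from yours.

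For the tight bound at small $d$, the paper does not build a chain of overlapping rigid blocks. Instead it works in the $d$-closure $\Cl_d(G)$ and invokes a lemma of Vill\'anyi: any $d$-closed graph with minimum degree at least $d(d+1)$ contains a simplicial vertex. The simplicial vertex yields a clique of size $\delta(G)+1\ge (n+d)/2$ in the closure, and then a short argument (\cref{lem:largerigid}) shows the closure is complete. The quadratic constraint $d(d+1)\le\delta(G)$ is exactly what produces the $d=O(\sqrt{n})$ cutoff. Nothing resembling ``gluing blocks with $\ge d$ overlap'' appears.

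For the approximate bound, the paper's organising dichotomy is not sparse/dense in $n$ versus $d$, but \emph{close to bipartite} versus \emph{far from bipartite}. Far from bipartite, the Regularity Lemma plus stability yields a triangle in the reduced graph, hence a super-regular tripartite subgraph; this subgraph is shown to be $d$-rigid via \emph{rigid partitions} (\cref{thm:strong_rigid_partition}), and then one absorbs the remaining vertices by $0$-extensions --- so your mop-up step does appear here, but the rigid seed is a single block found by regularity, not a chain. Close to bipartite, the paper again uses rigid partitions directly, and it is in constructing the partition on one side of the near-bipartition that the pseudoachromatic lemma enters --- not to ``force overlaps'' between blocks, but to guarantee that a random $d$-colouring of one side is pseudocomplete.

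The genuine gap in your plan is the step you flag yourself: ``the existence of the chain coming from a structural analysis''. You never say how to produce rigid blocks $B_i$ with $|B_i\cap B_{i+1}|\ge d$ in a graph that may have clique number $3$, and the pseudoachromatic lemma as stated gives a complete partition, not overlapping rigid subgraphs. The paper's answer is to abandon the overlapping-blocks picture entirely and use rigid partitions --- a partition $V_1,\ldots,V_d$ with every $G[V_i,V_j]$ connected --- which certifies $d$-rigidity without ever identifying a rigid subgraph of intersection $\ge d$.
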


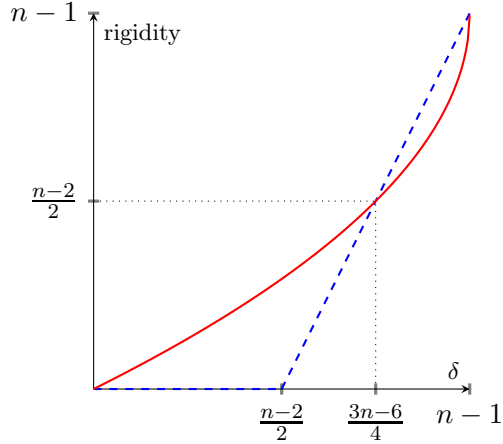
\begin{figure}
\captionsetup{width=0.879\textwidth,font=small}
\begin{center}
  \begin{tikzpicture}
    \begin{axis}[
        clip=false,
        x=5cm,y=5cm,
        axis lines=middle,
        xmin=0,
        xmax=1,
        ymin=0,
        ymax=1,
        xlabel={\footnotesize $\delta$},
        ylabel={\footnotesize rigidity},
        xtick={0,1/2,3/4,1},
        xticklabels={,$\frac{n-2}{2}$,$\frac{3n-6}{4}$,$n-1$},
        ytick={0,1/2,1},
        yticklabels={,$\frac{n-2}{2}$,$n-1$},
        tick style={very thick},
        legend style={
            at={(rel axis cs:0.25,0.75)},
            anchor=north west,draw=none,inner sep=0pt}
      ]
      \addplot[domain={0:1/2},blue,dashed,thick,samples=10] {0};
      \addplot[domain={1/2:1},blue,dashed,thick,samples=10] {2*x-1};
      \addplot[domain={0:1},red,thick,samples=200] {1-sqrt(1-x)};
      \draw[dotted] (axis cs:0,0.5) -- (axis cs:0.75,0.5) -- (axis cs:0.75,0);
    \end{axis}
  \end{tikzpicture}
\end{center}
\caption{ We define the \defn{rigidity} of a graph $G=(V,E)$ to be the maximal $d$ for which $G$ is $d$-rigid. 
  In the above diagram, the $x$-axis shows the minimum degree, while the $y$-axis displays the
  rigidity of the graph. The red solid line represents the bound $\delta(G)\ge 2d-d(d+1)/n$, determined by the necessary condition $|E|\ge dn-\binom{d+1}{2}$. 
  The blue dashed line
  represents the bound $\delta(G)\ge (n+d)/2-1$, determined by the necessary condition of $d$-connectivity. These lines intersect when $\delta(G) = (3n-6)/4$. The conjectured
  lower bound on the graph's rigidity is the minimum of these two lines.
}\label{fig:conj}
\end{figure}

Note that \cref{conj:main} is a stronger version of Conjecture~1.16 in~\cite{KLM23+}.
It also includes, as a special case, a recently-proved~\cite{HMN24++} conjecture by the second author
(see \cite{CN23}*{Conjecture~3.4})
on the rigidity of the hyperoctahedral graph.
In \cite[Conjecture 38]{Jac16}, Jackson, Jord\'an, and Tanigawa made an analogous conjecture
(in the special case of fixed $d$ and growing $n$) in the context of low rank matrix completability
(which can be seen as a generalisation of rigidity to the setting of graphs with self-loops, see~\cite{Jac14}).

In \cite{KLM23+}, we proved some weak versions of \cref{conj:main}.
Specifically, we showed that if an $n$-vertex graph $G$ satisfies $\delta(G)\ge n/2+ k$,
then $G$ is $d$-rigid for $d=2k/(3\log{n})$ (\cite[Theorem~1.14]{KLM23+}).
Additionally, if $k=O(\sqrt{n}/\log{n})$, then  $G$ is $(k+1)$-rigid
(\cite[Theorem 1.15]{KLM23+}).
Here, we make further progress towards \cref{conj:main}. First, we show that the conjecture holds for small values of $d$.

\begin{theorem}\label{thm:exact}
  Let $n\ge 2$ and $0\le d\le (\sqrt{8n-15}-1)/4$.
  If $G$ is an $n$-vertex graph with $\delta(G)\ge (n+d)/2-1$, 
  then $G$ is $d$-rigid.
\end{theorem}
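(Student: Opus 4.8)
To prove \cref{thm:exact}, the plan is to argue by contradiction, following the template of the classical proof that $\delta(G)\ge (n+d)/2-1$ forces $d$-connectivity. Assume $G$ has $n$ vertices, $\delta(G)\ge (n+d)/2-1$, and $G$ is not $d$-rigid. I would use three standard facts from rigidity theory: $K_m$ is $d$-rigid for every $m\ge d+1$; if $H_1,H_2\subseteq G$ are $d$-rigid with $|V(H_1)\cap V(H_2)|\ge d$ then $H_1\cup H_2$ is $d$-rigid (gluing); and if $v$ is a vertex of $G$ with $\deg_G(v)\ge d$ and $G-v$ is $d$-rigid, then $G$ is $d$-rigid ($0$-extension). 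Since $G$ is not $d$-rigid, it has at least two maximal $d$-rigid subgraphs (they cover $V$, while $V$ itself is not one of them); let $G[W_1],\dots,G[W_k]$, $k\ge 2$, be the induced subgraphs on their vertex sets, the \emph{rigid components} of $G$. Maximality together with gluing forces $|W_i\cap W_j|\le d-1$ for all $i\ne j$, and every edge of $G$ lies inside some $W_i$. Hence $N_G(v)\subseteq\bigl(\bigcup_{i\,:\,v\in W_i}W_i\bigr)\setminus\{v\}$ for every vertex $v$, so $\bigl|\bigcup_{i\,:\,v\in W_i}W_i\bigr|\ge\deg_G(v)+1\ge (n+d)/2$.

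This immediately settles the two-component case, the analogue of the single-separator argument. If a vertex $v$ lies in exactly one component $W_i$, the last inequality gives $|W_i|\ge (n+d)/2$; but two components of size $\ge (n+d)/2$ would satisfy $|W_i|+|W_j|=|W_i\cup W_j|+|W_i\cap W_j|\le n+(d-1)$, which is impossible. So at most one component has size $\ge (n+d)/2$, and every vertex outside it lies in at least two components; when $k=2$ both $W_1\setminus W_2$ and $W_2\setminus W_1$ are nonempty and each forces its component to have size $\ge (n+d)/2$, a contradiction. For $k\ge 3$ I would try to reduce to this case by showing that some pair of components can be merged, i.e.\ that the union of two of the $W_i$ (or a $d$-rigid subgraph spanning two of them) is $d$-rigid, contradicting maximality. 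The degree hypothesis should provide the extra structure needed to glue across an overlap of size only $d-1$: a vertex of $W_i\setminus W_j$ has at least $(n+d)/2-1$ neighbours, distributed among the components that contain it, so either its own component is large or it is shared with several others, and accumulating this over all of $W_i\cup W_j$ ought to yield enough linking vertices between $W_i$ and $W_j$ to force a merge.

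The real obstacle, I expect, is exactly this: ruling out a large number $k\ge 3$ of small, heavily overlapping rigid components. The naive counts — $\sum_i|W_i|=\sum_v|\{i:v\in W_i\}|$, the inclusion–exclusion bound $\sum_i|W_i|\le n+\binom{k}{2}(d-1)$ from $\bigcup_iW_i=V$, and the edge estimates $\sum_i|E(G[W_i])|\ge|E(G)|\ge\tfrac12 n\,\delta(G)$ together with $|E(G[W_i])|\ge d|W_i|-\binom{d+1}{2}$ — do not close by themselves, and one has to use that each $W_i$ \emph{induces} a dense subgraph of $G$ and that components meeting in fewer than $d$ vertices carry cross-edges only through other components. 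The quadratic-in-$d$ error terms that enter this analysis, namely an $\binom{d+1}{2}$ rigidity deficiency per component and a $\binom{k}{2}(d-1)$ total overlap, are precisely what the hypothesis $2d^2+d+2\le n$ (equivalently $d\le(\sqrt{8n-15}-1)/4$) is designed to absorb. It is worth noting that the tempting shortcut of building a $d$-rigid spanning subgraph greedily by repeated $0$-extensions from a maximal $d$-rigid \emph{induced} subgraph provably fails: by maximality, every vertex outside such a subgraph has at most $d-1$ neighbours inside it, forcing it to have at most $(n+d)/2-1$ vertices, whereas the extension argument would require one on at least $\lceil(n+d)/2\rceil$ vertices. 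The theorem therefore genuinely needs a global argument of the kind sketched above.
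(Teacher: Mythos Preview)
Your proposal has a genuine gap: the $k\ge 3$ case is not proved, only sketched with phrases like ``ought to yield'' and ``should provide''. The rigid-components approach you outline is natural, but the overlap combinatorics you describe (inclusion--exclusion, edge counts, $\binom{d+1}{2}$ deficiencies) do not obviously close, and you have not exhibited the merging argument you say you need. It is not clear this route can be made to work without a substantial new idea.

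The paper's proof is entirely different and much shorter. It works in the $d$-closure $G'=\Cl_d(G)$ rather than with rigid components of $G$, and the key input is a lemma of Vill\'anyi (\cref{lem:simpvx} here): any $d$-closed graph with minimum degree at least $d(d+1)$ contains a \emph{simplicial} vertex, i.e.\ one whose neighbourhood is a clique. The hypothesis $d\le(\sqrt{8n-15}-1)/4$ is exactly the condition $d(d+1)\le (n+d)/2-1\le\delta(G)$, so Vill\'anyi's lemma applies to $G'$ and produces a clique of size at least $\delta(G)+1\ge (n+d)/2$ in $G'$. A short absorption argument (\cref{lem:largerigid}) then shows that any $d$-closed graph with $\delta\ge(n+d)/2-1$ containing a clique of size $\ge d+(n-1-\delta)\le(n+d)/2$ must be complete; hence $G'=K_n$ and $G$ is $d$-rigid.

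So the quadratic bound on $d$ is not absorbing overlap error terms as you speculated; it is there solely to meet the $\delta\ge d(d+1)$ threshold of the simplicial-vertex lemma. Your final paragraph correctly identifies that a naive greedy $0$-extension from a maximal rigid induced subgraph fails, but the resolution is not a more careful component analysis --- it is to pass to the closure, where ``large rigid subgraph'' becomes ``large clique'' and Vill\'anyi's lemma supplies one for free.
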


The proof relies on a recent argument by Vill\'anyi~\cite{Vil23+}
which shows that a graph $G$ with minimum degree at least $d(d+1)$
has a vertex whose neighbours form a clique in the $d$-closure of $G$
(see \cref{sec:closure} for the definition of a $d$-closure).

Handling the case of larger values of $d$ turns out to be significantly more challenging.
Here, we obtain the following result.

\begin{theorem}\label{thm:approx}
  There exist $c>0$ and $n_0\ge 2$ for which the following holds.
  For every $n\ge n_0$ and $1\le d\le cn/{\log^2}n$,
  every $n$-vertex graph $G$ with $\delta(G)\ge (n+2d)/2-1$ is $d$-rigid.
\end{theorem}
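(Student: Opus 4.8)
The plan is to use the $d$-closure machinery of \cref{sec:closure} to reduce the theorem to exhibiting one large clique in $C_d(G)$, and then to produce that clique with the help of a partition result that is exactly the pseudoachromatic statement of the abstract.

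\emph{Reduction.} First I would show that it suffices to find a set $A\subseteq V$ with $|A|\ge n/2$ that spans a clique in the $d$-closure $C_d(G)$. Given such an $A$, every vertex $v\in V\sm A$ has at least $\delta(G)-(n-|A|-1)\ge d$ neighbours in $A$ — this is where the hypothesis $\delta(G)\ge(n+2d)/2-1$ together with $|A|\ge n/2$ enters — so, writing $B_v=N_G(v)\cap A$, the set $\{v\}\cup B_v$ also spans a clique in $C_d(G)$ (pairs inside $B_v$ lie in $A$; pairs through $v$ are edges of $G$). Hence, with $K[\cdot]$ denoting the complete graph, $K[A]\cup\bigcup_{v\in V\sm A}K[\{v\}\cup B_v]$ is a spanning subgraph of $C_d(G)$, and it is $d$-rigid: $K[A]$ is a clique on $|A|\ge d+1$ vertices, hence $d$-rigid, and each further clique $K[\{v\}\cup B_v]$ shares the $\ge d$ vertices $B_v$ with the portion built so far, so the gluing lemma keeps the union $d$-rigid. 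Thus $C_d(G)$ contains a spanning $d$-rigid subgraph, so $C_d(G)$ — and therefore $G$ — is $d$-rigid. (Equivalently one can finish inside $G$ itself by a sequence of Henneberg $0$-extensions; the bound $|A|\ge n/2$ is precisely what forbids a nonempty "stalled" set $R\subseteq V\sm A$, which would satisfy $\delta(G[R])\ge n/2$ and hence $|R|>n/2$.)

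\emph{Producing the clique.} It remains to find a clique of size $\ge n/2$ in $C_d(G)$. When $d=O(\sqrt n)$ this is immediate from Vill\'anyi's lemma, as in \cref{thm:exact}: some vertex $w$ has $N_G(w)$ spanning a clique of $C_d(G)$, and then $N_G(w)\cup\{w\}$ does too, with $|N_G(w)\cup\{w\}|\ge\delta(G)+1\ge n/2$. For larger $d$ that lemma is useless — its hypothesis $\delta(G)\ge d(d+1)$ can far exceed $n$, and $G$ need not contain a single $K_{d+1}$ — so the seed rigid pieces must be built inside the closure, using a partition of $V$ into $d+1$ classes with an edge between every two classes. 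Its existence I would isolate as the lemma: \emph{if $\delta(G)\ge d$ and $d\le cn/\log^2 n$, then $V$ can be partitioned into $d+1$ parts with at least one edge between each pair} — this is the pseudoachromatic statement, and it is best possible since a graph with pseudoachromatic number $k$ has at least $\binom{k}{2}$ edges and the $\OK$-type graphs are extremal. I would prove this by a random partition: colour each vertex uniformly in $[d+1]$; for a fixed pair of colours, the probability that no edge joins the two classes is $\exp(-\Omega(n/d))$ once the dependencies among edges sharing a vertex are controlled, and a union bound over the $\binom{d+1}{2}$ pairs forces $n/d$ to dominate $\log^2 n$ — which is the source of the restriction $d\le cn/\log^2 n$, the second logarithm being the price of the concentration step taming those dependencies.

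\emph{Main obstacle.} The hard part is turning this combinatorial partition into a genuine $d$-rigid structure inside $C_d(G)$: the partition links classes only by single edges, whereas rigidity gluing needs overlaps of size $d$. My plan is to use the large minimum degree to fatten the rigid pieces — aiming first for a clique of $C_d(G)$ of size about $2d$ — and then to bootstrap by induction on $n$: if $A^*$ is a \emph{maximum} clique of $C_d(G)$ and $|A^*|<n/2$, then no vertex of $R:=V\sm A^*$ can have $\ge d$ neighbours in $A^*$ (else $A^*$ extends, by a $0$-extension performed inside $C_d(G)$), so $\delta(G[R])\ge\delta(G)-(d-1)\ge n/2\ge(|R|+2d)/2-1$ whenever $|A^*|\ge 2d-2$; the inductive hypothesis then makes $G[R]$ entirely $d$-rigid, so $R$ spans a clique of $C_d(G)$ strictly larger than $A^*$, a contradiction. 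So everything hinges on extracting from the partition an initial clique of $C_d(G)$ of size comparable to $d$, and on carrying the induction through with uniform constants; I expect this to be the principal difficulty, and it is also what prevents the method from reaching the conjectured range $d=O(n)$.
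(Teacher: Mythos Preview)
Your reduction is fine and matches the paper's \cref{lem:largerigid}: a clique of size $\ge n/2$ in $C_d(G)$ forces $C_d(G)$ to be complete. Your inductive scheme is also sound \emph{once} you have a clique of size $\ge 2d-2$ in $C_d(G)$. But that base case is the whole difficulty, and you have not supplied it. The pseudoachromatic partition gives a single edge between each pair of $d+1$ colour classes; there is no mechanism that converts this into a $(2d-2)$-clique in the closure, and indeed for graphs close to $K_{n/2,n/2}$ (which have $\delta \ge n/2$ and satisfy your hypotheses) it is not clear that $C_d(G)$ contains \emph{any} clique of size $d+1$, let alone $2d$. Your honest acknowledgement that ``everything hinges on extracting from the partition an initial clique of $C_d(G)$ of size comparable to $d$'' is exactly right; unfortunately this is not a detail but the heart of the problem, and your proposal does not address it.

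The paper's route is quite different and avoids the closure entirely for the main step. It splits on whether $G$ is $\beta$-close to bipartite. If \emph{far}, the Regularity Lemma plus stability produces a triangle in the reduced graph, hence a super-regular tripartite subgraph on $\Theta(n)$ vertices, which is shown (via a random colouring) to admit a strong $d$-rigid partition; this gives a $d$-rigid subgraph of size $\ge 2d$, and then a pigeonhole argument (\cref{lem:comb}, slightly sharper than your degree count and needing only $|A|\ge 2d$ rather than $|A|\ge n/2$) finishes by $0$-extensions. If \emph{close}, one takes the near-bipartition $A'\cup B'$, applies the pseudoachromatic lemma not to $G$ but to $G[A']$ (where the larger side has $\delta\ge d-1$), and crucially uses the \emph{randomness} of that colouring together with a uniform random colouring of $B'$ to build a strong $d$-rigid partition of all of $G$. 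So the pseudoachromatic result is used, but as a random colouring with independence properties, not as a black-box partition, and only on half the graph; the bipartite structure supplies the connectivity of $G[V_i,V_j]$ that a bare pseudocomplete colouring cannot.
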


The bound on the rigidity of $G$ in  \cref{thm:approx} is optimal up to a factor of two.
In order to prove \cref{thm:approx}, we divide the argument into two cases,
depending on whether the graph $G$ is close to being a bipartite graph or not.
In case the graph is close to being bipartite,
we apply a result by Lew, Nevo, Peled, and Raz~\cite{LNPR23+}
(see also~\cite{KLM23+}),
which provides a sufficient condition for the rigidity of a graph
in terms of the existence of a ``$d$-rigid partition''
(a partition of the vertex set of the graph satisfying certain connectivity properties;
see \cref{sec:rigid_partition}). 
When the graph is far from bipartite,
we first apply the Regularity Lemma to find a linearly large $d$-rigid subgraph of $G$
(using again the method of rigid partitions).
We continue by showing
how the existence of this large $d$-rigid subgraph implies the rigidity of the whole graph.

As a corollary to the proof of \cref{thm:approx},
we obtain a related result concerning the pseudoachromatic number of graphs with a given minimum degree.
A \defn{pseudocomplete colouring} of a graph is a colouring of its vertices
--- not necessarily proper ---
such that each pair of colours appears together on at least one edge. 
Equivalently, it is a partition of the vertex set in which every two distinct parts are connected by at least one edge.
The \defn{pseudoachromatic number} of a graph $G$,
denoted $\psi_s(G)$,
is the maximum number of colours in a pseudocomplete colouring of that graph.
This concept was first introduced by Gupta~\cite{Gup69},
and was later used in the study of the {\em Hadwiger number} of a graph; 
see, e.g.,~\cites{BCE80,Kos82,Tho01}.

\begin{theorem}\label{thm:pseudo}
  There exist $c>0$ and $n_0\ge 2$ for which the following holds.
  For every $n\ge n_0$ and $1\le d\le cn/{\log^2}n$,
  every $n$-vertex graph $G$ with $\delta(G)\ge d$
  has $\psi_s(G)\ge d+1$.
\end{theorem}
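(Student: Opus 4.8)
The plan is to restate the conclusion as a partition problem and then to argue along the same ``close to bipartite'' versus ``far from bipartite'' dichotomy that underlies the proof of \cref{thm:approx}. A pseudocomplete colouring of $G$ with $d+1$ colours is precisely a partition $V(G)=V_1\cup\dots\cup V_{d+1}$ with $e(V_i,V_j)\ge1$ for all $i\ne j$; since any leftover vertices can be absorbed into $V_1$, it is enough to exhibit $d+1$ pairwise-disjoint nonempty sets $C_1,\dots,C_{d+1}$ that are pairwise joined by an edge. Two immediate reductions: we may assume $n$ is as large as we please, and we may pass to a single connected component --- each component has minimum degree $\ge d$ and hence at least $d+1$ vertices, a component on exactly $d+1$ vertices being a copy of $K_{d+1}$, which already does the job --- after which a component on only $O(d)$ vertices (necessarily rather dense) can be dealt with by a short direct argument and is not where the difficulty lies.

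Suppose first that $G$ is far from bipartite, quantified so that in particular $G$ has no independent set of size $(1-\eps)n$ for an appropriate $\eps>0$. I would then take a uniformly random balanced partition of $V(G)$ into $d+1$ parts, so that each part has size $s:=\lceil n/(d+1)\rceil$, which is $\Omega(\log^2 n)$ by the hypothesis $d=O(n/\log^2 n)$. Fix $i\ne j$ and condition on $V_i$; then $e(V_i,V_j)=0$ exactly when $V_j$ avoids the neighbourhood $N(V_i)$, and one shows that for a typical choice of $V_i$ the set $N(V_i)$ has linear size (this is where ``far from bipartite'' enters, and is also the technical heart: the relevant concentration of $|N(V_i)|$ needs a bounded-differences argument together with the standard device of first setting aside the few very-high-degree vertices). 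Consequently $\pr[e(V_i,V_j)=0]\le e^{-\Omega(\log^2 n)}$, and a union bound over the $\binom{d+1}{2}\le n^2$ pairs completes this case; note that it is exactly the bound $d=O(n/\log^2 n)$ --- equivalently, parts of size $\gg\log n$ --- that gives enough room for the probabilistic estimate to beat this union bound.

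If instead $G$ is close to bipartite, with bipartition $V=A\cup B$ and only few edges inside $A$ or inside $B$, the random partition breaks down --- in $K_{d,n-d}$ the side $A$ has only about $d$ vertices and cannot be spread into $d+1$ nonempty parts, so a typical part falls inside the nearly independent set $B$ and misses its siblings --- and here one must build the sets more or less by hand, imitating the construction that shows $\psi_s(K_{d,N})\ge d+1$. One first checks $|A|,|B|\ge(1-o(1))d$, then puts (most of) $B$ randomly into all $d+1$ parts and adds the vertices of $A$ one at a time into as many distinct parts as possible, using that every vertex still has $(1-o(1))d$ neighbours on the opposite side --- spread out by the randomness --- so that a Hall/defect argument forces each part to see every other through a cross edge; the at most one part receiving no vertex of $A$, and the few edges inside $A$ and inside $B$, have to be patched up separately. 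I expect this near-bipartite regime --- and, inside it, the sub-case where one side is of size barely $d$ (equivalently, $G$ has a very small vertex cover) --- to be the main obstacle: the probabilistic argument is unavailable, the structure is only approximately complete bipartite, and all $\binom{d+1}{2}$ pairs must be handled simultaneously by a delicate deterministic assignment. This is also the point at which the argument meshes with the rigid-partition analysis behind \cref{thm:approx}, which is why the statement comes out as a byproduct of that proof.
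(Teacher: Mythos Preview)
Your bipartite/non-bipartite split is the wrong dichotomy for this statement, and the far-from-bipartite branch contains a genuine gap. You claim that ``far from bipartite'' (no independent set of size $(1-\eps)n$) forces a random $s$-set $V_i$ to have $|N(V_i)|=\Omega(n)$; but the complement $V\setminus(V_i\cup N(V_i))$ need not be independent --- there is no edge between it and $V_i$, yet it may be full of internal edges --- so the hypothesis gives you nothing here. What actually governs the size of $N(V_i)$ is an expansion estimate, and for that the bounded-differences argument you allude to needs a Lipschitz constant of order $\Delta(G)$, which is useless once $\Delta(G)$ is large. You acknowledge this with ``the standard device of first setting aside the few very-high-degree vertices'', but that device is the whole proof, not a side remark: once you strip off high-degree vertices and run expansion on the bounded-degree remainder, the bipartite hypothesis plays no role and the close-to-bipartite case evaporates.

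This is exactly how the paper proceeds. \Cref{thm:pseudo} is an immediate corollary of \cref{lem:randomcol}, whose proof uses only a high-degree/low-degree split: take $L=\{v:\deg(v)\ge 5d\log n\}$, place $\ell=\min\{|L|,d\}$ of these vertices deterministically into classes $1,\dots,\ell$, and colour the rest independently with a suitable probability vector. Classes $1,\dots,\ell$ see every other class through their planted high-degree vertex. For the remaining classes one works in the subgraph on $V\setminus L'$, which has minimum degree $\ge d-\ell$ and maximum degree $<5d\log n$; the key expansion step is \cref{lem:random_uniform_subset}, applied not to the whole class $V_i$ but to a random $K$-subset with $K=\Theta(\log n)$, giving $|N(U)|\ge (d-\ell)K/3$ --- which is $\Theta(d\log n)$, not linear, but large enough that a class of size $\Omega(n/d)$ hits it with polynomially small failure probability. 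The constraint $\Delta\le n/(4K)$ of that lemma, combined with $\Delta<5d\log n$, is precisely what produces the $d=O(n/\log^2 n)$ range. The bipartite/non-bipartite dichotomy is used in the paper only for \cref{thm:approx}, where $\delta(G)\ge n/2$ makes the graph dense; it is irrelevant to \cref{thm:pseudo}.
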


We remark that \cref{thm:pseudo} is sharp in two senses.
First, the lower bound on $\psi_s(G)$ cannot be improved in terms of $d$.
Indeed, consider the complete bipartite graph $K_{d,n}$ with sides of sizes $d$ and $n\ge d$.
Then, $\delta(K_{d,n})=d$,
and any pseudocomplete colouring of $K_{d,n}$
must have at most one colour class without an element from the smaller side.
Indeed, if two distinct colour classes do not contain an element from the smaller side,
they are not connected by an edge.
Thus, $\psi_s(K_{d,n})\le d+1$ (and, in fact, $\psi_s(K_{d,n})=d+1$).
Second, the upper bound on $d$ cannot be significantly improved in terms of $n$.
Indeed, the binomial random graph $G(n,1/2)$ has $\delta(G(n,1/2))\ge (1/2-o(1))n$ and $\psi_s(G(n,1/2))=O(n/\sqrt{\log{n}})$
with high probability\footnote{\whp{}; that is, with probability tending to $1$ as $n\to\infty$}
(see~\cites{BCE80,HKRS07}).

~

The paper is organised as follows. 
In \cref{sec:prelims} we introduce some preliminary results on rigidity of graphs,
and define our main tool (rigid partitions).
In \cref{sec:small} we prove \cref{thm:exact},
and in \cref{sec:large} we prove \cref{thm:approx,thm:pseudo}.

\paragraph{Notation}
Let $G=(V,E)$ be a graph.
Write $V(G)=V$ and $E(G)=E$.
For two (not necessarily disjoint) vertex sets $A$, $B$,
we let $E_G(A, B)$
be the set of edges having one endpoint in $A$ and the other in $B$,
and write $E_G(A)=E_G(A,A)$.
We denote by $N_G(A)$ the
{\em external} neighbourhood of $A$,
that is, the set of all vertices in $V\sm A$
that have a neighbour in $A$.
Furthermore, in the above notation, we often replace $\{v\}$ with $v$ for abbreviation.
The degree of a vertex $v\in V$,
denoted by $\deg_G(v)$,
is its number of incident edges.
The degree of $v$ ``into'' $A$,
denoted $\deg_G(v,A)$,
is $|E_G(v,A)|$.
When the underlying graph is clear from the context, we drop the subscript $G$ and simply write $E(A,B)$, $E(A)$,  $N(A)$ etc.
We write $\Delta(G)$ for the maximum degree of $G$.
Throughout the paper, all logarithms are in the natural basis.

\section{Preliminaries}\label{sec:prelims}

\subsection{The $d$-closure of a graph}\label{sec:closure}

Let $n\ge d+1$, and let $G=(V,E)$ be an $n$-vertex graph.
 For $\{u,v\}\in\binom{V}{2}$, let $G+\{u,v\}$ be the graph obtained from $G$ by adding the edge $\{u,v\}$ (where, if $\{u,v\}\in E$, then $G+\{u,v\}=G$).

Let $\p:V\to\RR^d$ be a generic embedding. 
The \defn{$d$-rigidity closure} (or, in short, \defn{$d$-closure}) of $G$ is the graph $\Cl_d(G)$ on vertex set $V$ and edge set
\[
  E(\Cl_d(G))= \left\{
    \{u,v\}\in \binom{V}{2}\midd \rank(\R(G,\p))=\rank(\R(G+\{u,v\},\p))
  \right\}.
\]

Note that $\Cl_d(G)$ does not depend on $\p$, provided $\p$ is generic.
We say that $G$ is \defn{$d$-closed} if it is its own $d$-closure.
Observe that a graph is $d$-rigid if and only if its $d$-closure is $d$-rigid,
which is if and only if its $d$-closure is complete.

We will need the following simple lemma about $d$-closed graphs (see e.g. \cite{LNPR23}*{Observation~2.1}).

\begin{lemma}\label{lem:extension_in_closure}
    Let $G=(V,E)$ be a $d$-closed graph. Let $U$ be a clique of size $d$ in $G$, and let
    $v,w\in V\sm U$ be two distinct vertices such that both $v$ and $w$ are adjacent to all the vertices in $U$. Then, $\{v,w\}\in E$.
\end{lemma}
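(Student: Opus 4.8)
The plan is to work directly with the rigidity matrix at a generic embedding and to reduce the statement to a classical fact about small complete graphs. Fix a generic embedding $\p\colon V\to\RR^d$. Since $G$ is $d$-closed, $\{v,w\}\in E(G)=E(\Cl_d(G))$ precisely when $\rank\R(G+\{v,w\},\p)=\rank\R(G,\p)$; as $\R(G+\{v,w\},\p)$ differs from $\R(G,\p)$ by the single extra row $\row_{\{v,w\}}$ (supported on the $2d$ columns of $v$ and $w$), it therefore suffices to show that $\row_{\{v,w\}}$ lies in the row space of $\R(G,\p)$.

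The point is that all the edges we need are already present in $G$. Consider the graph $H$ on the $d+2$ vertices $U\cup\{v,w\}$ whose edges are all pairs inside $U\cup\{v\}$ together with the $d$ pairs $\{w,u\}$ for $u\in U$; equivalently, $H=K_{d+2}-\{v,w\}$ on this vertex set. Because $U$ is a clique of $G$ and both $v$ and $w$ are adjacent to every vertex of $U$, every edge of $H$ is an edge of $G$. A rigidity-matrix row indexed by an edge $\{a,b\}$ depends only on $\p(a)$ and $\p(b)$, so after padding with zeros on the columns outside $U\cup\{v,w\}$ the row space of $\R(H,\p)$ is contained in the row space of $\R(G,\p)$. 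Hence it is enough to prove that $\row_{\{v,w\}}$ already lies in the row space of $\R(H,\p)$, i.e.\ that adjoining the edge $\{v,w\}$ to $H$ does not increase the rank of its rigidity matrix.

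For this I would invoke the standard $0$-extension (vertex-addition) move: $K_{d+1}$ is $d$-rigid (a generic $d$-simplex cannot be flexed keeping edge lengths), and attaching a new vertex by exactly $d$ edges to a $d$-rigid graph on at least $d$ vertices yields a $d$-rigid graph. Applying this with $K_{d+1}$ on $U\cup\{v\}$ and the new vertex $w$ joined to $U$ shows that $H$ is $d$-rigid. Since the restriction of $\p$ to $U\cup\{v,w\}$ is again generic (a subset of an algebraically independent set is algebraically independent), we get $\rank\R(H,\p)=d(d+2)-\binom{d+1}{2}$, which by \cite{AR78} is the largest possible rank of the rigidity matrix of any graph on $d+2$ vertices in $\RR^d$. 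Consequently $\rank\R(H+\{v,w\},\p)=\rank\R(H,\p)$, so $\row_{\{v,w\}}$ lies in the row space of $\R(H,\p)$, hence in that of $\R(G,\p)$, giving $\{v,w\}\in E(\Cl_d(G))=E(G)$.

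There is no genuinely hard step here; the only things requiring a little care are the bookkeeping of which rigidity matrices live in which column space (dealt with by the zero-padding remark) and the genericity of the restricted embedding (immediate). If one prefers to avoid quoting the $0$-extension lemma, one can instead cite directly that $K_{d+2}$ minus an edge is generically isostatic in $\RR^d$ — it has exactly $d(d+2)-\binom{d+1}{2}$ edges and is well known to be independent and rigid — which plugs into the same argument.
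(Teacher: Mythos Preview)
Your argument is correct. The paper does not actually prove this lemma; it merely cites \cite{LNPR23}*{Observation~2.1}. Your proof is the standard one: the subgraph on $U\cup\{v,w\}$ consisting of the two $(d+1)$-cliques $U\cup\{v\}$ and $U\cup\{w\}$ glued along the $d$-clique $U$ (that is, $K_{d+2}$ minus the edge $\{v,w\}$) is generically $d$-rigid, so its rigidity matrix already has full rank $d(d+2)-\binom{d+1}{2}$ and the extra row $\row_{\{v,w\}}$ is dependent; by monotonicity of the row space under edge inclusion this dependence persists in $\R(G,\p)$.

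One small point of presentation: in this paper the $0$-extension property (\cref{lem:extension}) is stated \emph{as a consequence} of the lemma you are proving, so invoking it here would look circular within the paper's internal logic, even though $0$-extension is of course independently well known. Your own suggested alternative---appealing directly to the fact that $K_{d+2}$ minus an edge is generically isostatic in $\RR^d$, or equivalently that $\OK_{d+1,d+1,d}$ is $d$-rigid (which the paper uses freely elsewhere)---avoids this and is the cleaner way to slot your argument into the paper.
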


As an immediate consequence, we obtain the following well known property (see e.g.~\cite{TW1985}).

\begin{lemma}[$0$-extension property]\label{lem:extension}
    Let $G=(V,E)$ be a $d$-rigid graph, and let $v\notin V$. Let $G'$ be a graph obtained from $G$ by adding the vertex $v$ and adding $d$ edges between $v$ and $V$. Then, $G'$ is $d$-rigid.
\end{lemma}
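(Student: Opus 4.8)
The plan is to use the characterisation noted above that a graph is $d$-rigid if and only if its $d$-closure is complete, together with \cref{lem:extension_in_closure}. Write $V'=V\cup\{v\}$, let $U\subseteq V$ be the set of $d$ vertices of $V$ to which $v$ is joined in $G'$, and fix a generic embedding $\p\colon V'\to\RR^d$. I want to show that $\Cl_d(G')$ is the complete graph on $V'$; this gives that $G'$ is $d$-rigid.

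First I would show that $\Cl_d(G')$ contains every edge within $V$, i.e.\ $\binom{V}{2}\subseteq E(\Cl_d(G'))$. The point is that $G$, viewed on the vertex set $V$, is a subgraph of $G'$: for each $e=\{a,b\}\in E(G)\subseteq E(G')$ the row $\R(G',\p)_e$ is supported on the columns indexed by $a$ and $b$ (both in $V$) and equals $\R(G,\p|_V)_e$ there, so the rows of $\R(G,\p|_V)$, padded with zeros on the $d$ columns indexed by $v$, form a subset of the rows of $\R(G',\p)$. Since $\p|_V$ is generic and $G$ is $d$-rigid, $\Cl_d(G)$ is complete on $V$, so for every $\{x,y\}\in\binom{V}{2}$ the corresponding rigidity-matrix row (which, being supported on the columns of $x$ and $y$, is the same whether computed from $\p|_V$ or from $\p$) lies in the row space of $\R(G,\p|_V)$, hence in the row space of $\R(G',\p)$. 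Therefore $\{x,y\}\in E(\Cl_d(G'))$, as claimed; in particular $U$ is a clique of size $d$ in $\Cl_d(G')$.

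Next I would bring in the vertex $v$. The graph $H:=\Cl_d(G')$ is $d$-closed (the $d$-closure of any graph is its own $d$-closure), $U$ is a clique of size $d$ in $H$, and $v$ is adjacent in $H$ to all of $U$ (already in $G'$). For any $w\in V\sm U$, the vertex $w$ is also adjacent in $H$ to all of $U$ by the previous paragraph, and $w\neq v$; so \cref{lem:extension_in_closure}, applied to $H$, the clique $U$, and the pair $v,w$, yields $\{v,w\}\in E(H)$. Since a $d$-rigid graph has at least $d+1$ vertices we have $|V|\ge d+1$, so this accounts for every $w\in V\sm U$. Combining the edges within $V$, the edges from $v$ to $U$, and the edges from $v$ to $V\sm U$, we conclude that $H=\Cl_d(G')$ is complete on $V'$, and hence $G'$ is $d$-rigid.

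I do not expect a real obstacle: this is essentially a routine application of \cref{lem:extension_in_closure}. The only two points needing a little care are (i) the change of ground set from $V$ to $V'=V\cup\{v\}$, handled by the zero-padding observation relating $\R(G,\p|_V)$ to a submatrix of $\R(G',\p)$ and by the fact that the restriction of a generic embedding is generic; and (ii) the harmless case $|V|=d$, which cannot occur because a $d$-rigid graph has at least $d+1$ vertices.
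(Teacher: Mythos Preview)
Your proposal is correct and follows exactly the route the paper indicates: the paper does not spell out a proof but simply presents the lemma as ``an immediate consequence'' of \cref{lem:extension_in_closure} (with a reference to~\cite{TW1985}), and you have written out precisely those details---first pulling all of $\binom{V}{2}$ into $\Cl_d(G')$ via the zero-padding of $\R(G,\p|_V)$, then invoking \cref{lem:extension_in_closure} on the $d$-clique $U$ to connect $v$ to every $w\in V\sm U$. Your handling of the two minor technical points (genericity of $\p|_V$, and $|V|\ge d+1$) is also fine.
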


\subsection{Rigid partitions}\label{sec:rigid_partition}

Let $V$ be a finite set. We say that $V_1,\ldots,V_m\subseteq V$ is a \defn{colouring} of $V$ if $V_i\cap V_j=\es$ for all $1\le i<j\le m$ and $V_1\cup\cdots\cup V_m= V$. If, in addition, $V_i\ne \es$ for all $1\le i\le m$, we say that $V_1,\ldots,V_m$ is a \defn{partition} of $V$.

Let $G=(V,E)$ be a graph. For $A,B\subseteq V$, let $G[A,B]$ be  
the subgraph of $G$ on vertex set $A\cup B$ with edges
\[
    \left\{ e\in E:\, e\subseteq A\cup B,\, e\cap A\ne \es,\, e\cap B\ne \es\right\}.
\]
In particular, for $A=B$ we have $G[A,A]=G[A]$, the induced subgraph of $G$ on $A$.

Let $V_1,\ldots,V_d$ be a partition of $V$. We say that $(V_1,\ldots,V_d)$ is a \defn{strong $d$-rigid partition} of $G$ if $G[V_i,V_j]$ is connected for all $1\le i\le j\le d$.

The following sufficient condition for rigidity was proved by Lew, Nevo, Peled, and Raz in~\cite{LNPR23+}%
\footnote{%
In~\cite{KLM23+},
we proved an extension of \cref{thm:strong_rigid_partition}
that applies to a more general notion
of ``rigid partitions''.}.

\begin{theorem}[{\cite[Theorem 1.3]{LNPR23+}}]\label{thm:strong_rigid_partition}
    Let $G=(V,E)$ be a graph with $|V|\ge d+1$. If $G$ admits a strong $d$-rigid partition, then $G$ is $d$-rigid.
\end{theorem}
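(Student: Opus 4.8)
**Plan for proving Theorem~\ref{thm:strong_rigid_partition} (sufficient condition for rigidity via strong $d$-rigid partitions).**

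The plan is to induct on the number of vertices $|V|$, building up the $d$-closure of $G$ edge by edge until it becomes complete, using the $0$-extension property (\cref{lem:extension}) and the clique-extension property of $d$-closed graphs (\cref{lem:extension_in_closure}). First I would dispose of the base case: when $|V| = d+1$, the partition $(V_1,\dots,V_d)$ has $d$ parts summing to $d+1$ vertices, so exactly one part has two vertices and the rest are singletons; connectivity of $G[V_i,V_j]$ for all $i \le j$ forces enough edges that $G$ contains $K_{d+1}$ (or at least enough to be $d$-rigid — a complete graph on $d+1$ vertices is $d$-rigid, and one checks the connectivity conditions already give all $\binom{d+1}{2}$ edges). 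For the inductive step, the key idea is to locate a vertex $v$ that is ``removable'': a vertex lying in some part $V_i$ such that deleting $v$ still leaves a strong $d$-rigid partition of $G - v$ on the remaining parts, and such that $v$ has degree at least $d$ in $G$, or more precisely such that $N_G(v)$ contains a $d$-clique in the $d$-closure of $G-v$. Then by induction $G-v$ is $d$-rigid, hence $\Cl_d(G-v)$ is complete; re-adding $v$ with its $\ge d$ edges and invoking \cref{lem:extension} (after first using \cref{lem:extension_in_closure} to see the neighbourhood of $v$ behaves like a clique) shows $G$ is $d$-rigid.

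The main obstacle — and where I would concentrate the real work — is showing such a removable vertex always exists and that removing it preserves the strong $d$-rigid partition structure. Here one must be careful: deleting an arbitrary vertex from $V_i$ could disconnect some $G[V_i,V_j]$. The natural move is to pick $v$ to be a non-cut vertex of $G[V_i, V_j]$ simultaneously manageable across all $j$; but a single vertex need not be a non-cut vertex of every $G[V_i,V_j]$. I would instead argue that one can choose the part $V_i$ and the vertex $v \in V_i$ so that $v$ is a leaf (or at least a non-cut vertex) in a spanning tree of each relevant $G[V_i,V_j]$, OR — more likely the route the authors take — allow $V_i$ to shrink to the empty set and reduce the number of parts, handling the boundary case where some part becomes a singleton or empty separately. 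A cleaner alternative is: take a part $V_i$ of maximum size; if $|V_i| \ge 2$, a careful counting/connectivity argument produces $v \in V_i$ whose deletion keeps all $G[V_i \setminus v, V_j]$ connected (e.g.\ $v$ chosen as a non-cut vertex of the connected graph $G[V_i]$ that is also not needed for connectivity to the other sides, using that each $G[V_i,V_j]$ connected means $v$ has a "bypass"); then $\deg_G(v) \ge d$ follows because $v$ must connect to $G[V_i,V_j]$ for each $j \ne i$ (giving $d-1$ edges) plus at least one more within $V_i$, and one upgrades "$d$ neighbours" to "$d$ neighbours forming a clique in $\Cl_d(G-v)$" via completeness of that closure.

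To summarise the step order: (1) base case $|V|=d+1$ by direct inspection; (2) inductive step — select a largest part $V_i$, find a vertex $v\in V_i$ whose removal preserves the strong $d$-rigid partition on $G-v$ (the crux), noting $\deg_G(v)\ge d$; (3) apply the inductive hypothesis to conclude $G-v$ is $d$-rigid, so $\Cl_d(G-v)=K_{|V|-1}$; (4) in $\Cl_d(G)$, the $d$ neighbours of $v$ form a $d$-clique, so by \cref{lem:extension} (and \cref{lem:extension_in_closure} to absorb any excess neighbours) $G$ is $d$-rigid. I expect step (2) to require the most care, particularly the combinatorial argument that a deletable vertex exists in the largest part without destroying pairwise connectivity; the degenerate sub-case where every part has size one (so $|V|=d$, outside the hypothesis) or where $V_i$ shrinks to a singleton will need a short separate treatment, possibly by merging that argument with the base case.
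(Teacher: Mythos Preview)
The paper does not prove this theorem; it is quoted from \cite{LNPR23+} and used as a black box, so there is no in-paper proof to compare against.

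On its own merits, your inductive plan has a genuine gap precisely where you flag it. Choosing a largest part $V_i$ and seeking $v\in V_i$ that is simultaneously a non-cut vertex of every $G[V_i,V_j]$ need not succeed. For a concrete obstruction with $d=2$, take $V_1=\{a,b,c,d\}$ with $G[V_1]$ the path $a\text{--}b\text{--}c\text{--}d$, take $V_2=\{x,y,z,w\}$, and let $G[V_1,V_2]$ have edge set $\{ax,ay,by,bz,cz,dz,dw\}$. Then the non-cut vertices of $G[V_1]$ lying in $V_1$ are exactly $a$ and $d$, whereas the only non-cut vertex of $G[V_1,V_2]$ lying in $V_1$ is $c$; hence no vertex of $V_1$ is removable, even though $V_1$ is a largest part. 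In this particular example one can remove a vertex from $V_2$ instead, but for general $d$ a vertex $v\in V_i$ must be a non-cut vertex of all $d$ graphs $G[V_i,V_j]$ at once, and you give no argument that \emph{some} part always contains such a vertex. Your fallback of letting a part shrink to empty does not help either: that produces only a strong $(d{-}1)$-rigid partition of $G-v$, from which $d$-rigidity of $G$ cannot be recovered via \cref{lem:extension}.

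For comparison, the argument in \cite{LNPR23+} is not inductive. It builds an explicit embedding $\p:V\to\RR^d$, placing the vertices of each $V_i$ along its own line with the $d$ directions in general position, and then verifies directly that $\R(G,\p)$ has rank $d|V|-\binom{d+1}{2}$; the connectivity of each $G[V_i,V_j]$ is exactly what forces the relevant submatrices to have full rank. This bypasses the removable-vertex question entirely.
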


\subsection{Concentration inequalities}
To conclude the preliminaries,
we state two known concentration inequalities that will be used in this paper.
The first is a standard Chernoff-type bound
(see, e.g.,~\cite{JLR}*{Theorem~2.1}).

\begin{theorem}[Chernoff bound]\label{thm:chernoff}
  Let $n\ge 1$ be an integer and let $p\in[0,1]$,
  let $X\sim\Bin(n,p)$
  and let $\mu=\E{X}=np$.
  Then, for every $\nu>0$,
  \[
    \pr(X\le \mu-\nu)
    \le \exp\left(-\frac{\nu^2}{2\mu}\right).
  \]
\end{theorem}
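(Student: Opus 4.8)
The final statement is the standard Chernoff lower-tail bound for a binomial random variable $X\sim\Bin(n,p)$ with mean $\mu=np$: for every $\nu>0$, $\pr(X\le \mu-\nu)\le \exp(-\nu^2/(2\mu))$. The plan is to give the classical exponential-moment (Bernstein/Markov) argument. First I would write $X=\sum_{i=1}^n X_i$ where the $X_i$ are i.i.d.\ $\Bernoulli(p)$ random variables. For any $t>0$, applying Markov's inequality to the random variable $e^{-tX}$ gives
\[
  \pr(X\le \mu-\nu) = \pr\!\left(e^{-tX}\ge e^{-t(\mu-\nu)}\right)
  \le e^{t(\mu-\nu)}\,\E\!\left[e^{-tX}\right]
  = e^{t(\mu-\nu)}\prod_{i=1}^n \E\!\left[e^{-tX_i}\right],
\]
using independence in the last step.

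\textbf{Bounding the moment generating function.} Next I would compute $\E[e^{-tX_i}] = 1-p+pe^{-t} = 1 + p(e^{-t}-1)$, and use the elementary inequality $1+x\le e^{x}$ to get $\E[e^{-tX_i}]\le \exp\!\left(p(e^{-t}-1)\right)$. Taking the product over $i$ yields $\E[e^{-tX}]\le \exp\!\left(\mu(e^{-t}-1)\right)$, so
\[
  \pr(X\le \mu-\nu)\le \exp\!\left(t(\mu-\nu)+\mu(e^{-t}-1)\right).
\]
Now I would further bound $e^{-t}-1\le -t + t^2/2$, valid for all $t\ge 0$ (from the Taylor series with alternating, decreasing terms, or by a direct convexity check), giving an upper bound of $\exp\!\left(-t\nu + \mu t^2/2\right)$.

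\textbf{Optimising over $t$.} The exponent $-t\nu+\mu t^2/2$ is a quadratic in $t$ minimised at $t^\ast=\nu/\mu>0$, where it takes the value $-\nu^2/(2\mu)$. Substituting $t=t^\ast$ (which is a legitimate choice since $t^\ast>0$) gives exactly $\pr(X\le\mu-\nu)\le\exp\!\left(-\nu^2/(2\mu)\right)$, as claimed. One should note the degenerate cases: if $p=0$ then $\mu=0$ and $\pr(X\le-\nu)=0$ for $\nu>0$, so the bound holds vacuously (and the right-hand side should be read as $0$); if $\mu-\nu<0$ the statement is trivial. Otherwise $\mu>0$ and the division by $\mu$ is fine.

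\textbf{Main obstacle.} There is essentially no obstacle here — this is a textbook computation — and the reference given in the excerpt (\cite{JLR}*{Theorem~2.1}) already contains it, so in the paper one may simply cite it. If a self-contained proof is wanted, the only point requiring a moment's care is justifying the inequality $e^{-t}-1\le -t+t^2/2$ for all $t\ge 0$ (equivalently $\varphi(t):=t^2/2 - t + 1 - e^{-t}\ge 0$, which follows since $\varphi(0)=0$ and $\varphi'(t)=t-1+e^{-t}\ge 0$ because $\varphi'(0)=0$ and $\varphi''(t)=1-e^{-t}\ge 0$), and being slightly careful about the edge cases $\mu=0$ and $\mu-\nu<0$.
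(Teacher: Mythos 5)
Your proof is correct: the paper does not prove this statement at all but simply cites it (it is Theorem~2.1 of the Janson--{\L}uczak--Ruci\'nski reference), and your exponential-moment argument with the bound $e^{-t}-1\le -t+t^{2}/2$ and the optimal choice $t=\nu/\mu$ is the standard derivation of exactly this lower-tail inequality, with the degenerate cases $\mu=0$ and $\mu-\nu<0$ handled appropriately. Nothing further is needed.
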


The second concentration inequality that we will need is the following consequence of Azuma's inequality
(see, e.g.,~\cite{JLR}*{Theorem~2.1}),
which can be easily proved along the lines of \cite{JLR}*{Corollary~2.27}
or \cite{frieze2004perfect}*{Lemma~11}.

For two sets $A,B$, we denote by $A\triangle B$ their symmetric difference.

\begin{lemma}\label{lem:azuma:worepl}
  Let $d>0$ and let $t>0$ be an integer.
  Let $V$ be a set with $|V|\ge t$. Suppose $f:\binom{V}{t}\to\RR$ satisfies the following Lipschitz condition:
  if $|A\triangle B|=2$,  then $|f(A)-f(B)|\le d$.
  Let $X=f(U)$ for a uniformly chosen $U\in\binom{V}{t}$.
  Then,
  \[
    \pr(|X-\E{X}|\ge \nu) \le 2\exp\left(-\frac{\nu^2}{2td^2}\right).
  \]
\end{lemma}

\section{Small $d$}\label{sec:small}
In this section we prove \cref{thm:exact}.
To this end, we begin with a few definitions.
A vertex is called \defn{simplicial} if its neighbourhood induces a clique.
The following lemma is implicit in \cite{Vil23+} (see also \cite{peled2024rigidity} for an explicit statement).
For completeness, we prove it here.
\begin{lemma}\label{lem:simpvx}
  Let $G=(V,E)$ be a $d$-closed graph with minimum degree at least $d(d+1)$.
  Then, $G$ contains a simplicial vertex.
\end{lemma}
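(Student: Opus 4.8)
The plan is to prove \cref{lem:simpvx} by contradiction: assume $G=(V,E)$ is $d$-closed with $\delta(G)\ge d(d+1)$ but has no simplicial vertex, and derive an impossibility. The natural invariant to track is a largest clique $K$ in $G$; since $G$ is $d$-closed, \cref{lem:extension_in_closure} tells us that cliques behave rigidly: if $U\subseteq K$ is a $d$-subset and two vertices outside are both complete to $U$, they are adjacent. The idea is to use this to "grow" a clique whenever we find enough common neighbours, and combine it with the minimum degree hypothesis to keep finding such common neighbours, contradicting maximality of $K$.

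Here is the intended sequence of steps. First I would take $K$ to be a maximum clique of $G$, and set $k=|K|$. Since no vertex is simplicial, in particular no vertex of $K$ is simplicial, so for each $v\in K$ the neighbourhood $N(v)$ does not induce a clique; more usefully, I want to argue that $k\ge d+1$ (if $k\le d$, pick any vertex $v$ of $K$ — actually we need a cleaner start: take a vertex $v$ of minimum degree and look inside $N(v)$, which has $\ge d(d+1)$ vertices; a maximum clique inside $N(v)\cup\{v\}$ that is not all of $N(v)\cup\{v\}$ gives, by non-simpliciality, room to apply the closure extension). The key counting step: let $K$ be a maximum clique with $k\ge d+1$ vertices, fix a $d$-subset $U\subseteq K$, and consider the set $W$ of vertices outside $K$ that are adjacent to all of $U$. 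Every vertex $u\in K\sm U$ lies in $W$'s "defining condition" vacuously — rather, the point is that by \cref{lem:extension_in_closure}, any two vertices of $W\cup(K\sm U)$ that are both complete to $U$ are adjacent; if I can show some vertex $w\in W$ is complete to all of $K\sm U$, then $K\cup\{w\}$ is a larger clique. To force this, I would count edges from $K\sm U$ into $W$: each vertex of $K\sm U$ has degree $\ge d(d+1)$, all its neighbours outside $U$ that are also complete to $U$ land in $W$, and by the closure property $K\sm U$ is itself complete to $W$ — wait, that already says each $w\in W$ is complete to $K\sm U$, hence $K\cup\{w\}$ is a clique, contradiction, provided $W\ne\es$.

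So the crux reduces to showing $W\ne\es$, i.e. that there is at least one vertex outside $K$ adjacent to all $d$ vertices of some well-chosen $U\subseteq K$. This is where the minimum degree bound $\delta(G)\ge d(d+1)$ and a pigeonhole/averaging argument enter, and I expect this to be the main obstacle. The approach: pick a vertex $x\notin K$ (which exists, else $G$ is complete and every vertex is simplicial); $x$ has $\ge d(d+1)$ neighbours, at most $k-1$ of which can be... no — instead, since $K$ is maximum, $x$ is \emph{not} complete to $K$, so $x$ misses some vertex of $K$; but $x$ might still be complete to many $d$-subsets of $K$. Count pairs $(x, U)$ with $x\notin K$, $U\in\binom{K}{d}$, $x$ complete to $U$: if $|N(x)\cap K|=j$ then $x$ contributes $\binom{j}{d}$ such pairs. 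We need the total to exceed $0$, which holds as long as \emph{some} $x\notin K$ has $|N(x)\cap K|\ge d$. If no such $x$ exists, then every vertex outside $K$ has at most $d-1$ neighbours in $K$; now take any $v\in K$ — it has $\ge d(d+1)$ neighbours, at most $k-1$ in $K$, so at least $d(d+1)-(k-1)$ outside, and one shows $k$ must be large, but then looking inside $N(v)$ and iterating (or a direct degree sum over $K$ vs. the at-most-$(d-1)$ bound) yields the contradiction via the inequality $d\le (\sqrt{8n-15}-1)/4$-type constraint — actually here we only have $\delta\ge d(d+1)$, so the honest argument is: if $k\le d(d+1)$ we can embed $N(v)$ (size $\ge d(d+1)\ge k$) and find a clique not contained in a single "simplicial" neighbourhood. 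I would clean this up by a careful induction on $k$ or by directly invoking Vill\'anyi's counting. The delicate point throughout is correctly handling the boundary cases $k\le d$ and ensuring the chosen $d$-subset $U$ has a common neighbour outside $K$; once $W\ne\es$ is secured, the closure property finishes it immediately.
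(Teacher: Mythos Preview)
Your reduction is correct as far as it goes: if $K$ is a maximum clique of size $\ge d+1$ in a $d$-closed graph and some $w\notin K$ is complete to a $d$-subset $U\subseteq K$, then \cref{lem:extension_in_closure} indeed forces $w$ to be adjacent to all of $K\sm U$, contradicting maximality. But the ``crux'' --- showing $W\ne\es$ --- is where the entire difficulty lies, and your attempts do not resolve it. Your degree-counting and pigeonhole sketches make no essential use of the ``no simplicial vertex'' hypothesis, and without that hypothesis the claim $W\ne\es$ is simply false: take $G=K_m\sqcup K_m$ with $m=d(d+1)+1$, which is $d$-closed with $\delta(G)=d(d+1)$, yet every vertex outside a maximum clique has zero neighbours in it. (Here every vertex \emph{is} simplicial, so the lemma holds --- but your counting cannot distinguish this example from your hypothetical non-simplicial world.) You have reduced the lemma to an equivalent statement without proving it; the closing appeal to ``Vill\'anyi's counting'' is a concession that a genuinely different idea is needed.

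The paper's proof supplies that idea and does not go through maximum cliques at all. It first shows, using $d$-closedness, that if $G$ has no simplicial vertex then for every $v$ any two distinct maximal cliques in $N(v)$ meet in at most $d-2$ vertices (otherwise their union together with $v$ would contain a copy of $\OK_{\cdot,\cdot,d}$, which is $d$-rigid and hence complete in the closure, contradicting distinctness of the maximal cliques). It then invokes two black-box lemmas of Vill\'anyi: for any permutation $\sigma$, a certain subgraph $G_\sigma$ of a $d$-closed $G$ has at most $dn-\binom{d+1}{2}$ edges; and under the clique-intersection condition just established together with $\delta(G)\ge d(d+1)$, some permutation $\sigma$ makes $G_\sigma$ have at least $dn$ edges. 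These two bounds are incompatible. This permutation-based edge-counting is the real content of the proof, and nothing in your maximum-clique framework approximates it.
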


For the proof of \cref{lem:simpvx} we will need the following two lemmas from \cite{Vil23+}.
Given a graph $G=([n],E)$ and a permutation $\sigma:[n]\to[n]$,
define the subgraph $G_\sigma$ of $G$ as follows.
For a vertex $u$, let $N_\sigma(u)=N(u)\cap\{v\in[n] :\sigma(v)<\sigma(u)\}$,
and let $\deg_\sigma(u)=|N_\sigma(u)|$.
Now, construct $E_\sigma\subseteq E$ by considering the vertices according to the order induced by $\sigma$.
For each vertex $v$,
\begin{itemize}
  \item If $\deg_\sigma(v)\le d$ then connect $v$ with $N_\sigma(v)$.
  \item If $\deg_\sigma(v)\ge d+1$ and $N_\sigma(v)$ is a clique,
        then connect $v$ with the first $d$ vertices in $N_\sigma(v)$.
  \item If $\deg_\sigma(v)\ge d+1$ and $N_\sigma(v)$ is not a clique,
        then connect $v$ with any $d+1$ vertices in $N_\sigma(v)$
        that do not form a clique in $G$.
\end{itemize}
Finally, define $G_\sigma=(V,E_\sigma)$.

\begin{lemma}[\cite{Vil23+}]\label{lem:Gsigma}
  Let $G=([n],E)$ be a $d$-closed graph and let $\sigma:[n]\to[n]$ be a permutation.
  Then, the graph $G_\sigma$ has at most $dn-\binom{d+1}{2}$ edges.
\end{lemma}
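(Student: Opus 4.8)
The plan is to prove the stronger statement that $G_\sigma$ is \emph{independent} in the generic $d$-dimensional rigidity matroid; that is, for a generic $\p$ the rows of $\R(G_\sigma,\p)$ are linearly independent. Granting this, since $G_\sigma$ has $n\ge d+1$ vertices the matrix $\R(G_\sigma,\p)$ has row rank exactly $|E(G_\sigma)|$, while by the bound quoted in the introduction $\rank(\R(G_\sigma,\p))\le dn-\binom{d+1}{2}$; together these give $|E(G_\sigma)|\le dn-\binom{d+1}{2}$, as desired.

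To establish independence I would reveal $G_\sigma$ one vertex at a time, in the order given by $\sigma$. For $0\le k\le n$, let $H_k$ be the graph on vertex set $V_k=\{\sigma^{-1}(1),\dots,\sigma^{-1}(k)\}$ whose edges are those of $G_\sigma$ that were added while processing the first $k$ vertices; thus $H_0$ is empty, $H_n=G_\sigma$, each $H_k$ is a subgraph of $G[V_k]$, and $H_{k+1}$ is obtained from $H_k$ by adding the single vertex $v:=\sigma^{-1}(k+1)$ together with the edges prescribed by the construction. I claim every $H_k$ is independent, by induction on $k$. For $k\le d+1$ this is automatic, since no circuit of the $d$-dimensional rigidity matroid spans fewer than $d+2$ vertices (the smallest being $K_{d+2}$) and $H_k$ has at most $d+1$ vertices. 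So suppose $k\ge d+1$ and $H_k$ is independent.

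If, when processing $v$, we added at most $d$ edges — this covers the case $\deg_\sigma(v)\le d$ as well as the case $\deg_\sigma(v)\ge d+1$ with $N_\sigma(v)$ a clique in $G$ — then $H_{k+1}$ is obtained from $H_k$ by adding a new vertex joined to at most $d$ distinct vertices of $H_k$, a move that preserves independence, so $H_{k+1}$ is independent. Otherwise we added $d+1$ edges, which happens only when $\deg_\sigma(v)\ge d+1$ and $N_\sigma(v)$ is not a clique in $G$; in that case the construction joins $v$ to vertices $u_1,\dots,u_{d+1}\in N_\sigma(v)$ that do not span a clique in $G$, so we may fix $i\ne j$ with $\{u_i,u_j\}\notin E(G)$. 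Since $G$ is $d$-closed we have $E(\Cl_d(G))=E(G)$, hence $\{u_i,u_j\}\notin E(\Cl_d(G))$; and since $H_k$ is a subgraph of $G$, monotonicity of the $d$-closure gives $\{u_i,u_j\}\notin E(\Cl_d(H_k))$. Consequently $H_k+\{u_i,u_j\}$ is still independent. But $H_{k+1}$ is precisely the graph obtained from $H_k+\{u_i,u_j\}$ by the $1$-extension (edge split) that deletes $\{u_i,u_j\}$ and adds $v$ adjacent to $u_i$, $u_j$ and the remaining $d-1$ vertices among $u_1,\dots,u_{d+1}$; as the $1$-extension preserves independence, $H_{k+1}$ is independent. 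This completes the induction, and taking $k=n$ finishes the proof.

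The heart of the matter — and the only use of the hypothesis that $G$ is $d$-closed — is the $(d+1)$-edge case: adding a vertex of degree $d+1$ to an independent graph generally creates a dependency (witness $K_{d+2}$), but because the $d+1$ neighbours are deliberately chosen to span a non-clique in $G$, and $G$ is its own $d$-closure, the corresponding non-edge persists in $\Cl_d(H_k)$, which is exactly the condition under which the vertex addition can be re-read as an independence-preserving $1$-extension. The remaining ingredients are standard facts about the generic $d$-dimensional rigidity matroid: that adding a vertex of degree at most $d$ preserves independence (a strengthening of \cref{lem:extension}), that the $1$-extension preserves independence, and that the $d$-closure is monotone under passing to subgraphs.
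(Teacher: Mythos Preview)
The paper does not prove this lemma; it is quoted from \cite{Vil23+} without proof. Your argument is correct and is essentially Vill\'anyi's: one shows that $G_\sigma$ is independent in the generic $d$-rigidity matroid by processing the vertices in $\sigma$-order, each step being either a $0$-extension (at most $d$ new edges) or, in the $(d+1)$-edge case, a $1$-extension enabled by the non-edge among the chosen neighbours, which survives in $\Cl_d(H_k)$ precisely because $G$ is $d$-closed and the closure is monotone.
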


\begin{lemma}[\cite{Vil23+}]\label{lem:dpuz}
  Let $G=([n],E)$ be a graph with $\delta(G)\ge d(d+1)$
  in which for every vertex $v$, the neighbourhood of $v$ does not span a clique,
  and every two distinct maximal cliques $H_1,H_2\subseteq N(v)$ intersect in at most $d-2$ vertices.
  Then, there exists a permutation $\sigma:[n]\to[n]$ for which $G_\sigma$ has at least $dn$ edges.
\end{lemma}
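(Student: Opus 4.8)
\textbf{Proof plan for \cref{lem:dpuz}.}

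The plan is to construct the permutation $\sigma$ greedily, processing vertices one at a time, and to track the quantity $\sum_v \min\{\deg_\sigma(v), d+1\}$, which by construction lower-bounds $|E_\sigma|$ up to controllable slack; more precisely, each vertex $v$ with $\deg_\sigma(v)\le d$ contributes $\deg_\sigma(v)$ edges, each vertex with $\deg_\sigma(v)\ge d+1$ and clique neighbourhood contributes $d$, and each vertex with $\deg_\sigma(v)\ge d+1$ and non-clique neighbourhood contributes $d+1$. The hypothesis guarantees that \emph{every} vertex has a non-clique neighbourhood in $G$, so the only way a vertex fails to contribute $d+1$ to $|E_\sigma|$ is if either $\deg_\sigma(v)\le d$ (too few predecessors seen so far) or $N_\sigma(v)$ happens to be a clique even though $N(v)$ is not. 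I would aim to choose $\sigma$ so that both deficient cases are rare: at most $O(d)$ vertices have $\deg_\sigma(v)\le d$ if we process a cleverly chosen ``core'' first, and the clique-restriction case is controlled using the second hypothesis (maximal cliques in $N(v)$ pairwise intersect in at most $d-2$ vertices).

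First I would handle the low-degree vertices. Since $\delta(G)\ge d(d+1)$, a natural idea is to place vertices in an order where, once we are past the first few vertices, every subsequent vertex already has many predecessors among its neighbours; concretely, build the order from the end backwards by repeatedly removing a vertex of minimum degree in the remaining graph — but that can fail to give enough predecessors. A cleaner route: pick the order so that the last $n - c\cdot d$ vertices each see at least $d+1$ neighbours earlier. One can show such an order exists because a graph with minimum degree $d(d+1)$ cannot have too many vertices that are ``early'' — formally, iteratively peel off vertices that currently have fewer than $d+1$ neighbours among the not-yet-peeled set; each peeled vertex had $\ge d(d+1)$ neighbours total but $\le d$ among survivors, so it has $\ge d(d+1)-d$ neighbours among already-peeled vertices, and a counting bound on edges inside the peeled set forces the peeled set to have size $O(d)$. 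Placing the peeled vertices first (in peel order reversed) and the survivors after, every survivor has $\ge d+1$ predecessors.

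Next I would force $N_\sigma(v)$ to be a non-clique for every survivor $v$. Here is where the second hypothesis enters. For a survivor $v$, consider its neighbourhood $N(v)$; since $N(v)$ is not a clique, fix a non-edge $\{x,y\}\subseteq N(v)$. If both $x$ and $y$ precede $v$ in $\sigma$, then $N_\sigma(v)\supseteq\{x,y\}$ is not a clique and $v$ contributes $d+1$. The difficulty is arranging this simultaneously for all survivors. I would do this by a second greedy/probabilistic pass: among the survivors, order them so that for each $v$ at least two of its ``witnessing'' non-adjacent neighbours come earlier. The hypothesis that distinct maximal cliques in $N(v)$ meet in $\le d-2$ vertices ensures $N(v)$ is ``robustly'' non-clique — it cannot be covered by one clique plus $d-2$ extra vertices in a degenerate way — which is exactly what prevents an adversarial early segment from making $N_\sigma(v)$ a clique. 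Quantitatively: if $|N_\sigma(v)|\ge d+1$ but $N_\sigma(v)$ were a clique, then $N_\sigma(v)$ lies inside a single maximal clique $H_1\subseteq N(v)$; picking any other maximal clique $H_2\subseteq N(v)$ (exists since $N(v)$ is not a clique), $|H_1\cap H_2|\le d-2$, so $H_2$ contributes a neighbour of $v$ outside $H_1$, giving leverage to reroute the order. I would make this precise by a careful choice of $\sigma$ restricted to the survivors, perhaps ordering survivors by a random permutation and showing the expected number of survivors with clique $N_\sigma(v)$ is $o(n)$, or $O(d)$, using the intersection bound.

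Finally, assembling the count: with $O(d)$ peeled vertices contributing $\ge 0$, and all but $O(d)$ survivors contributing exactly $d+1$, we get $|E_\sigma|\ge (d+1)(n - O(d)) \ge dn$ for $n$ large relative to $d$ (which is the regime of interest, consistent with $\delta(G)\ge d(d+1)$ forcing $n\ge d(d+1)+1$); one checks the constants close with room to spare. \textbf{The main obstacle} I anticipate is the second pass — simultaneously guaranteeing for \emph{every} survivor $v$ that $N_\sigma(v)$ is not a clique. Controlling one vertex is easy, but the orderings demanded by different vertices can conflict, and making the intersection hypothesis $|H_1\cap H_2|\le d-2$ do real work here (rather than just $|H_1\cap H_2|\le d-1$, which would be the trivial bound from $H_1\ne H_2$) is the delicate point; I expect this is where Vill\'anyi's argument is cleverest and where I would need to be most careful, likely via a potential-function or exchange argument on orderings rather than a naive union bound.
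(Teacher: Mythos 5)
A point of context first: the paper does not prove \cref{lem:dpuz} at all --- it is imported verbatim from Vill\'anyi~\cite{Vil23+} and used as a black box in the proof of \cref{lem:simpvx} --- so your plan has to be judged as an attempted reconstruction of Vill\'anyi's argument, and as it stands it has genuine gaps, not just missing details. The first concrete problem is the peeling step. With $\delta(G)\ge d(d+1)\ge d+1$, at the very first step no vertex has fewer than $d+1$ neighbours among the not-yet-peeled set, so the peeling never removes anything: the peeled set is empty, all vertices are ``survivors'', and the property you actually obtain (each survivor has at least $d+1$ neighbours among the survivors) says nothing about predecessors in the order. Indeed, in \emph{any} ordering the first $d+1$ vertices have $\deg_\sigma(v)\le d$, and one cannot hope for only $O(d)$ deficient vertices in general: even for a uniformly random $\sigma$, a vertex of degree about $d(d+1)$ has probability roughly $1/d$ of seeing at most $d$ earlier neighbours, so the expected number of deficient vertices is of order $n/d$, not $O(d)$. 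The arithmetic at the end does not close either: $(d+1)(n-Cd)\ge dn$ requires $n\ge Cd(d+1)$, while the hypotheses only guarantee $n\ge d(d+1)+1$, so there is no ``room to spare''; the count has to be essentially exact, vertex by vertex, rather than ``all but $O(d)$ vertices contribute $d+1$''.

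The second, and central, problem is the one you yourself flag: simultaneously guaranteeing that $N_\sigma(v)$ is not a clique whenever $\deg_\sigma(v)\ge d+1$ is left entirely open, so the proposal is a plan rather than a proof, and the ``rerouting'' leverage you describe is not an argument. The way the hypothesis $|H_1\cap H_2|\le d-2$ actually does work is different and more local: it implies that every clique of size at least $d-1$ in $N(v)$ is contained in a \emph{unique} maximal clique of $N(v)$. This is exactly what lets one bound, for a uniformly random permutation $\sigma$ and for each vertex $v$ separately, the probability that $\deg_\sigma(v)\ge d+1$ while $N_\sigma(v)$ is a clique, by summing over the maximal cliques of $N(v)$ with no overcounting; combined with the fact that $\deg_\sigma(v)$ is uniform on $\{0,1,\dots,\deg(v)\}$ and $\deg(v)\ge d(d+1)$, one shows that the expected contribution of each vertex to $|E_\sigma|$ is at least $d$, hence $\E|E_\sigma|\ge dn$ and some permutation achieves the bound. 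Linearity of expectation handles all of the ``simultaneity'' that you identify as the main obstacle; no exchange or potential-function argument over orderings is needed. Until you either carry out such an averaging computation or supply a genuinely different mechanism for the second pass, the statement remains unproved by your outline.
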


We are now ready to prove \cref{lem:simpvx}.
\begin{proof}[Proof of \cref{lem:simpvx}]
  Assume, for contradiction, that $G$ has no simplicial vertex.
  Let $v\in V$, and let $H_1,H_2$ be two distinct maximal cliques in $N(v)$.
  In particular, $G[H_1\cup H_2\cup\{v\}]$ is not complete,
  and hence not $d$-rigid.
  Thus, $|(H_1\cup\{v\})\cap (H_2\cup\{v\})|\le d-1$
  (since $\OK_{|H_1|,|H_2|,d}$ is $d$-rigid).
  It follows that $|H_1\cap H_2|\le d-2$.
  Hence, by \cref{lem:dpuz,lem:Gsigma},
  there exists a permutation $\sigma:[n]\to[n]$ for which
  $dn\le|E_\sigma|\le dn-\binom{d+1}{2}$,
  a contradiction.
\end{proof}

We will also use the following simple lemma.

\begin{lemma}\label{lem:largerigid}
  Let $G$ be a $d$-closed graph on $n$ vertices, with minimum degree at least $\delta$. Assume that $G$ has a clique of size at least  $d+(n-1-\delta)$.
  Then, $G$ is a complete graph.
\end{lemma}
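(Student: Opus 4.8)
The plan is to use induction on $n$ together with the $0$-extension-type structure provided by \cref{lem:extension_in_closure}. Let $K$ be a clique in $G$ of size at least $d + (n-1-\delta)$. If $n \le d+1$ then a clique of the claimed size already forces $G$ to be complete (since $\delta \le n-1$ always), so we may assume $n \ge d+2$. Pick a vertex $v \in V \sm K$; such a vertex exists unless $K = V$, in which case $G$ is already complete and we are done. The key observation is a counting bound: $v$ has at least $\delta$ neighbours, and $|V \sm K| \le n - (d + n - 1 - \delta) = \delta + 1 - d$, so $v$ has at most $\delta - d$ neighbours outside $K$, hence at least $d$ neighbours inside $K$. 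Fix a set $U \subseteq N(v) \cap K$ with $|U| = d$; since $U \subseteq K$, $U$ is a clique in $G$.

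Now I would apply \cref{lem:extension_in_closure} repeatedly with this fixed $U$: every vertex $w \in K \sm U$ is adjacent to all of $U$ (as $K$ is a clique), and $v$ is adjacent to all of $U$, so $\{v, w\} \in E$ for every $w \in K \sm U$. Combined with the $d$ edges from $v$ into $U$, this shows $v$ is adjacent to all of $K$, so $K \cup \{v\}$ is a clique of size $|K| + 1$. The subtlety is that we want to conclude more: ideally that $v$ can be absorbed into a still-larger clique, and that this propagates. One clean way to package this: having shown $K' := K \cup \{v\}$ is a clique, we now have a clique of size $|K| + 1 \ge d + (n-1-\delta) + 1 = d + (n - \delta)$; applying the same argument to the next vertex outside $K'$ (which still has $\ge d$ neighbours in $K'$ by the same count, since the bound on $|V \sm K|$ only shrank) lets us iterate. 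Formally this is cleanest as induction on $|V \sm K|$: the base case $|V\sm K| = 0$ is trivial, and the inductive step absorbs one vertex into the clique, reducing $|V \sm K|$ by one while preserving the hypothesis $|K| \ge d + (n-1-\delta)$ (it only gets stronger). After $|V\sm K|$ steps, $K = V$ and $G$ is complete.

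The main obstacle — really the only thing to be careful about — is the counting step: one must check that at every stage the vertex $v$ being absorbed genuinely has at least $d$ neighbours inside the current clique $K$, using $\delta(G) \ge \delta$ and the size bound on the complement of $K$. The inequality $|V \sm K| \le \delta + 1 - d$ together with $\deg_G(v) \ge \delta$ gives $\deg_G(v, K) \ge \delta - (|V\sm K| - 1) \ge \delta - (\delta - d) = d$, which is exactly what is needed; and since $|K|$ only grows along the induction while $n$ is fixed, this bound is maintained throughout. Everything else is a direct invocation of \cref{lem:extension_in_closure} and does not require genericity or any further rigidity theory.
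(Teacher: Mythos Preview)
Your proposal is correct and follows essentially the same approach as the paper. The paper packages the argument as ``take a maximal clique and derive a contradiction'' rather than your explicit induction on $|V\sm K|$, but the content is identical: in both cases one uses the counting bound $\deg_G(v,K)\ge \delta-(|V\sm K|-1)\ge d$ to find a $d$-clique $U\subseteq K\cap N(v)$, and then invokes \cref{lem:extension_in_closure} to conclude $v$ is adjacent to every vertex of $K$. (Your preliminary case $n\le d+1$ is unnecessary --- the absorption argument works uniformly --- and the justification you give there is not quite right, but this does not affect correctness since the main argument covers all cases.)
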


\begin{proof}
  Let $S$ be a maximal clique in $G$. In particular, $|S|\ge d+(n-1-\delta)$.
  Assume for contradiction that $S\ne V$, and let $S'=V\sm S$,
  so $|S'|=n-|S|\le \delta-d+1$.
  Let $v\in S'$, and note that $v$ has at least $\delta-(|S'|-1)\ge d$ neighbours in $S$.
  By \cref{lem:extension_in_closure}, $v$ is adjacent to all the vertices in $S$. That is, $S\cup\{v\}$ is a clique in $G$, in contradiction to the maximality of $S$.
\end{proof}

We proceed with the proof of \cref{thm:exact}.
\begin{proof}[Proof of \cref{thm:exact}]
  First note that by our bound on $d$,
  \[
    d(d+1)
    \le \frac{n}{2}+\frac{1}{8}\left(\sqrt{8n-15}-9\right)
    = \frac{n+d}{2}-1
    \le \delta(G).
  \]
  By \cref{lem:simpvx}, $G'=\Cl_d(G)$ has a simplicial vertex $v$.
  Hence, $S=N(v)\cup\{v\}$ induces a clique of size at least $\delta(G)+1\ge(n+d)/2 \ge d+(n-1-\delta(G))$ in $G'$. Therefore, by \cref{lem:largerigid}, $G'$ is a complete graph. Thus, $G$ is $d$-rigid.
\end{proof}

\section{Large $d$}\label{sec:large}
In this section we prove \cref{thm:approx}.
We do so by considering two separate cases,
according to how ``close'' the graph is to being bipartite.
More formally, we 
say that an $n$-vertex graph $G$ is \defn{$\beta$-far from being bipartite}
if one needs to delete at least $\beta n^2$ edges from $G$ to make it bipartite.
We call it \defn{$\beta$-close to being bipartite} otherwise.
The statement of \cref{thm:approx} follows
from \cref{thm:strong_rigid_partition} and
from \cref{prop:approx:close,prop:approx:far} below.
\begin{proposition}\label{prop:approx:close}
  There exist $\beta>0$, $c>0$, and $n_0\ge 2$ such that the following holds.
  For every $n\ge n_0$ and $1\le d\le cn/{\log^2}n$,
  every $n$-vertex graph $G$ with $\delta(G)\ge (n+2d)/2-1$
  which is $\beta$-close to being bipartite
  admits a strong $d$-rigid partition.
\end{proposition}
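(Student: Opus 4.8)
The plan is to exploit the fact that a graph that is $\beta$-close to bipartite has a vertex partition $V = A \cup B$ with very few edges inside $A$ and inside $B$, and then to build a strong $d$-rigid partition $(V_1,\dots,V_d)$ by distributing $A$ and $B$ across the $d$ parts in a balanced way. First I would fix the optimal bipartition $V = A \cup B$ minimising $e(A) + e(B)$; by the $\beta$-close assumption, $e(A)+e(B) \le \beta n^2$. A standard averaging/local-switching observation gives that in an \emph{optimal} bipartition every vertex has at least as many neighbours on the other side as on its own side, so each vertex $v \in A$ satisfies $\deg(v,B) \ge \tfrac12\deg(v) \ge \tfrac{n+2d}{4} - \tfrac12$, and symmetrically for $v \in B$; in particular $|A|, |B| \ge \tfrac{n}{4} - O(d)$, so both sides are linear in $n$. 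Also, since $\sum_{v\in A}\deg(v,A) = 2e(A) \le 2\beta n^2$, at most, say, $\sqrt{\beta}\, n$ vertices of $A$ have more than $\sqrt{\beta}\, n$ neighbours inside $A$ (and likewise in $B$); call the rest of the vertices "good". Good vertices have almost all their $\ge \tfrac{n+2d}{2}-1$ neighbours on the opposite side.

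Next I would construct the partition. Split $A$ into parts $A_1,\dots,A_d$ and $B$ into $B_1,\dots,B_d$, each of size $\Theta(n/d) = \Omega(\log^2 n / c)$ — large enough that concentration inequalities (Chernoff, \cref{thm:chernoff}, or \cref{lem:azuma:worepl}) can be applied, which is exactly where the constraint $d \le cn/\log^2 n$ is used — and set $V_i = A_i \cup B_i$. I would do this split \emph{randomly}, or alternatively greedily/equitably, ensuring that bad vertices are spread out (at most one or two per part). The two conditions to verify are: (i) for each $i$, $G[V_i] = G[A_i \cup B_i]$ is connected, and (ii) for each $i \ne j$, the bipartite-type graph $G[V_i,V_j]$ is connected. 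For (i): a good vertex $u \in A_i$ has $\ge \tfrac{n+2d}{2}-1 - \sqrt\beta n$ neighbours in $B$, hence, if $\beta$ is small, $u$ has neighbours in \emph{every} $B_j$ — in particular in $B_i$ — with room to spare; so $A_i$ is completely joined to $B_i$ through such neighbours and the few bad vertices attach too (they have degree $\ge n/2$, so they reach some good vertex of $V_i$). This forces $G[V_i]$ to be connected. For (ii): $G[V_i,V_j]$ contains all edges between $A_i$ and $B_j$ and between $B_i$ and $A_j$; by the same neighbour-counting, each good vertex of $A_i$ sends an edge into $B_j$ and each good vertex of $B_i$ sends an edge into $A_j$, and one checks these edges glue $A_i, B_i, A_j, B_j$ into one component (e.g. $A_i \leftrightarrow B_j \leftrightarrow A_i$ connects $A_i$; $A_i\leftrightarrow B_j$ and $B_i \leftrightarrow A_j$ and a single crossing edge — guaranteed since $A_i$ is large and every vertex of $A_j$ has many $B$-neighbours, of which at least one lands in $B_i$ — tie the two halves together). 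Thus $(V_1,\dots,V_d)$ is a strong $d$-rigid partition, and \cref{thm:strong_rigid_partition} finishes it.

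The quantitative heart is a counting argument: I want every part $B_j$ to be hit by the neighbourhood of every good vertex of $A$. A good vertex misses at most $n - 1 - \big(\tfrac{n+2d}{2}-1-\sqrt\beta n\big) = \tfrac{n}{2} - d + \sqrt\beta n$ vertices of $B$; if each $B_j$ has size $\ge |B|/d \cdot (1-o(1)) \ge (\tfrac14 - O(d/n))(n/d)$, then for this miss-set to avoid some $B_j$ entirely we'd need $\tfrac{n}{2} - d + \sqrt\beta n \ge (\tfrac14-o(1))\tfrac{n}{d}$, which fails once $d \ge 3$ or so and $\beta$ is small — but to be safe I would instead only claim that each good $u\in A$ has neighbours in all but at most, say, $2$ of the $B_j$'s, which is more than enough for connectivity of $G[V_i,V_j]$ after a short argument, and avoids a delicate extremal calculation. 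This robustness is why I'd keep the parts of near-equal size and treat the $O(\sqrt\beta n)$ bad vertices separately rather than trying to be tight.

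\textbf{Main obstacle.} The genuinely delicate point is handling the regime where one side of the optimal bipartition is small — close to the minimum $|A| \approx n/2 - d$ — and simultaneously $d$ is as large as $cn/\log^2 n$; then the parts $A_i$ have size only $\approx (n/2 - d)/d$, and if this drops below $\Theta(\log^2 n)$ the concentration/neighbour-covering estimates become lossy. I expect to control this by choosing the constant $c$ small enough that $(n/2-d)/d \ge \tfrac1{2c}\log^2 n \gg \log n$, so that a good vertex, having $\gtrsim n/2$ neighbours on the far side, still meets essentially every part; but verifying that the random (or equitable) split places the $\sqrt\beta n$ bad vertices harmlessly, uniformly over all $\binom{d}{2}$ pairs $(i,j)$, is where a union bound over $d^2 = o(n^2/\log^4 n)$ events combined with \cref{thm:chernoff} has to be carried out carefully — and choosing $\beta$ small relative to $c$ (not just absolutely) is essential there.
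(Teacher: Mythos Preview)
Your overall architecture --- split each side of the near-bipartition into $d$ pieces and set $V_i=A_i\cup B_i$ --- matches the paper's, but there is a genuine gap in your treatment of condition (ii). In $G[V_i,V_j]$ (with $V_i,V_j$ disjoint) the only edges present are those of the four types $A_i$--$A_j$, $A_i$--$B_j$, $B_i$--$A_j$, $B_i$--$B_j$. Your cross-edges assemble at best two pieces, one on $A_i\cup B_j$ and one on $A_j\cup B_i$; to merge them you need an $A_i$--$A_j$ edge or a $B_i$--$B_j$ edge. Your stated justification (``every vertex of $A_j$ has many $B$-neighbours, of which at least one lands in $B_i$'') only produces an $A_j$--$B_i$ edge, which lies entirely inside the second piece and does nothing to join the two.

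This missing step is the actual crux of the proof, and it is not routine. One must produce a $d$-colouring of (say) the larger side $A'$ that is \emph{pseudocomplete} in $G[A']$, i.e.\ $E_G(A_i,A_j)\ne\es$ for all $i<j$. The minimum degree of $G[A']$ is only about $d$ (from $\delta(G)-|B'|\ge d-1$ when $|B'|\le n/2$), so each vertex has merely $\approx d$ internal neighbours; the expected number of $A_i$--$A_j$ edges under a uniform colouring is $\Theta(n/d)$, but with no control on $\Delta(G[A'])$ these edges can be concentrated on a few hubs, and a purely random colouring need not be pseudocomplete with high probability. The paper isolates this as \cref{lem:randomcol}: vertices of degree $\ge 5d\log n$ in $G[A']$ are placed deterministically, one per colour class, and on the remaining bounded-degree graph an expansion estimate (\cref{lem:random_uniform_subset}) shows that a random set of size $\Theta(\log n)$ already has $\Theta(d\log n)$ neighbours, which then meets every other class. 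This two-tier construction is precisely where the hypothesis $d\le cn/\log^2 n$ is consumed. Your proposal never engages with it, and the obstacle you highlight (parts becoming too small) is not the binding one --- in fact both sides of the bipartition are forced to have size $(1/2-O(\beta))n$.
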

\begin{proposition}\label{prop:approx:far}
  For every $\beta>0$
  there exist $c>0$ and $n_0\ge 2$ such that the following holds.
  For every $n\ge n_0$ and $1\le d\le cn/\log{n}$,
  every $n$-vertex graph $G$ with $\delta(G)\ge (n+2d)/2-1$
  which is $\beta$-far from being bipartite
  is $d$-rigid.
\end{proposition}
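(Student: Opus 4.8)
The plan is to combine the regularity method with a clique‑growing argument. First, I would use Szemer\'edi's regularity lemma together with the far‑from‑bipartite hypothesis to produce a $d$‑rigid subgraph of $G$ on linearly many vertices; since a $d$‑rigid subgraph on a vertex set $W$ forces $W$ to be a clique of the $d$‑closure $G':=\Cl_d(G)$, I would then greedily enlarge this clique using the minimum degree condition until it has size at least $d+(n-1)-\delta(G')$, at which point \cref{lem:largerigid} shows that $G'$ is complete, i.e.\ that $G$ is $d$‑rigid.

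For the first part, apply the regularity lemma to $G$ with density parameter $\rho$ small in terms of $\beta$ and regularity parameter $\varepsilon<\rho/4$. Since $\delta(G)\ge(n+2d)/2-1=(\tfrac12-o(1))n$ and $G$ is $\beta$‑far from bipartite, the reduced graph $R$ has minimum degree greater than $\tfrac25|R|$ and is not bipartite (otherwise $G$ would be $\beta$‑close to bipartite), so by the Andr\'asfai--Erd\H{o}s--S\'os theorem $R$ contains a triangle; let $A,B,C$ be the corresponding clusters, of size $N=\Theta(n)$ and pairwise $\varepsilon$‑regular with density at least $\rho$. Remove from each cluster the at most $2\varepsilon N$ vertices whose degree into one of the other two clusters is below $(\rho-\varepsilon)N$; the surviving sets $A',B',C'$ still form regular pairs, and — crucially, because $\varepsilon\ll\rho$ — within each of them, say $(A',B')$, every vertex of $A'$ has $\Omega(N)$ common neighbours in $B'$ with all but $o(N)$ of the other vertices of $A'$, and symmetrically. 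Now partition each of $A',B',C'$ uniformly at random into $d$ parts and let $V_i$ be the union of the $i$‑th parts. Because $d\le cn/\log n$, every part has size $\Theta(n/d)=\Omega(\log n)$, so by \cref{thm:chernoff} and a union bound (this is where $c$ is taken small in terms of $\beta$) one may fix the random partition so that all parts are balanced, no vertex is isolated in any $G[A_i,B_j]$ (or in the analogous subgraphs for the other ordered pairs of clusters), and any two vertices of $A'$ with $\Omega(N)$ common neighbours in $B'$ share a neighbour in every part of $B'$ (and likewise for the other ordered pairs). These facts, combined with the elementary observation that a bipartite graph without isolated vertices is connected if and only if its common‑neighbour graph is connected — and here that common‑neighbour graph has minimum degree $(1-o(1))$ times the part size, hence more than half of it — imply that each such $G[A_i,B_j]$ is connected. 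Since $G[V_i]$ contains a blow‑up of a triangle (on the three pieces of $V_i$) with connected bipartite pieces, and $G[V_i,V_j]$ for $i\ne j$ contains a blow‑up of a $6$‑cycle (on the six pieces of $V_i\cup V_j$) with connected bipartite pieces, $(V_1,\dots,V_d)$ is a strong $d$‑rigid partition of $G[A'\cup B'\cup C']$; by \cref{thm:strong_rigid_partition} this induced subgraph, on $\Theta(n)$ vertices, is $d$‑rigid.

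For the second part, write $W=A'\cup B'\cup C'$; then $G'=\Cl_d(G)$ is $d$‑closed, $\delta(G')\ge\delta(G)\ge(n+2d)/2-1$, and $W$ is a clique of $G'$ with $|W|=\Theta(n)>2d$. The key claim is that every clique $K$ of $G'$ with $2d\le|K|\le n/2$ can be enlarged. First, if $v\notin K$ has at least $d$ neighbours of $G'$ in $K$, then, choosing $U\subseteq N_{G'}(v)\cap K$ with $|U|=d$ and applying \cref{lem:extension_in_closure} to the $d$‑closed graph $G'$, the $d$‑clique $U$, and the pairs $v,w$ with $w\in K\sm U$ (all adjacent to every vertex of $U$), one gets that $v$ is adjacent to all of $K$, so $K\cup\{v\}$ is again a clique. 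Second, such a $v$ exists: otherwise every vertex outside $K$ has at most $d-1$ neighbours in $K$, and counting the edges of $G'$ between $K$ and $V\sm K$ from both sides gives
\[
  |K|\Bigl(\tfrac{n}{2}+d-|K|\Bigr)\le|K|\bigl(\delta(G')-|K|+1\bigr)\le|E_{G'}(K,V\sm K)|\le(n-|K|)(d-1)<(n-|K|)d,
\]
which rearranges to $|K|^2-\bigl(\tfrac{n}{2}+2d\bigr)|K|+nd>0$; but the discriminant of $x^2-\bigl(\tfrac{n}{2}+2d\bigr)x+nd$ equals $\bigl(\tfrac{n}{2}-2d\bigr)^2$, so its roots are exactly $2d$ and $\tfrac{n}{2}$, contradicting $2d\le|K|\le\tfrac{n}{2}$. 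Iterating the claim from $W$ (and noting $d+(n-1)-\delta(G')\le\lfloor n/2\rfloor$ by the hypothesis on $\delta(G)$), we obtain a clique of $G'$ of size $d+(n-1)-\delta(G')$; now \cref{lem:largerigid} shows $G'$ is complete, hence $G$ is $d$‑rigid.

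I expect the main obstacle to be the first part — extracting a linear $d$‑rigid subgraph from the regularity structure — and in particular the verification that all the connectivity conditions of the strong $d$‑rigid partition hold simultaneously for a single random partition. The non‑routine ingredients there are the reduction of connectivity of a bipartite piece to connectivity of its common‑neighbour graph, the fact that cleaning a regular pair with $\varepsilon\ll\rho$ makes that common‑neighbour graph very dense, and the need for the random parts to have size $\Omega(\log n)$ so that the relevant concentration estimates survive a union bound; this last point is the source of the restriction $d=O(n/\log n)$.
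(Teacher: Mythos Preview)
Your proposal is correct and follows the same two-stage strategy as the paper: use the regularity lemma to locate a triangle in the reduced graph and build a strong $d$-rigid partition on the resulting super-regular triple via a random $d$-colouring, then extend the linear-size rigid subgraph to all of $G$ using the minimum-degree edge count. The implementations differ only in interchangeable sub-lemmas --- you invoke Andr\'asfai--Erd\H{o}s--S\'os on the minimum degree of $R$ where the paper uses Erd\H{o}s--Simonovits stability on its edge count (\cref{lem:far:triangle}), you argue connectivity of $G[A_i,B_j]$ via its common-neighbour graph where the paper builds length-three paths directly (\cref{prop:triangle}), and you phrase the extension through the $d$-closure and \cref{lem:largerigid} where the paper uses repeated $0$-extension via \cref{lem:comb,lem:extension} --- but the underlying arguments are equivalent.
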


Let us highlight that \cref{prop:approx:far}
applies for a wider range of $d$, compared with \cref{prop:approx:close}. On the other hand, note that \cref{prop:approx:close} has a somewhat stronger conclusion (the existence of a strong $d$-rigid partition, which implies the $d$-rigidity of the graph, by \cref{thm:strong_rigid_partition}).

\subsection{Close to bipartite}
In this section we prove \cref{prop:approx:close}.
We will need the following key lemma.

\begin{lemma}\label{lem:randomcol}
  There exists $c>0$ for which the following holds.
  Let $G=(V,E)$ be an $n$-vertex graph with $\delta(G)\ge d$ for $1\le d\le cn/{\log^2}{n}$.
  Then, there exists a distribution over $(d+1)$-colourings $V_1,\dots,V_{d+1}$ of $V$
  that satisfies the following properties:
  \begin{itemize}
    \item There exists a set $V'\subseteq V$
      with $|V'|\ge |V|-d$
      such that for every $v\in V'$ and $i\in[d]$,
      $\pr(v\in V_i)\ge 1/(2d)$,
      and the events $v\in V_i$ are mutually independent for distinct $v\in V'$;
    \item With high probability, $E(V_i,V_j)\ne\es$ for every $1\le i<j\le d+1$
      (namely, the colouring is \whp{} pseudocomplete).
  \end{itemize}
\end{lemma}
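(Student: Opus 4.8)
The plan is to realise the colouring as an \emph{anchored random colouring}: guided by the structure of $G$, we fix a set $W\subseteq V$ with $|W|\le d$ together with a deterministic colouring of $W$, and colour each vertex of $V':=V\sm W$ independently, giving every colour $i\in[d]$ probability $1/(2d)$ and colour $d+1$ probability $1/2$. With this recipe the first bullet of the lemma holds by construction, so everything reduces to choosing $W$ (and hence $V'$) so that the colouring is \whp{} pseudocomplete. Since there are only $\binom{d+1}{2}<n^2$ pairs, it suffices to prove $\pr(E(V_i,V_j)=\es)=o(n^{-2})$ for each fixed pair $\{i,j\}$. For this I would expose the colour class $V_i$ first; conditioned on $V_i$, each vertex of $V'\sm V_i$ still lands in $V_j$ independently with probability at least $1/(2d)$ (for $j\ne i$), so
\[
  \pr(E(V_i,V_j)=\es\mid V_i)\;\le\;\pr(V_j\cap N(V_i)=\es\mid V_i)\;\le\;\Bigl(1-\tfrac1{2d}\Bigr)^{|N(V_i)\cap V'|}.
\]
Hence it is enough to show that, uniformly in $i$ and with failure probability $o(n^{-2})$, the random colour class $V_i$ \emph{expands}, say $|N(V_i)\cap V'|\ge 10\,d\log n$ (the pair involving colour $d+1$ is then immediate, as non-$W$ vertices take colour $d+1$ with probability $1/2$). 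Sizes such as $|V_i|$ and $|N(V_i)\cap V'|$ appearing along the way will be controlled by \cref{thm:chernoff,lem:azuma:worepl} in the routine steps.

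The first, easy regime is when $G$ contains a large \emph{spread-out} independent set: an independent set $Z$ with $|Z|\ge 20\,d\log n$ whose vertices have pairwise disjoint neighbourhoods. Take $W=\es$ and colour all of $V$. For a fixed pair $\{i,j\}$ the events $A_z=\{c(z)=i\text{ and some neighbour of }z\text{ gets colour }j\}$, $z\in Z$, are mutually independent --- their defining vertex sets $\{z\}\cup N(z)$ are pairwise disjoint because $Z$ is independent and neighbourhood-disjoint --- and each has probability at least $\tfrac1{2d}\bigl(1-(1-\tfrac1{2d})^{\deg(z)}\bigr)\ge\tfrac1{2d}(1-e^{-1/2})>\tfrac1{6d}$, using $\deg(z)\ge d$. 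Since $A_z$ already forces an $(i,j)$-bichromatic edge, $\pr(E(V_i,V_j)=\es)\le(1-\tfrac1{6d})^{|Z|}\le n^{-3}$, which is more than enough. Note this regime is available only when $d=O(\sqrt{n/\log n})$, since the $|Z|$ disjoint neighbourhoods have total size at most $n$.

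In the complementary regime --- no spread-out independent set --- the obstruction to a random colour class expanding is a moderate set $U$, $|U|=O(d\log n)$, off which a near-linear independent set hangs, exactly as in $K_{d,n-d}$ and the $\OK_{m,n,d}$'s of the introduction; this is where the anchor set does the work. When such an obstruction can be taken of size at most $d$ --- in effect, when $G$ has $d$ vertices $r_1,\dots,r_d$ each adjacent to all but $O(d\log n)$ of $V$ --- put $W=\{r_1,\dots,r_d\}$, colour $r_k\mapsto k$, and colour $V'$ at random. For a pair $\{i,j\}$ with $i,j\in[d]$, colour $j$ appears among the $|N(r_i)\cap V'|\ge 8\,d\log n$ non-anchor neighbours of $r_i$ with probability at least $1-n^{-4}$, yielding an edge $\{r_i,\cdot\}$ between $V_i$ and $V_j$; pairs involving colour $d+1$, and non-emptiness of $V_{d+1}$, are handled the same way. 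The remaining case is when $G$ is dense but has no such collection of near-dominating hubs (for instance, $G$ is close to regular with all degrees $\Theta(d)$); there one must return to a uniformly random colouring and prove that its colour classes expand, using that in the absence of a spread-out independent set and of a large clique (a $K_{d+1}$ would give pseudocompleteness outright) the neighbourhoods of the vertices inside a random colour class cannot all be squeezed into a common $O(d\log n)$-set.

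The crux is precisely this last case, which has to carry the entire range $d\le cn/\log^2 n$. Two features make it delicate. First, no argument that localises to vertex-disjoint pieces can reach it: each piece consumes at least two vertices while contributing only $O(1/d^2)$ to the probability of catching a fixed pair, so any such union bound is capped at $d=O(\sqrt{n/\log n})$; getting to $d=\Theta(n/\log^2 n)$ --- where the colour classes have size only $\Theta(\log^2 n)$ and essentially every vertex already sees a constant fraction of all colours --- must exploit the global edge distribution of $G$. Second, the relevant large-deviation event, ``$|N(V_i)\cap V'|$ is small'', is \emph{not} amenable to Azuma/McDiarmid, whose Lipschitz constant here is the maximum degree and far too large; instead one exposes $V_i$ and argues directly that ``$|V_i\cup N(V_i)|$ small'' forces $V_i$ into a small, inwardly closed region, which --- once the structural obstruction above has been ruled out --- is too unlikely for a random colour class. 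Pinning down the structural dichotomy so that it is exhaustive while never spending more than $d$ vertices on $W$, and making the expansion estimate quantitatively sufficient, is the technical heart of the argument.
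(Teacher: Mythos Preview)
Your framework --- anchor at most $d$ vertices deterministically, colour the rest independently, and reduce pseudocompleteness to an expansion bound on each $V_i$ --- is right and matches the paper. But the proof is incomplete: your ``crux'' Case~3 is left as a programme (``forces $V_i$ into a small, inwardly closed region''), and you explicitly defer ``pinning down the structural dichotomy'' and ``making the expansion estimate quantitatively sufficient'', which is the entire content of the lemma. Your case split is also more convoluted than needed: Case~2 asks for $d$ near-dominating vertices (degree $n-O(d\log n)$), far stronger than necessary, pushing almost every graph into the unresolved Case~3; and the spread-out-independent-set regime is a detour the paper never takes.

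The paper's dichotomy is simply on $L=\{v:\deg(v)\ge 5d\log n\}$. Anchor $\ell=\min\{|L|,d\}$ vertices of $L$ as $v_i\mapsto i$; since $|N(v_i)\cap V'|\ge 4d\log n$, every colour hits $N(v_i)$ with probability $1-o(n^{-2})$, disposing of $\mathcal{B}_i$ for $i\le\ell$. If $\ell<d$ then $V'=V\sm L$, so $G'=G[V']$ has $\Delta(G')<5d\log n$ and $\delta(G')\ge d':=d-\ell$. Here Azuma \emph{can} be made to work, via two moves you did not consider. First, sparsify: let each vertex retain exactly $d'$ out-edges to neighbours, forming a digraph $D$; then $A\mapsto|N_D(A)|$ is $(d'+1)$-Lipschitz under single-vertex swaps regardless of $\Delta$, and $N_D(A)\subseteq N_{G'}(A)$. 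Second, apply it not to $V_i$ itself but to a uniform $K$-subset $U$ of $V'$ with $K=\Theta(\log n)$: a second-moment calculation (using $\Delta(G')\le n/(4K)$ to bound neighbourhood overlaps) gives $\E|N_D(U)|\ge d'K/2$, and \cref{lem:azuma:worepl} then yields $|N_{G'}(U)|\ge d'K/3$ with probability $1-o(n^{-2})$. Couple the sampling so that $U\subseteq V_i$ and $V_j$ is drawn uniformly from $V'\sm U$; since $|V_j|=\Omega(n/d')$, it meets $N_{G'}(U)$ with probability $1-\exp(-\Omega(K))=1-o(n^{-2})$.
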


We remind the reader that, here and later, a colouring needs not be ``proper''.
Note that the colouring $V_1,\ldots,V_{d+1}$ guaranteed by \cref{lem:randomcol}
is (with high probability) a pseudocomplete colouring of $G$.
Therefore, \cref{thm:pseudo} follows directly from \cref{lem:randomcol}.
For the proof of \cref{prop:approx:close},
we will make use of the 
fact that \cref{lem:randomcol} allows for a substantial amount of randomness in the creation of the pseudocomplete colouring $V_1,\ldots,V_{d+1}$ of $G$.
 
We proceed to prove \cref{lem:randomcol}. We will need the following result
about expansion of random sets in graphs with relatively comparable degrees.

\begin{lemma}\label{lem:random_uniform_subset}
    Let $n,K,d\ge 1$ be integers satisfying $d\le n/(4K)$,
    and let $G=(V,E)$ be an $n$-vertex graph with $d\le \delta(G)\le \Delta(G)\le n/(4K)$.
    Then,
    if $U$ is a uniformly chosen random subset of $V$ of size $K$,
    the probability that $|N_G(U)|\ge d K/3$
    is at least $1-2\exp(-K/288)$.
\end{lemma}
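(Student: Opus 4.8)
The plan is to bound $\E[\,|N_G(U)|\,]$ from below and then invoke the martingale concentration inequality of \cref{lem:azuma:worepl}. Since $2\exp(-K/288)\ge 1$ whenever $K\le 199$, the assertion is vacuous in that range, so we may assume $K\ge 200$.

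For the first moment, I would start from the identity
\[
  \pr\big(w\in N_G(U)\big)=\frac{\binom{n-1}{K}-\binom{n-1-\deg_G(w)}{K}}{\binom{n}{K}},
  \qquad w\in V,
\]
and combine it with $\deg_G(w)\ge d$, the bound $dK\le n/4$ (which also gives $K\le n/4$), the estimate $\binom{n-1-d}{K}/\binom{n-1}{K}=\prod_{i=0}^{K-1}\bigl(1-\tfrac{d}{n-1-i}\bigr)\le e^{-dK/n}$, and $1-e^{-x}\ge\tfrac78 x$ for $0\le x\le\tfrac14$. This yields $\pr(w\in N_G(U))\ge\tfrac34\cdot\tfrac78\cdot\tfrac{dK}{n}=\tfrac{21}{32}\cdot\tfrac{dK}{n}$ for every $w$, hence $\E[\,|N_G(U)|\,]\ge\tfrac{21}{32}dK$. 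The same computation, carried out for a uniformly random subset of a prescribed set $S\subseteq V$, will be used in conditional form below.

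The crux is the concentration. The direct route — applying \cref{lem:azuma:worepl} to $f(U)=|N_G(U)|$ with $t=K$ — is inadequate, because when $|A\triangle B|=2$ one can only guarantee $|f(A)-f(B)|\le\Delta(G)+1$, which is useless if $\Delta(G)\gg d$. To get around this I would first expose the intersection of $U$ with the high-degree vertices. Fix a threshold $T$ (to be chosen of order $d$), put $H=\{v:\deg_G(v)>T\}$ and $L=V\setminus H$, and condition on $U_H:=U\cap H$; then $U_L:=U\setminus H$ is uniform in $\binom{L}{\,K-|U_H|\,}$, and with $U_H$ frozen the function $|N_G(\cdot)|$ changes by at most $2T+2$ under a single transposition inside $L$ (swapping two vertices of degree $\le T$ perturbs the neighbourhood by at most $2T$ vertices, and toggles membership of the two swapped vertices). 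Hence \cref{lem:azuma:worepl} applies with Lipschitz constant $2T+2$ and $t=K-|U_H|\le K$, giving, for each value of $U_H$,
\[
  \pr\!\left(\,|N_G(U)|\le \E\big[\,|N_G(U)|\,\big|\,U_H\big]-\tfrac{(2T+2)K}{12}\ \Big|\ U_H\right)\ \le\ 2\exp(-K/288).
\]
It then remains to show that $\E[\,|N_G(U)|\mid U_H\,]\ge dK/3+(2T+2)K/12$ with probability $1-e^{-\Omega(K)}$ over $U_H$. If $U_H$ contains a vertex $v$ with $\deg_G(v)\ge dK/3+K$ this is automatic, since then $|N_G(U)|\ge|N(v)\setminus U|\ge\deg_G(v)-(K-1)>dK/3$. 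Otherwise one bounds $\E[\,|N_G(U)|\mid U_H\,]$ below by $\E[\,|N_G(U_L)\setminus U_H|\mid U_H\,]$ and re-runs the first-moment computation over $U_L$; here the point is that $\sum_{w}\deg_G(w,L)=\sum_{v\in L}\deg_G(v)\ge|L|d$ while each summand $\deg_G(w,L)\le\Delta(G)\le n/(4K)$, so no ``saturation'' occurs and the conditional first moment is again of order $dK$. Two regimes should be separated: when $|H|$ is a small fraction of $n$, a concentration bound (e.g.\ \cref{thm:chernoff}, or \cref{lem:azuma:worepl} applied to $U\mapsto|U\cap H|$) gives $|U_H|\le K/10$ with probability $1-e^{-\Omega(K)}$, so $U_L$ is almost all of $U$ and $|L|\ge 3n/4$; when $|H|$ is a constant fraction of $n$, the inequality $2|E(G)|\ge|H|\cdot T$ forces a large average degree, which inflates the (unconditional) first moment enough to absorb both $dK/3$ and the then-possibly-larger loss coming from $|U_H|$.

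The main obstacle is choosing $T$ so that all of these estimates close at once: $T$ must be small enough that $2T+2$ lies comfortably below the slack between $\E[\,|N_G(U)|\mid U_H\,]\asymp dK$ and $dK/3$ imposed by \cref{lem:azuma:worepl}, yet the split into ``$|H|$ small'' and ``$|H|$ large'' — together with the deterministic shortcut when $U_H$ meets a vertex of degree $\ge dK/3+K$ — must cover every graph. Tracking how the constant $\tfrac13$ in the conclusion and the factor $\tfrac14$ in the hypothesis $\Delta(G)\le n/(4K)$ propagate through this bookkeeping, and treating small $d$ (where the slack is tightest) by hand if needed, is where the work concentrates; once $T$ is fixed, each individual estimate is routine.
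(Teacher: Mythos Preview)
Your diagnosis of the difficulty is exactly right --- the Lipschitz constant of $U\mapsto |N_G(U)|$ is governed by $\Delta(G)$, which can be much larger than $d$ --- but the paper sidesteps this with a single clean trick that you missed, and thereby avoids the entire conditioning-and-case-analysis program you outline.

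The paper does not apply \cref{lem:azuma:worepl} to $|N_G(U)|$ at all. Instead, since $\delta(G)\ge d$, for each vertex $v$ it selects an arbitrary set $F_v$ of exactly $d$ edges incident to $v$, orients them out of $v$, and forms the digraph $D$ with edge set $\bigcup_v F_v$. Then $N_D(U)\subseteq N_G(U)$, so it suffices to show $|N_D(U)|\ge dK/3$ with the stated probability. The point is that in $D$ every vertex has out-degree exactly $d$, so swapping one vertex of $U$ for another changes $|N_D(U)|$ by at most $d+1$; \cref{lem:azuma:worepl} applies directly with Lipschitz constant $d+1$ and $t=K$, no conditioning needed. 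The first-moment bound $\E[|N_D(U)|]\ge dK/2$ follows from a short inclusion--exclusion over directed edges and ``collision'' triples $(u_1,u_2,v)$ with $(u_1,v),(u_2,v)\in F$; the hypothesis $\Delta(G)\le n/(4K)$ is used only here, to bound the number of such triples by $\Delta(G)\cdot dn\le dn^2/(4K)$.

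So the comparison is: you attack the Lipschitz problem by \emph{splitting the random set} according to vertex degrees, whereas the paper attacks it by \emph{thinning the graph} to a sub(di)graph with uniformly bounded out-degree. The latter costs nothing in the first moment (you only need $\delta(G)\ge d$ to define $D$) and reduces the whole proof to one Azuma application. Your route is plausible, but as you yourself note, closing the constants --- choosing $T$, handling the two regimes for $|H|$, and controlling the conditional first moment when neighbourhoods of low-degree vertices may lie mostly in $H$ --- is genuine work that you have not carried out; the sparse-subgraph trick makes all of it unnecessary.
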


\begin{proof}
  Write $\Delta=\Delta(G)$.
  For every $v\in V$ let $F_v$ be an arbitrary set of $d$ edges incident to $v$,
  and consider each such edge as directed from $v$.
  Let $D$ be the digraph on $V$ whose edge set is $F=\bigcup_v F_v$.
  Denote by $T$ the set of triples $(u_1,u_2,v)$ for which $u_1\ne u_2$ and $(u_1,v),(u_2,v)\in F$.
  Note that $|F|=dn$
  and that $|T|\le \Delta dn\le dn^2/(4K)$.
  For $A\subseteq V$, let $N_D(A)$ be the set of out-neighbours of $A$ in $D$,
  namely, the set of vertices $v\notin A$ such that there exists $u\in A$ for which $(u,v)\in F$.  
  Note that for every $A\subseteq V$, $N_D(A)\subseteq N_G(A)$.

  Let $f: \binom{V}{K}\to \RR$ be defined by $f(A)=|N_D(A)|$.
  Note that $|f(A)-f(B)|\le d+1$ whenever $|A\triangle B|=2$
  (since adding a new vertex to a set $A$ may increase $|N_D(A)|$ by at most $d$,
  or decrease it by at most $1$;
  similarly,
  removing a vertex from $A$ may increase $|N_D(A)|$ by at most $1$,
  or decrease it by at most $d$). 
  Let $U$ be a uniformly chosen random subset of $V$ of size $K$, and let $X=f(U)=|N_D(U)|$.
  
  Note that the events $u\in U$ are negatively correlated for distinct $u\in V$.
  Note also that $K/n\le 1/4$.
  Thus,
  \[\begin{aligned}
    \E{X}
    &= \E{\left[|N_D(U)|\right]}
    = \sum_v \pr(v\in N_D(U))\\
    &\ge \sum_v \left(
        \sum_{\substack{u\in V:\\(u,v)\in F}} \pr(u\in U,\ v\notin U)
        -\sum_{\substack{u_1,u_2\in V:\\(u_1,u_2,v)\in T}} \pr(u_1,u_2\in U,\ v\notin U)
      \right)\\
    &\ge \sum_{(u,v)\in F} \pr(u\in U,\ v\notin U)
         -\sum_{(u_1,u_2,v)\in T} \pr(u_1,u_2\in U)\\
    &\ge |F|\cdot \frac{K}{n}\left(1-\frac{K}{n}\right)
         -|T|\cdot \frac{K^2}{n^2}
      \ge dK(1-1/4)-dK/4 = dK/2.
  \end{aligned}\]
  Thus, by \cref{lem:azuma:worepl},
  \[\begin{aligned}
    \pr(|N_G(U)|\le d K/3)
    \le \pr(|N_D(U)|\le d K/3)
    &= \pr(X\le dK/3)
    \le\pr(X\le\E{X}-dK/6)\\
    &\le 2\exp\left(\frac{-d^2K^2}{72 K(d+1)^2}\right)
    \le 2\exp(-K/288),
  \end{aligned}\]
  as required.
\end{proof}

\begin{proof}[Proof of \cref{lem:randomcol}]
  Throughout the proof we may (and will) assume that $n$ is sufficiently large.
  Let $c>0$ to be sufficiently small constant (to be chosen later), and let $1\le d\le cn/{\log^2}{n}$.
  Let 
  \[
    L = \{v\in V:\, \deg(v)\ge 5 d \log{n}\},
    \qquad
    \ell=\min\{|L|,d\},
    \qquad
    d'=d-\ell,
  \]
  and let $L'=\{v_1,\dots,v_\ell\}$ be a set of $\ell$ arbitrary elements from $L$.
  Define a probability vector $\vect{q}=(q_1,\dots,q_{d+1})\in\RR^{d+1}$ as follows:
  if $\ell=0$ or $\ell= d$, then $q_i=1/(d+1)$ for all $1\le i\le d+1$;
  otherwise, $q_i=1/(2\ell)$ for $1\le i\le \ell$
  and $q_i=1/(2(d+1-\ell))$ for $\ell<i\le d+1$.
  Note that $q_i\ge 0$ for all $1\le i\le d+1$,
  and $\sum_{i=1}^{d+1}q_i=1$, so $\vect{q}$ is indeed a probability vector.
  We will need the following simple facts about $\vect{q}$.

  \begin{claim}\label{claim:vector_q} The vector $\vect{q}$ satisfies the following properties.
    \begin{enumerate}
      \item For all $1\le i\le d+1$, $q_i\ge 1/(2d)$.
      \item If $\ell<d$, then for all $\ell<i\le d+1$, $q_i\ge 1/(4d')$.
    \end{enumerate}
  \end{claim}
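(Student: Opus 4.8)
The plan is to prove both parts by a direct case analysis on the value of $\ell$, following exactly the three cases used in the definition of $\vect{q}$. The only facts I expect to need are that $d\ge 1$ and, for the second part, that $d'=d-\ell\ge 1$ whenever $\ell<d$; everything else is elementary manipulation of the reciprocals.

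For part (1), I would first dispose of the case $\ell=0$ or $\ell=d$, where $q_i=1/(d+1)$ for every $i$, so that $q_i\ge 1/(2d)$ is just the inequality $d+1\le 2d$, valid since $d\ge1$. In the remaining case $0<\ell<d$ I would split the index set: for $1\le i\le\ell$ we have $q_i=1/(2\ell)\ge 1/(2d)$ because $\ell\le d$; for $\ell<i\le d+1$ we have $q_i=1/(2(d+1-\ell))$, and since $\ell\ge1$ gives $d+1-\ell\le d$, again $q_i\ge 1/(2d)$.

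For part (2), I would assume $\ell<d$, so $d'\ge1$, and again split according to whether $\ell=0$. If $\ell=0$ then $d'=d$ and the relevant $q_i$ equal $1/(d+1)$, and $1/(d+1)\ge 1/(4d)$ reduces to $3d\ge1$. If $0<\ell<d$, then for $\ell<i\le d+1$ we have $q_i=1/(2(d+1-\ell))=1/(2(d'+1))$, and $1/(2(d'+1))\ge 1/(4d')$ is equivalent to $2d'\ge2$, i.e.\ $d'\ge1$, which holds by assumption.

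There is no real obstacle here; the only thing to be careful about is correctly matching the index ranges $1\le i\le\ell$ and $\ell<i\le d+1$ to the piecewise definition of $\vect{q}$, and remembering that the hypotheses $d\ge1$ and (in part (2)) $d'\ge1$ are precisely what make each of these one-line inequalities go through.
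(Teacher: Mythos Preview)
Your proposal is correct and follows essentially the same approach as the paper: a direct case analysis on whether $\ell\in\{0,d\}$ or $0<\ell<d$, with the same index split in the latter case. The only cosmetic difference is that you spell out the final arithmetic verifications (e.g., $3d\ge1$ and $d'\ge1$) where the paper simply asserts the inequality.
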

  \begin{proof}
    If $\ell=0$ or $\ell= d$, then
    $q_i=1/(d+1)\ge 1/(2d)$.
    Assume $1\le\ell<d$. Then, if $1\leq i\leq \ell$, 
    $q_i=1/(2\ell)>1/(2d)$,
    and if $\ell<i\leq d+1$,
    $q_i=1/(2(d+1-\ell))\ge 1/(2d)$.

    Now, assume $\ell<d$ and let $\ell<i\le d+1$.
    If $\ell=0$, then $q_i=1/(d+1)=1/(d'+1)$.
    If $0<\ell<d$, then $q_i=1/(2(d'+1))$.
    In both cases, $q_i \ge 1/(4d')$.
  \end{proof}

  Let $V'=V\sm L'$.
  Assign colours to the vertices of $V'$ independently, according to $\mathbf{q}$
  (that is, we have $\pr(v\in V_i)=q_i$ for all $v\in V'$ and $1\le i\le d+1$).
  In addition, assign $v_i\in V_i$ (deterministically) for all $1\le i\le \ell$.
  Observe that in view of \cref{claim:vector_q}, we have settled the first item of the lemma.

For $1\le i\le d$, let $\mathcal{B}_{i}$ be the event that $E(V_i,V_j)=\es$ for some $j>i$.
We want to show that $\pr(\bigcup_{i=1}^d \mathcal{B}_i)=o(1)$.
We begin by handling $\cB_i$ for $1\le i\le\ell$.
Let $1\le i\le \ell$.
Denote by $N_i$ the set of neighbours of $v_i$ in $V'$.
Note that $|N_i|\ge  5 d \log{n}- \ell\ge 4 d\log{n}$.
For every $j>i$,
by \cref{claim:vector_q},
$q_j\ge 1/(2d)$,
and hence $q_j |N_i|\ge 2\log{n}$.
Therefore, by the union bound over $j>i$, 
  \[
    \pr(\cB_i)
      \le \sum_{j>i}\pr(N_i\cap V_j=\es)
      \le \sum_{j>i}\left(1-q_j\right)^{|N_i|}
      \le \sum_{j>i}\exp(-q_j |N_i|) 
       \le d \cdot n^{-2} \ll n^{-1}.
  \]
By the union bound, we obtain
that the probability that $\cB_i$ occurs for some $1\le i\le \ell$ is $o(1)$.
If $\ell=d$, we are done. Otherwise, $d'\ge 1$. 

Write $n'=|V'|=n-\ell\sim n$.
Set $C=600$, $K=C\log{n}$
and $c=1/(40 C)$.
Fix $i<j\le d+1$ and let $x=|V_i|$ and $y=|V_j|$.
Notice that by \cref{claim:vector_q},
by Chernoff bounds (\cref{thm:chernoff}),
and by the union bound,
$\pr(\min\{x,y\}<n/(5d'))\le 2\exp(-n/(200d'))=o(n^{-2})$.
Note for later that $n/(5d')>K$ for sufficiently large $n$.
Observe that, conditioning on $x,y$, we may sample $V_i,V_j$ from $V'$
as follows:
first we sample a uniform subset $U$ of $V'$ of size $K'=\min\{K,x\}$;
then we sample a uniform subset $V_j$ of $V'\sm U$ of size $y$;
and finally we sample a uniform subset $V_i'$ of $V'\sm(U\cup V_j)$ of size $x-K'$, and set $V_i=U\cup V_i'$.
Note also that $d'\le \delta(G')\le \Delta(G')\le 5cn/\log{n}\le n'/(4K)$,
hence, by \cref{lem:random_uniform_subset},
for every $x_0\ge K$,
$\pr(|N_{G'}(U)|< d'K/3\mid x=x_0)\le 2\exp(-K/288) = o(n^{-2})$.
On the other hand, 
for every set $S\subseteq V'\setminus U$, every $x_0\ge K$ and every $y_0$,
\[
  \pr(S\cap V_j=\es\mid x=x_0,\ y=y_0)
  = \frac{\binom{n'-K-|S|}{y_0}}{\binom{n'-K}{y_0}}
  \le \left(1-\frac{|S|}{n}\right)^{y_0}
  \le \exp\left(-\frac{|S|y_0}{n}\right).
\]
Thus, for $x_0\ge K$ and $y_0\ge n/(5d')$,
\[
  \pr(N_{G'}(U)\cap V_j=\es\mid x=x_0,\ y=y_0)
  \le \pr\left(|N_{G'}(U)|\le d'K/3\mid x=x_0\right)
  + \exp(-K/15) = o\left(n^{-2}\right).
\]
Hence,
\begin{align*}
    &\pr(N_{G'}(U)\cap V_j=\es)
    = \sum_{x_0,y_0}
        \pr(N_{G'}(U)\cap V_j=\es\mid x=x_0,\ y=y_0)
        \cdot \pr(x=x_0,\ y=y_0)\\
    &\le \pr(\min\{x,y\}<n/(5d'))
       +\sum_{\substack{x_0\ge K\\y_0\ge n/(5d')}}
        \pr(N_{G'}(U)\cap V_j=\es\mid x=x_0,\ y=y_0)
        \cdot \pr(x=x_0,\ y=y_0)\\
    & = o\left(n^{-2}\right).
\end{align*}
Therefore, by the union bound over $j>i$, 
  \[
    \pr(\cB_i)
      \le \sum_{j>i}\pr(N_{G'}(U)\cap V_j=\es)
      = o\left(n^{-1}\right).
  \]
Finally, by another application of the union bound, we obtain
that the probability that $\cB_i$ occurs for some $\ell<i\le d$ is $o(1)$.
\end{proof}

\begin{proof}[Proof of \cref{prop:approx:close}]
For $d=1$ the claim follows from the known fact that an $n$-vertex graph with minimum degree at least $n/2-1/2$ is connected. Assume that $2\le d\le c'n/{\log^2}{n}$ for sufficiently small constant $c'>0$ to be determined later,
  and that $n$ is sufficiently large.
  By our assumption that $G=(V,E)$ is $\beta$-close to being bipartite, 
  we deduce that there exists a partition $V=A\cup B$
  for which $|E(A)|+|E(B)|\le\beta n^2$.
  Let $\beta'=4\beta$.
  Assume for contradiction that $|A|<(1/2-\beta')n$.
  In this case, $|B|>(1/2+\beta')n$.
  Since $\delta(G)\ge (n+2d)/2-1 \ge n/2$, it follows that every $v\in B$ has at least $\deg(v)-|A|\ge \beta'n$ neighbours in $B$.
  In particular, $|E(B)|\ge\frac{1}{2}|B|\cdot \beta'n>\beta n^2$,
  a contradiction.
  Therefore, $|A|\ge(1/2-\beta')n$. By the same argument, we also have $|B|\ge(1/2-\beta')n$.

  Let $\beta^\circ=\sqrt{2\beta}$.
  Let $A^\circ=\{u\in A\mid \deg(u,B)<(1/2-\beta^\circ)n\}$.
  Suppose to the contrary that $|A^\circ|>\beta^\circ n$.
  Thus, $|E(A)|>\frac{1}{2}\beta^\circ n\cdot(\delta(G)-(1/2-\beta^\circ)n)
  \ge \frac{1}{2}(\beta^\circ)^2n^2=\beta n^2$,
  a contradiction.
  Therefore, $|A^\circ|\le\beta^\circ n$.
  Similarly, letting 
  $B^\circ=\{v\in B\mid \deg(v,A)<(1/2-\beta^\circ)n\}$,
  we deduce that $|B^\circ|\le\beta^\circ n$.

  We will now make some adjudgements to the partition $(A,B)$.
  Let $A^\dagger=\{u\in A\mid \deg(u,B)<n/4\}$,
  $B^\dagger=\{v\in B\mid \deg(v,A)<n/4\}$,
  and $C=A^\dagger\cup B^\dagger$.
  As long as $C\ne\es$ we take $x\in C$ and move it to the other side
  (namely, if $x\in A$ we put it in $B$, and if $x\in B$ we put it in $A$).
  We note that once we move $x$ to the other side, it is not in $C$ any longer.
  However, this might introduce a new vertex to $C$.
  Nonetheless, we argue that the total number of vertices we move is at most $2\beta^\circ n$.
  We prove this by showing that every vertex we moved was in $A^\circ\cup B^\circ$.
  Indeed, at first every vertex of $C$ is there (by definition, assuming sufficiently small $\beta$).
  Suppose to the contrary that at some point we have a vertex in $C\sm (A^\circ\cup B^\circ)$,
  and let $x$ be the first such vertex.
  Since $x$ is first, by this time we have moved at most $2\beta^\circ n$ vertices.
  But then, the degree of $x$ to the other side is at least $(1/2-3\beta^\circ)n>n/4$
  (for sufficiently small $\beta$),
  a contradiction.
  Let $(A',B')$ denote the resulting partition,
  let $\beta_0=\beta+2\beta^\circ$,
  and let $\beta_1=\beta'+2\beta^\circ$.
  Since we moved at most $2\beta^\circ$ vertices,
  we have $|A'|,|B'|\ge (1/2-\beta_1)n$.
  Since each moved vertex adds to $|E(A)\cup E(B)|$ at most $n$ edges,
  we have $|E(A')|+|E(B')|\le \beta_0n^2$.
    Also, by definition, $\deg(v,B')\geq n/4$ for all $v\in A'$ and $\deg(v,A')\geq n/4$ for all $v\in B'$.
  
  Let $\beta^*=\sqrt{2\beta_0}$,
  and let $A^*=\{u\in A'\mid \deg(u,B')<(1/2-\beta^*)n\}$.
  Suppose to the contrary that $|A^*|>\beta^* n$.
  Then, $|E(A')|>\frac{1}{2}\beta^* n\cdot(\delta(G)-(1/2-\beta^*)n)
  \ge \frac{1}{2}(\beta^*)^2n^2=\beta_0 n^2$,
  a contradiction.
  Therefore, $|A^*|\le\beta^* n$.
  Similarly, letting 
  $B^*=\{v\in B'\mid \deg(v,A')<(1/2-\beta^*)n\}$,
  we deduce that $|B^*|\le\beta^* n$.

  Assume without loss of generality that $|A'|\ge |B'|$, and hence $|B'|\le n/2$
  and $n':=|A'|\ge n/2$.
  Then, for all $v\in A'$, $\deg(v,A')\ge\delta(G)-|B'|\ge n/2-1+d-n/2=d-1$.
  Thus, $\delta(G[A'])\ge d-1$. 
  Let $c>0$ be the constant guaranteed by \cref{lem:randomcol}, and set $c'=c/2$.
  Then, $d-1\le c'n/{\log^2}{n} \le cn'/{\log^2}{n'}$.
  Let $A'=A_1\cup\dots\cup A_d$ be the random colouring guaranteed in \cref{lem:randomcol},
  and let $B'=B_1\cup\dots\cup B_d$ be a uniform random colouring.

    \begin{claim}\label{claim:close1}
        With high probability, for all $1\leq i\leq d$ and $b_1,b_2\in B'\sm B^{*}$, $b_1$ and $b_2$ have a common neighbour in $A_i$.
    \end{claim}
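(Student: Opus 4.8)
The plan is to exploit the two structural facts we have already established about the modified partition $(A',B')$: first, that every $v\in A'$ satisfies $\deg(v,A')\ge d-1$, so that \cref{lem:randomcol} applies to $G[A']$ and yields a colouring $A'=A_1\cup\dots\cup A_d$ with a set $A''\subseteq A'$ of size $\ge |A'|-d$ on which each vertex lands in a given colour class with probability $\ge 1/(2d)$, independently across $A''$; and second, that every $b\in B'\sm B^{*}$ has $\deg(b,A')\ge (1/2-\beta^{*})n$, which is a linear fraction of $|A'|$. For a fixed pair $b_1,b_2\in B'\sm B^{*}$, the set $W=N_{G}(b_1)\cap N_{G}(b_2)\cap A'$ has size at least $|A'|-2\beta^{*}n\ge n/2 - 2\beta^{*}n$ by inclusion–exclusion (using $\deg(b_j,A')\ge (1/2-\beta^{*})n$ and $|A'|\le n$), so $|W|$ is a constant fraction of $n$, and in particular $|W\cap A''|\ge |W|-d$ is still linear in $n$.

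The core estimate is then the following: the probability that $A_i\cap W=\varnothing$ is at most the probability that none of the independent-colouring vertices in $W\cap A''$ receives colour $i$, which is at most $(1-1/(2d))^{|W\cap A''|}\le \exp(-|W\cap A''|/(2d))$. Since $|W\cap A''|\ge cn$ for some constant $c>0$ (absorbing $\beta^{*}$ and the additive $d$) and $d\le c'n/\log^2 n$, the exponent is at least $(\text{const})\cdot\log^2 n$, so this probability is $o(n^{-3})$, say. A union bound over the at most $d\cdot |B'|^2\le n^3$ triples $(i,b_1,b_2)$ then gives that with high probability every $b_1,b_2\in B'\sm B^{*}$ have a common neighbour in every $A_i$, which is exactly the claim. (If $b_1=b_2$ the statement is about a single common neighbour, i.e.\ a neighbour, in $A_i$, and the same bound applies verbatim.)

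The one point requiring care — and the step I expect to be the main technical obstacle — is the interface between the two ``good'' vertex sets: the vertices of $W$ on which we have no independence guarantee are exactly those in $A'\sm A''$, and there are at most $d$ of them, but one must check that discarding them still leaves $|W\cap A''|$ linear in $n$. This is fine here because $|W|\ge n/2 - 2\beta^{*}n$ while $d = o(n)$, so $|W\cap A''|\ge |W|-d \ge n/2 - 2\beta^{*}n - d \ge n/4$ for $\beta$ small and $n$ large; but it is worth recording explicitly, since it is the place where the bound $d=O(n/\log^2 n)$ (rather than a weaker bound) and the smallness of $\beta$ both get used. Everything else is a routine Chernoff-free computation: the common-neighbourhood bound is pure inclusion–exclusion, and the colouring bound is a single geometric-series estimate followed by a union bound.
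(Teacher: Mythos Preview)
Your approach matches the paper's: lower-bound the common neighbourhood $W=N(b_1)\cap N(b_2)\cap A'$ by inclusion--exclusion, use the independence on $A''$ to get $\pr(W\cap A_i=\varnothing)\le(1-1/(2d))^{|W|-(d-1)}\le\exp(-n/(8d))=o(n^{-3})$, and union-bound over the at most $n^2 d$ triples $(i,b_1,b_2)$. One arithmetic fix: inclusion--exclusion gives $|W|\ge 2(1/2-\beta^*)n-|A'|$, not $|A'|-2\beta^* n$, so you need the upper bound $|A'|\le(1/2+\beta_1)n$ (the bound $|A'|\le n$ gives nothing) to obtain $|W|\ge(1/2-2\beta^*-\beta_1)n\ge n/3$, which is exactly what the paper does.
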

    \begin{proof}
    Let $1\leq i\leq d$ and $b_1,b_2\in B'\sm B^*$. 
    Note that $|N(b_1)\cap N(b_2)\cap A'|\ge (1/2-\beta^*)n+(1/2-\beta^*)n-(1/2+\beta_1)n\ge   n/3$ (for sufficiently small $\beta$). Hence, using the first item of \cref{lem:randomcol},
  \[
  \pr(N(b_1)\cap N(b_2)\cap A_i=\es) \le \left(1-\frac{1}{2d}\right)^{n/3-(d-1)}\le \left(1-\frac{1}{2d}\right)^{n/4} \le \exp\left(-\frac{n}{8d}\right) = o(n^{-3}).
  \]
  By the union bound, with probability at least $1-n^2 d \cdot o(n^{-3}) = 1-o(1)$, for all $1\leq i\leq d$ and $b_1,b_2\in B'\sm B^{*}$, $b_1$ and $b_2$ have a common neighbour in $A_i$.
    \end{proof}

\begin{claim}\label{claim:close2}
With high probability, every $a\in A'$ has a neighbour in $B_j\sm B^{*}$ for all $1\leq j\leq d$, and every $b\in B'$ has a neighbour in $A_i$ for all $1\leq i\leq d$.
\end{claim}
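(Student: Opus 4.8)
The plan is to handle the two statements separately, each by a first-moment (union-bound) argument over the relevant vertices and colour classes, exploiting the linear lower bounds on the degrees $\deg(a,B')$ and $\deg(b,A')$ established above together with the colouring guarantees.

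For the first statement, fix $a\in A'$ and $1\le j\le d$. Recall that after the adjustment we have $\deg(a,B')\ge n/4$ for every $a\in A'$, and $|B^*|\le \beta^*n$; hence $|N(a)\cap(B'\sm B^*)|\ge n/4-\beta^*n\ge n/5$ for sufficiently small $\beta$. Since $B'$ is coloured uniformly at random with $d$ colours, each of these at least $n/5$ neighbours lands in $B_j$ independently with probability $1/d$, so
\[
  \pr\bigl(N(a)\cap(B_j\sm B^*)=\es\bigr)
  \le \Bigl(1-\frac1d\Bigr)^{n/5}
  \le \exp\!\Bigl(-\frac{n}{5d}\Bigr)
  = o(n^{-2}),
\]
using $d\le c'n/\log^2 n$. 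A union bound over the at most $n$ choices of $a$ and $d$ choices of $j$ gives that, with probability $1-o(1)$, every $a\in A'$ has a neighbour in $B_j\sm B^*$ for all $j$.

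For the second statement, fix $b\in B'$ and $1\le i\le d$. We have $\deg(b,A')\ge n/4$ (after the adjustment), so $b$ has at least $n/4$ neighbours in $A'$, of which at most $d-1$ lie in the deterministically coloured set $L'$; thus $b$ has at least $n/4-d+1\ge n/5$ neighbours in $A'\sm L'=A'\cap V'$. By the first item of \cref{lem:randomcol}, each such neighbour $v$ satisfies $\pr(v\in A_i)\ge 1/(2d)$, and these events are mutually independent over the distinct neighbours of $b$ in $V'$. Hence
\[
  \pr\bigl(N(b)\cap A_i=\es\bigr)
  \le \Bigl(1-\frac{1}{2d}\Bigr)^{n/5}
  \le \exp\!\Bigl(-\frac{n}{10d}\Bigr)
  = o(n^{-2}),
\]
and a union bound over the at most $n$ choices of $b$ and $d$ choices of $i$ finishes the argument. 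Combining the two bounds, both events hold simultaneously with probability $1-o(1)$.

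The only mildly delicate point — and the one to be careful about — is bookkeeping the exact constants from the chain of adjustments (that $\deg(a,B')\ge n/4$ and $\deg(b,A')\ge n/4$ survive after moving at most $2\beta^\circ n$ vertices, and that $|A^*|,|B^*|\le\beta^*n$), and checking that $\beta$ can be chosen small enough that $n/4-\beta^*n\ge n/5$; these are exactly the inequalities already set up in the proof of \cref{prop:approx:close}, so no new ideas are needed. No independence subtlety arises because the uniform colouring of $B'$ is independent across vertices and, on the $A'$ side, we only use the stated mutual independence over $V'$ from \cref{lem:randomcol}.
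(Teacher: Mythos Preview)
Your proof is correct and follows essentially the same approach as the paper: both parts are handled by the same first-moment union-bound argument, using $\deg(a,B'\sm B^*)\ge n/4-\beta^*n\ge n/5$ with the uniform $d$-colouring of $B'$ for the first part, and $\deg(b,A')\ge n/4$ minus the at most $d-1$ deterministically coloured vertices together with the $1/(2d)$ lower bound from \cref{lem:randomcol} for the second. The paper's write-up is slightly terser but the bounds and reasoning are identical.
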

\begin{proof}
    Let $1\leq j\leq d$ and $a\in A'$. Then, $\deg(a,B'\sm B^*)\geq n/4-\beta^{*}n \geq n/5$ (for sufficiently small $\beta$). Therefore,
    \[
        \pr(N(a)\cap B_j\sm B^{*} = \es)\le  \left(1-\frac{1}{d}\right)^{n/5}
    \le \exp\left(-\frac{n}{5d}\right)
    = o\left(n^{-2}\right).
    \]
    By the union bound, with probability at least $1-nd\cdot o(n^{-2})=1-o(1)$, every $a\in A'$ has a neighbour in $B_j\sm B^*$ for all $1\leq j\leq d$.

    Similarly, let $1\le i\le d$ and $b\in B'$. Then $\deg(b,A')\geq n/4$, so by the first item of \cref{lem:randomcol}, we have
    \[
        \pr(N(b)\cap A_i=\es)\le \left(1-\frac{1}{2d}\right)^{n/4-(d-1)}\le \left(1-\frac{1}{2d}\right)^{n/5} \le \exp\left(-\frac{n}{10d}\right) = o(n^{-2}). 
    \]
    By the union bound, with probability at least $1-nd\cdot o(n^{-2})=1-o(1)$, every $b\in B'$ has a neighbour in $A_i$ for all $1\leq i\leq d$.
\end{proof}

Condition on the events from \cref{lem:randomcol,claim:close1,claim:close2}.
Let $1\leq i,  j\leq d$.  Then, by \cref{claim:close2}, every vertex in $A_i\cup B_j$ has a path of length at most two in $G[A_i,B_j]$ to a vertex in $B_j\sm B^{*}$. By \cref{claim:close1}, every two vertices in $B_j\sm B^{*}$ are connected by a path of length two in $G[A_i,B_j]$. Therefore, $G[A_i,B_j]$ is connected.

 Define, for all $1\le i\le d$, $V_i=A_i\cup B_i$.
  We note that $G[V_i]\supseteq G[A_i,B_i]$, hence $G[V_i]$ is connected.
  Moreover, for $1\le i<j\le d$,
  the sets $A_i,B_j$ are in the same component in $G[V_i,V_j]$.
  Similarly,
  the sets $A_j,B_i$ are in the same component there.
  But, by \cref{lem:randomcol}, $E(A_i,A_j)\ne\es$;
  thus, there exists a unique connected component in $G[V_i,V_j]$,
  namely, it is connected.
  Thus, $(V_1,\dots,V_d)$ is a strong $d$-rigid partition of $G$. 
\end{proof}

\subsection{Far from bipartite}
In this section we prove \cref{prop:approx:far}.
As our main tool, we will use the regularity lemma.
Let us begin by recalling relevant definitions (see \cite{KS96}). 
For a disjoint pair of vertex sets $X,Y$ in a graph $G$,
its \defn{density} is $d(X,Y)=|E(X,Y)|/(|X||Y|)$.
A pair $(A,B)$ is called:
\begin{itemize}
  \item 
    \defn{$\eps$-regular}
    if for all $X\subseteq A$ and $Y\subseteq B$
    with $|X|\ge\eps|A|$ and $|Y|\ge\eps|B|$
    we have $|d(X,Y)-d(A,B)|\le\eps$;
  \item
    \defn{$(\eps,\delta)$-regular}
    if it is $\eps$-regular with $d(A,B)\ge \delta$;
  \item
    \defn{$(\eps,\delta)$-dense}
    if $d(X,Y)\ge\delta$ for every $X\subseteq A$ and $Y\subseteq B$
    with $|X|\ge\eps|A|$ and $|Y|\ge\eps|B|$;
  \item
    \defn{$(\eps,\delta)$-super-regular}
    if it is $\eps$-regular and, in addition,
    each $x\in A$ has at least $\delta|B|$ neighbours in $B$
    and each $y\in B$ has at least $\delta|A|$ neighbours in $A$.
\end{itemize}
Note that:
\begin{itemize}
  \item
    If $(A,B)$ is $(\eps,\delta)$-regular
    then for every $X\subseteq A$ and $Y\subseteq B$
    with $|X|\ge\eps|A|$ and $|Y|\ge\eps |B|$ we have $d(X,Y)\ge\delta-\eps$,
    that is, $(A,B)$ is $(\eps,\delta-\eps)$-dense.
  \item 
    If $(A,B)$ is $(\eps,\delta)$-super-regular
    then $d(A,B)\ge\delta$,
    that is, $(A,B)$ is $(\eps,\delta)$-regular
    (and hence $(\eps,\delta-\eps)$-dense).
\end{itemize}

We will need the following standard and simple lemma that asserts that every regular ``triple''
contains an almost-spanning super-regular ``sub-triple''.
\begin{lemma}\label{lem:superreg:3}
  Let $0<\eps<1/4$ and $\delta>4\eps$.
  Let $A_1,A_2,A_3$ be three pairwise disjoint sets with $|A_1|=|A_2|=|A_3|=M$
  such that each of the pairs $(A_i,A_j)$, $1\le i<j\le 3$,
  is a $(\eps,\delta)$-regular pair.
  Then, there exist $M'\ge(1-2\eps)M$
  and
  $A_i'\subseteq A_i$ with $|A'_i|=M'$, for every $i\in[3]$,
  such that each of the pairs $(A_i',A_j')$, $1\le i<j\le 3$,
  is $(2\eps,\delta-4\eps)$-super-regular.
\end{lemma}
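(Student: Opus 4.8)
The plan is the standard ``cleaning'' argument that upgrades a regular triple to a super-regular one by discarding low-degree vertices. First, for each ordered pair $i\neq j$ in $[3]$ I would let
\[
  B_{ij}=\{v\in A_i:\ \deg_G(v,A_j)<(d(A_i,A_j)-\eps)M\},
\]
and check that $|B_{ij}|<\eps M$: otherwise $(B_{ij},A_j)$ is an admissible pair in the definition of $\eps$-regularity of $(A_i,A_j)$ (indeed $|A_j|=M\ge\eps M$), which forces $d(B_{ij},A_j)\ge d(A_i,A_j)-\eps$, directly contradicting the definition of $B_{ij}$. Setting $B_i=\bigcup_{j\neq i}B_{ij}$ gives $|B_i|<2\eps M$, so $M':=\min_i|A_i\sm B_i|\ge(1-2\eps)M$; I would then pick $A_i'\subseteq A_i\sm B_i$ with $|A_i'|=M'$ arbitrarily for each $i$. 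Pairwise disjointness of the $A_i'$ is inherited from that of the $A_i$.

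Next I would verify the two defining properties of $(2\eps,\delta-4\eps)$-super-regularity for each pair $(A_i',A_j')$. For the degree condition: any $v\in A_i'$ avoids $B_{ij}$, so $\deg_G(v,A_j)\ge(d(A_i,A_j)-\eps)M\ge(\delta-\eps)M$; since $|A_j\sm A_j'|=M-M'\le2\eps M$, this yields $\deg_G(v,A_j')\ge(\delta-3\eps)M\ge(\delta-4\eps)M'$, using $\delta>4\eps$ and $M\ge M'$. For $2\eps$-regularity, I would invoke the usual ``slicing'' observation: the retained fraction $\gamma=M'/M$ satisfies $\gamma\ge1-2\eps>1/2$ precisely because $\eps<1/4$, so any $X\subseteq A_i'$ with $|X|\ge2\eps|A_i'|=2\eps\gamma M$ already has $|X|\ge\eps M=\eps|A_i|$ (as $2\gamma\ge1$), and likewise for subsets of $A_j'$; applying $\eps$-regularity of $(A_i,A_j)$ to such a pair $X,Y$, and also to $A_i',A_j'$ themselves (which are $\ge\eps M$ in size since $\gamma>\eps$), the triangle inequality gives $|d(X,Y)-d(A_i',A_j')|\le2\eps$. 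The same comparison gives $d(A_i',A_j')\ge d(A_i,A_j)-\eps\ge\delta-\eps\ge\delta-4\eps$, so $(A_i',A_j')$ is $(2\eps,\delta-4\eps)$-regular, and together with the degree bound it is $(2\eps,\delta-4\eps)$-super-regular.

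This lemma is essentially bookkeeping, so I do not anticipate a genuine obstacle; the only step that needs care is propagating the regularity parameter through the passage to subsets, and that is exactly where the factor-of-two loss ($\eps\mapsto2\eps$ in the conclusion) is incurred. It relies on the hypothesis $\eps<1/4$, which ensures $M'/M>1/2$ and hence that a $2\eps$-fraction of $A_i'$ is still an $\eps$-fraction of $A_i$.
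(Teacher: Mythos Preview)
Your proposal is correct and follows essentially the same cleaning argument as the paper: discard the at most $\eps M$ low-degree vertices in each direction, trim to a common size $M'\ge(1-2\eps)M$, and verify super-regularity via the slicing observation that a $2\eps$-fraction of $A_i'$ is still an $\eps$-fraction of $A_i$. The only cosmetic difference is that the paper sets the low-degree threshold at $(\delta-2\eps)M$ rather than your $(d(A_i,A_j)-\eps)M$, leading to the degree bound $(\delta-4\eps)|A_j|\ge(\delta-4\eps)|A_j'|$ directly instead of your intermediate $(\delta-3\eps)M$; both routes arrive at the same conclusion.
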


\begin{proof}
  For $i\in[3]$ and $j\in[3]\sm\{i\}$, let $A_i^j=\{x\in A_i\mid \deg(x,A_j)<(\delta-2\eps)|A_j|\}$.
  Note that $|A_i^j|<\eps|A_i|$.
  Indeed,
  if $|A_i^j|\ge\eps |A|$ then, since $(A_i,A_j)$ is $(\eps,\delta)$-regular,
  \[
    |A_i^j|\cdot(\delta-2\eps)|A_j|>|E(A_i^j,A_j)|
    =|A_i^j||A_j|\cdot d(A_i^j,A_j)\ge|A_i^j||A_j|(\delta-\eps),
  \]
  a contradiction.
  For $i\in[3]$ and $\{j,k\}=[3]\sm\{i\}$,
  Let $A_i^*=A_i\sm (A_i^j\cup A_i^{k})$.
  Note that $|A_i^*|\ge(1-2\eps)|A_i|$.
  Set $M'=\min\{|A_i^*|:i\in[3]\}$, so $M'\ge(1-2\eps)M$.
  For each $i\in[3]$, remove, if needed, arbitrary elements from $A_i^*$,
  to make it of size $M'$, and call the resulting set $A_i'$.
  In particular, $|A_i'|\ge(1-2\eps)|A_i|\ge |A_i|/2\ge\eps|A_i|$.
  Thus, for $j\in[3]\sm\{i\}$,
  $|d(A_i',A_j')-d(A_i,A_j)|\le\eps$.
  Moreover,
  for every $X_i\subseteq A_i'$ and $X_j\subseteq A_j'$
  with $|X_i|\ge 2\eps|A_i'|\ge \eps|A_i|$ and $|X_j|\ge 2\eps|A_j'|\ge \eps|A_j|$,
  we have $|d(X_i,X_j)-d(A_i',A_j')|\le|d(X_i,X_j)-d(A_i,A_j)|+|d(A_i,A_j)-d(A_i',A_j')|\le 2\eps$.
  Hence, $(A_i',A_j')$ is $2\eps$-regular.
  In addition, if $x\in A_i'$, 
  then $\deg(x,A_j')\ge\deg(x,A_j)-2\eps|A_j|\ge(\delta-4\eps)|A_j|\ge(\delta-4\eps)|A_j'|$.
  Thus, $(A_i',A_j')$ is $(2\eps,\delta-4\eps)$-super-regular.
\end{proof}

We will use the following degree version of the regularity lemma (see~\cite{KS96}).

\begin{lemma}[Regularity lemma, degree version]\label{lem:reg}
  For every $\eps>0$ and $\ell_0>0$
  there exists an integer $L=L(\eps,\ell_0)$
  such that for every $n$-vertex graph $G=(V,E)$ with $n\ge L$
  and every $\delta\in[0,1]$
  there exists a partition of $V$ into $\ell+1$ sets $V_0,V_1,\dots,V_\ell$
  with $\ell_0\le\ell\le L$,
  and a spanning subgraph $G'$ of $G$ with the following properties:
  \begin{enumerate}
    \item $|V_0|\le\eps n$ and $|V_i|=M$ for all $i\in[\ell]$ and some $M>0$;
    \item $\deg_{G'}(v)\ge\deg_G(v)-(\delta+\eps)n$ for all $v\in V$;
    \item $V_i$ is independent in $G'$ for all $i\in[\ell]$;
    \item For every $1\le i<j\le \ell$,
      either $E_{G'}(V_i,V_j)=\es$
      or $(V_i,V_j)$ is $(\eps,\delta)$-regular in $G'$.
  \end{enumerate}
\end{lemma}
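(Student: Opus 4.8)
The plan is to deduce \cref{lem:reg} from the original ``bounded number of irregular pairs'' form of Szemer\'edi's regularity lemma (see~\cite{KS96}) by a two-stage cleaning of the resulting partition, followed by a careful count of how many edges get deleted at each vertex. First I would fix an auxiliary constant $\eps_1$ with $\eps_1\le(\eps/10)^2$ and set $\ell_0'=\max\{2\ell_0,\lceil 1/\eps_1\rceil\}$. Applying the standard regularity lemma with parameters $\eps_1$ and $\ell_0'$ yields a partition $W_0,W_1,\dots,W_k$ with $\ell_0'\le k\le L_0(\eps_1,\ell_0')$, $|W_0|\le\eps_1 n$, $|W_i|=m$ for $i\ge 1$, and at most $\eps_1\binom{k}{2}$ pairs that are not $\eps_1$-regular in $G$; I would then take $L(\eps,\ell_0)=L_0(\eps_1,\ell_0')$.

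The cleaning has two stages. First, call $W_i$ \emph{heavy-irregular} if it lies in more than $\sqrt{\eps_1}\,k$ irregular pairs; double-counting the irregular pairs bounds the number of heavy-irregular clusters by $\sqrt{\eps_1}\,k$, and I would move all of them into the exceptional part. Second, for each remaining cluster $W_i$ and each $\eps_1$-regular partner $W_j$ with density $d(W_i,W_j)<\delta+2\eps_1$, regularity forces all but at most $\eps_1 m$ vertices of $W_i$ to have fewer than $(\delta+3\eps_1)m$ neighbours in $W_j$; call $v\in W_i$ \emph{bad} if it is exceptional in this sense for more than $\sqrt{\eps_1}\,k$ such partners, so that --- again by double counting --- each remaining cluster has at most $\sqrt{\eps_1}\,m$ bad vertices. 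I would move all bad vertices into the exceptional part as well, and finally truncate every remaining cluster to a common size $M\ge(1-\sqrt{\eps_1})m$, moving the leftover vertices to the exceptional part too. Let $V_0$ denote the enlarged exceptional part and $V_1,\dots,V_\ell$ the truncated survivors: only an $O(\sqrt{\eps_1})$-fraction of the vertices was moved, so $|V_0|\le\eps n$, while $\ell\ge(1-\sqrt{\eps_1})k\ge\ell_0$ and $\ell\le k\le L$. I would then let $G'$ be obtained from $G$ by deleting every edge meeting $V_0$, every edge inside some $V_i$, and, for $1\le i<j\le\ell$, all cross-edges of the pair $(V_i,V_j)$ whenever $(W_i,W_j)$ is not $\eps_1$-regular in $G$ or $d(W_i,W_j)<\delta+2\eps_1$.

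Properties (1) and (3) are then immediate. For property (4): if $(V_i,V_j)$ survives deletion then $(W_i,W_j)$ is $\eps_1$-regular with density at least $\delta+2\eps_1$; since $V_i,V_j$ are subsets of $W_i,W_j$ of relative size at least $1-\sqrt{\eps_1}$, the pair $(V_i,V_j)$ is $2\eps_1$-regular with density at least $\delta$ (the standard ``large subsets of a regular pair stay regular'' computation, exactly as in the proof of \cref{lem:superreg:3}), and its cross-edges are untouched in $G'$, so it is $(\eps,\delta)$-regular in $G'$ because $2\eps_1\le\eps$. The heart of the matter is property (2). For a surviving vertex $v\in V_i$ I would bound the number of edges deleted at $v$ by a sum of four terms: edges to $V_0$, at most $|V_0|=O(\sqrt{\eps_1})n$; edges inside $V_i$, at most $M\le m\le n/k\le\eps_1 n$; edges to irregular partners of $V_i$, of which there are at most $\sqrt{\eps_1}\,k$ because $V_i$ is not heavy-irregular, hence at most $\sqrt{\eps_1}\,km\le\sqrt{\eps_1}\,n$ such edges; and edges to ``low-density'' partners $V_j$, meaning those with $(W_i,W_j)$ $\eps_1$-regular and $d(W_i,W_j)<\delta+2\eps_1$. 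For this last term, since $v$ is not bad it is exceptional for at most $\sqrt{\eps_1}\,k$ low-density partners, contributing at most $\sqrt{\eps_1}\,km\le\sqrt{\eps_1}\,n$ edges, and it has fewer than $(\delta+3\eps_1)m$ neighbours in each of the remaining low-density partners, contributing at most $(\delta+3\eps_1)m\ell\le(\delta+3\eps_1)n$ edges. Adding the four terms, $v$ loses at most $\delta n+O(\sqrt{\eps_1})n\le(\delta+\eps)n$ edges, which is exactly property (2).

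The main obstacle, and the reason the naive ``delete all irregular and low-density pairs'' argument is not enough, is precisely this per-vertex degree bound: without cleaning, a single vertex could a priori have almost all of its neighbours inside irregular or low-density pairs and thus lose almost all of its edges. The two-stage cleaning localises the loss --- moving offending \emph{clusters} into $V_0$ tames the irregular-pair contribution, while moving offending \emph{vertices} into $V_0$ tames the low-density-pair contribution (the one place regularity is genuinely used, through the ``few heavy vertices'' estimate) --- and the only quantity one has to keep under control throughout is the total number of vertices moved into $V_0$, which is what dictates choosing $\eps_1$ of order $\eps^2$.
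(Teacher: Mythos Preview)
The paper does not prove \cref{lem:reg}; it is quoted as a known result with a reference to~\cite{KS96}. So there is no ``paper's own proof'' to compare against, and your task is really to give a self-contained derivation of the degree form from the classical regularity lemma. Your overall strategy --- apply the standard lemma with a smaller parameter $\eps_1$, then clean away heavy-irregular clusters and, within each surviving cluster, the vertices that are exceptional in too many low-density regular pairs --- is exactly the right idea, and your per-vertex accounting for surviving vertices $v\in V_i$ is correct.

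There is, however, a genuine gap. You define $G'$ by ``deleting every edge meeting $V_0$'', and you only verify property~(2) for vertices $v\in V_i$ with $i\ge 1$. But the lemma asks for $\deg_{G'}(v)\ge\deg_G(v)-(\delta+\eps)n$ for \emph{all} $v\in V$, including $v\in V_0$. With your construction, any $v\in V_0$ has $\deg_{G'}(v)=0$, so property~(2) fails whenever such a vertex has degree larger than $(\delta+\eps)n$ in $G$ (take $G=K_n$ for a concrete counterexample). The fix is simple and standard: do \emph{not} delete the edges incident to $V_0$. The lemma permits this --- item~(3) only requires $V_i$ to be independent for $i\in[\ell]$, not for $i=0$, and item~(4) only constrains pairs $(V_i,V_j)$ with $1\le i<j\le\ell$. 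With this change, every $v\in V_0$ keeps all of its edges, so property~(2) holds trivially for it; and for $v\in V_i$ with $i\ge 1$ your four-term bound still applies (indeed, the ``edges to $V_0$'' term simply drops out, making the estimate cleaner). Nothing else in your argument needs to change.
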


Given a graph $G=(V,E)$,
a partition $V=V_1\cup\dots\cup V_\ell$,
and parameters $\eps,\delta>0$,
let $R(G,(V_1,\dots,V_\ell),\eps,\delta)$
be the graph on the vertex set $\{V_1,\dots,V_\ell\}$
where $V_i\sim V_j$ if and only if $(V_i,V_j)$ is $(\eps,\delta)$-regular in $G$.
We think of the regularity lemma as an algorithm:
its input is $\eps,\ell_0,n,\delta$ with $\eps>0$, $\ell_0\ge 1$, $n\ge L(\eps,\ell_0)$,
$\delta\in[0,1]$, and an $n$-vertex graph $G$;
and its output is a partition $V_0,V_1,\dots,V_\ell$ with the desired properties,
the so-called \defn{pure graph} $G''=G'[V\sm V_0]$,
and the \defn{reduced graph} $R=R(G'',(V_1,\dots,V_\ell),\eps,\delta)$.

\begin{lemma}\label{lem:pure:size}
  Let $\eps,\ell_0,n,\delta$ with $\eps>0$, $\ell_0\ge 1$, $n\ge L(\eps,\ell_0)$, and $\delta\in[0,1]$,
  and let $G=(V,E)$ be an $n$-vertex graph.
  Let $G''$ be the pure graph obtained by \cref{lem:reg} with the above parameters.
  Then: $|E(G'')|\ge |E(G)| - (\delta+3\eps)n^2/2$.
\end{lemma}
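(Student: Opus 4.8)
The plan is a direct edge-counting argument: I would bound the number of edges of $G$ that fail to survive into the pure graph $G''=G'[V\sm V_0]$, grouping these ``lost'' edges by the reason they are lost. Recall that the edge set of $G''$ is exactly the set of edges of $G'$ having no endpoint in $V_0$. Consequently an edge of $G$ is absent from $G''$ only if it is already absent from $G'$ or it meets $V_0$, so I would start from
\[
  |E(G)|-|E(G'')|
  \le |E(G)\sm E(G')| + |\{e\in E(G):\, e\cap V_0\ne\es\}|.
\]
Here it is worth noting that every edge of $G$ lying inside a single cluster $V_i$ ($i\in[\ell]$) is automatically counted in the first term, since each $V_i$ is independent in $G'$ by \cref{lem:reg}(3); this is the one place where one must be slightly careful not to lose track of edges (and it is why no separate ``intra-cluster'' term is needed).

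For the first term, I would use that $G'$ is a spanning subgraph of $G$, so $\deg_{G'}(v)\le\deg_G(v)$ for all $v$, together with \cref{lem:reg}(2), which gives $\deg_G(v)-\deg_{G'}(v)\le(\delta+\eps)n$; summing over $v\in V$ and halving bounds $|E(G)\sm E(G')|$ by $(\delta+\eps)n^2/2$. For the second term, each vertex of $V_0$ is incident to fewer than $n$ edges of $G$, and $|V_0|\le\eps n$ by \cref{lem:reg}(1), so $|\{e\in E(G):\, e\cap V_0\ne\es\}|\le\eps n\cdot n=\eps n^2$. Adding the two estimates yields
\[
  |E(G)|-|E(G'')|\le \frac{(\delta+\eps)n^2}{2}+\eps n^2=\frac{(\delta+3\eps)n^2}{2},
\]
which rearranges to the claimed inequality.

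There is no real obstacle here beyond the bookkeeping observation above: the content of the statement is carried entirely by properties (1)--(3) of \cref{lem:reg}, and the constants match up exactly, so no slack or case analysis is required.
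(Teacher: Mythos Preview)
Your argument is correct and is essentially the same as the paper's: both are direct degree/edge-counting bounds using properties (1) and (2) of \cref{lem:reg}. The paper packages the computation slightly differently---it first establishes the per-vertex bound $\deg_{G''}(v)\ge\deg_G(v)-(\delta+2\eps)n$ and then sums over $V\sm V_0$---whereas you split the lost edges into the two sets $E(G)\sm E(G')$ and $\{e:e\cap V_0\ne\es\}$; but the arithmetic and the ideas are the same.
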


\begin{proof}
  We recall that for every $v\in V$,
  $\deg_{G''}(v)\ge\deg_{G'}(v)-|V_0|\ge \deg_G(v)-(\delta+2\eps)n$.
  Thus,
  \[\begin{aligned}
    2|E(G'')|
      & =\sum_{v\in V\sm V_0} \deg_{G''}(v)
        \ge \sum_{v\in V} \deg_G(v)- \sum_{v\in V\sm V_0}(\delta+2\eps)n - \sum_{v\in V_0}\deg_G(v)\\
      &\ge 2|E(G)|-(\delta+2\eps)n^2-\eps n^2
       = 2|E(G)|-(\delta+3\eps)n^2,
  \end{aligned}\]
  as required.
\end{proof}

We deduce that the reduced graph has, approximately, the ``correct'' number of edges.
\begin{corollary}\label{cor:reduced:size}
  Let $\eps,\ell_0,n,\delta$ with $\eps>0$, $\ell_0\ge 1$, $n\ge L(\eps,\ell_0)$, and $\delta\in[0,1]$,
  and let $G=(V,E)$ be an $n$-vertex graph. Let $V_0,V_1,\dots,V_\ell$ be the partition obtained by \cref{lem:reg}
  with the above parameters, and let $R$ be the corresponding reduced graph. 
  Then: $|E(R)|\ge \frac{\ell^2}{n^2}\cdot|E(G)| - (\delta+3\eps)\ell^2/2$.
\end{corollary}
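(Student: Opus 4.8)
The plan is to derive the bound in two moves: first show $|E(R)|\ge \frac{\ell^2}{n^2}\,|E(G'')|$ by a direct counting argument, and then substitute the estimate $|E(G'')|\ge |E(G)|-(\delta+3\eps)n^2/2$ supplied by \cref{lem:pure:size}.

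First I would record that every edge of the pure graph $G''=G'[V\sm V_0]$ joins two \emph{distinct} parts $V_i,V_j$ with $i,j\in[\ell]$: by property~3 of \cref{lem:reg} each $V_i$ is independent in $G'$, hence in the subgraph $G''$, and $V\sm V_0=V_1\cup\dots\cup V_\ell$. Thus $|E(G'')|=\sum_{1\le i<j\le\ell}|E_{G''}(V_i,V_j)|$. Next I would argue that a pair $\{V_i,V_j\}$ contributes to this sum only if it is an edge of $R$. Indeed, suppose $V_i\not\sim V_j$ in $R$, i.e. $(V_i,V_j)$ is not $(\eps,\delta)$-regular in $G''$; since $V_i\cup V_j\subseteq V\sm V_0$, the bipartite graph between $V_i$ and $V_j$ is the same in $G''$ and in $G'$, so $(V_i,V_j)$ is not $(\eps,\delta)$-regular in $G'$ either, and property~4 of \cref{lem:reg} then forces $E_{G'}(V_i,V_j)=\es$, whence $E_{G''}(V_i,V_j)=\es$. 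Using $|E_{G''}(V_i,V_j)|\le|V_i||V_j|=M^2$ this gives
\[
  |E(G'')| \;=\; \sum_{V_i\sim V_j\text{ in }R} |E_{G''}(V_i,V_j)| \;\le\; |E(R)|\cdot M^2 .
\]

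Finally, since $\ell M = |V\sm V_0|\le n$ we have $M\le n/\ell$, so $M^2\le n^2/\ell^2$ and therefore $|E(R)|\ge \frac{\ell^2}{n^2}|E(G'')|$. Plugging in \cref{lem:pure:size},
\[
  |E(R)| \;\ge\; \frac{\ell^2}{n^2}\Bigl(|E(G)| - (\delta+3\eps)\tfrac{n^2}{2}\Bigr)
        \;=\; \frac{\ell^2}{n^2}\,|E(G)| - (\delta+3\eps)\tfrac{\ell^2}{2},
\]
which is the claimed inequality.

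This is essentially a bookkeeping computation, so there is no real obstacle; the only point that deserves care is the passage between the graphs $G'$ and $G''$ — namely that removing the exceptional set $V_0$ alters neither the bipartite graphs between the $V_i$'s nor hence their regularity, so that property~4 of the regularity lemma (phrased for $G'$) can be applied to the reduced graph $R$ (defined via $G''$) — together with the elementary estimate $M\le n/\ell$ that converts the per-pair loss $M^2$ into the stated factor $\ell^2/n^2$.
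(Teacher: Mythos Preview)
Your proof is correct and follows exactly the same approach as the paper: bound $|E(G'')|\le M^2|E(R)|$, use $M\le n/\ell$, and plug in \cref{lem:pure:size}. The paper states the inequality $|E(G'')|\le M^2|E(R)|$ without justification, whereas you supply the details (including the careful point that the bipartite graphs between the $V_i$'s coincide in $G'$ and $G''$), so your write-up is in fact more complete.
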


\begin{proof}
 Let $G''$ be the pure graph. Recall that by \cref{lem:reg}, $|V_i|=M$ for all $1\leq i\leq \ell$. Therefore, $M=|V_1|\le n/\ell$.
  By \cref{lem:pure:size},
  $|E(G'')|\ge |E(G)| - (\delta+3\eps)n^2/2$.
  On the other hand,
  $|E(G'')|\le M^2|E(R)|$,
  hence $|E(R)| \ge \frac{\ell^2}{n^2}\cdot|E(G)|-(\delta+3\eps)\ell^2/2$.
\end{proof}

We now show that if $G$ is far from being bipartite and dense enough, then its reduced graph contains a triangle.
The following lemma makes this statement precise:
\begin{lemma}\label{lem:far:triangle}
  Let $\beta>0$.
  Then, there exist sufficiently small $\eps>0$ and sufficiently large $\ell_0$ such that the following holds.
  Let  $n\ge L(\eps,\ell_0)$, let $\delta=8\eps$,
  and let $G=(V,E)$ be an $n$-vertex graph
  with $|E(G)|\ge n^2/4$,
  which is $\beta$-far from being bipartite.
  Let $R$ be the reduced graph obtained by \cref{lem:reg} with the above parameters.
  Then, $R$ contains a triangle.
\end{lemma}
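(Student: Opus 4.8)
The plan is to run the regularity lemma on $G$ and show that its reduced graph $R$ cannot be triangle-free, since a triangle-free $R$ would force $G$ to be close to bipartite. Given $\beta$, I would fix $\gamma=\beta/2$, let $\eta=\eta(\gamma)>0$ be the constant supplied by the stability version of Mantel's theorem recalled below, choose $\eps>0$ small enough that $11\eps/2<\min\{\eta,\beta/2\}$ and $8\eps<1$, and take $\ell_0$ large enough that that stability statement applies to graphs on at least $\ell_0$ vertices. Applying \cref{lem:reg} to $G$ with parameters $\eps$, $\ell_0$ and $\delta=8\eps$ produces a partition $V_0,V_1,\dots,V_\ell$, a pure graph $G''=G'[V\sm V_0]$, and the reduced graph $R$ on $\{V_1,\dots,V_\ell\}$. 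Since $|E(G)|\ge n^2/4$, \cref{cor:reduced:size} gives $|E(R)|\ge\frac{\ell^2}{n^2}\cdot\frac{n^2}{4}-\frac{(\delta+3\eps)\ell^2}{2}=(1/4-11\eps/2)\ell^2$.

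Assume, for contradiction, that $R$ is triangle-free. Here I would invoke the following standard \emph{stability form of Mantel's theorem}: for every $\gamma>0$ there is $\eta>0$ such that every triangle-free graph on $m$ vertices, $m$ sufficiently large, with at least $(1/4-\eta)m^2$ edges admits a bipartition of its vertex set into two parts with at most $\gamma m^2$ edges inside the parts. (This is classical, and can be proved, for instance, by a short Cauchy--Schwarz computation showing that a near-extremal triangle-free graph has a nearly constant degree sequence.) Our choice of $\eps$ makes $|E(R)|>(1/4-\eta)\ell^2$, so this yields a partition $\{V_1,\dots,V_\ell\}=P_1\cup P_2$ with $e(R[P_1])+e(R[P_2])\le\gamma\ell^2$.

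The next step is to lift this to a bipartition of $V(G)$ and count the edges it fails to cut. Put $\hat P_1=\bigcup_{V_i\in P_1}V_i$ and $\hat P_2=V_0\cup\bigcup_{V_i\in P_2}V_i$. An edge of $G$ lying inside $\hat P_1$ or inside $\hat P_2$ is of one of two types. First, it may fail to be an edge of $G''$; by \cref{lem:pure:size} there are at most $(\delta+3\eps)n^2/2=11\eps n^2/2$ such edges in total (they include all edges inside a single $V_i$, which is independent in $G''$, and all edges meeting $V_0$). Otherwise, it is an edge of $G''$ between two distinct parts $V_i\ne V_j$; by property~(4) of \cref{lem:reg} the pair $(V_i,V_j)$ is then $(\eps,\delta)$-regular in $G''$, hence $V_i\sim V_j$ in $R$, and for the edge to lie inside $\hat P_1$ or $\hat P_2$ we need $V_i$ and $V_j$ in the same $P_t$. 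There are at most $e(R[P_1])+e(R[P_2])\le\gamma\ell^2$ such $R$-edges, and each corresponding pair spans at most $M^2\le(n/\ell)^2$ edges of $G$, for a total of at most $\gamma n^2$. Hence deleting at most $(11\eps/2+\gamma)n^2<\beta n^2$ edges makes $G$ bipartite, contradicting that $G$ is $\beta$-far from being bipartite. Therefore $R$ contains a triangle.

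I expect the stability step to be the main obstacle. It is the only point where the density hypothesis $|E(G)|\ge n^2/4$ genuinely enters — without density, a triangle-free reduced graph could itself be far from bipartite, e.g.\ a blow-up of $C_5$ — and it is precisely the near-extremal edge count of $R$ from \cref{cor:reduced:size} that makes Mantel's stability theorem applicable. The remaining steps are routine regularity bookkeeping, as long as the constants are chosen in the order $\beta\to\gamma\to\eta\to\eps\to\ell_0\to L(\eps,\ell_0)$.
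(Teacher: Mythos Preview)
Your proposal is correct and follows essentially the same approach as the paper: bound $|E(R)|$ via \cref{cor:reduced:size}, apply the Erd\H{o}s--Simonovits stability theorem (what you call the stability form of Mantel's theorem) to a hypothetically triangle-free $R$, and then lift the near-bipartition of $R$ to one of $G$ to reach a contradiction. The only organisational difference is that the paper packages the lifting step as a separate lemma (\cref{lem:close}), whereas you carry out that edge-count directly; your inline version is precisely the proof of \cref{lem:close}, and your choice of constants is equivalent to the paper's.
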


For the proof of \cref{lem:far:triangle}
we will use the following special case of a structural stability result of Erd\H{o}s and Simonovitz
(\cite{ES96}; see also e.g. ~\cite[Theorem 1]{Fur15}).

\begin{theorem}\label{thm:stab}
  For every $\beta>0$ there exists $\eta>0$ and $n_0\ge 1$
  such that if $n\ge n_0$ and $G=(V,E)$ is an $n$-vertex triangle-free graph
  with $|E|\ge \left(\frac{1}{4}-\eta\right)n^2$
  then $G$ is $\beta$-close to being bipartite.
\end{theorem}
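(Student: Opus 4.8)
The plan is to bypass the regularity lemma entirely: I will produce an explicit near-bipartition of $G$ from the neighbourhood of a maximum-degree vertex, which in fact yields the sharp statement that any triangle-free $n$-vertex graph can be made bipartite by deleting at most $\floor{n^2/4}-|E(G)|$ edges (cf.\ \cite[Theorem~1]{Fur15}). The theorem then follows immediately by taking, say, $\eta:=\beta/2$ and $n_0:=2$. Concretely, I would fix a vertex $v$ with $D:=\Delta(G)=\deg_G(v)$ and set $X:=N_G(v)$, $Y:=V\sm X$. Since $G$ is triangle-free, no two neighbours of $v$ are adjacent, so $X$ is an independent set; hence for the bipartition $(X,Y)$ the only edges that must be removed are those inside $Y$, and it suffices to show $|E_G(Y)|\le\eta n^2$.

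The bound on $|E_G(Y)|$ comes from the single structural consequence of triangle-freeness that we need. Because $D$ is the maximum degree, every $y\in Y$ satisfies $\deg_G(y,X)=\deg_G(y)-\deg_G(y,Y)\le D-\deg_G(y,Y)$. Summing over $y\in Y$, using that each $X$--$Y$ edge has exactly one endpoint in $Y$ and that $\sum_{y\in Y}\deg_G(y,Y)=2|E_G(Y)|$, gives $|E_G(X,Y)|\le|Y|D-2|E_G(Y)|$. Combining this with $|E(G)|=|E_G(X,Y)|+|E_G(Y)|$ (recall $|E_G(X)|=0$) yields $|E(G)|\le|Y|D-|E_G(Y)|=(n-D)D-|E_G(Y)|\le n^2/4-|E_G(Y)|$, so $|E_G(Y)|\le n^2/4-|E(G)|\le n^2/4-(1/4-\eta)n^2=\eta n^2$. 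Deleting the at most $\eta n^2<\beta n^2$ edges inside $Y$ leaves a graph whose every edge runs between $X$ and $Y$; such a graph is bipartite, so $G$ is $\beta$-close to being bipartite. (The degenerate cases $D=0$ and $|Y|\le1$ are trivial and would be disposed of first.)

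I expect the only real difficulty to be spotting the right object, since the naive approaches break down precisely at the extremal density $D\approx n/2$: a maximum cut delivers only about $|E(G)|/2\approx n^2/8$ crossing edges, far short of what is needed, and bounding $|E_G(X,Y)|\le|X||Y|$ and $|E_G(Y)|\le|Y|^2/4$ separately (the latter by Mantel applied to $G[Y]$) overshoots $n^2/4$ when $D$ is near $n/2$ because it ignores that an edge $\{y,y'\}$ inside $Y$ forces $\deg_G(y,X)+\deg_G(y',X)\le|X|$. The estimate $\deg_G(y,X)\le\Delta(G)-\deg_G(y,Y)$ is exactly what converts that structural pressure into a usable linear inequality, and as a by-product it re-derives Mantel's bound $|E(G)|\le\floor{n^2/4}$ along the way. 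Because the whole argument reduces to an elementary degree count, no quantitative lower bound on $n$ is needed beyond $n\ge2$, and any $\eta<\beta$ works.
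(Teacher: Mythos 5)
Your argument is correct, and it takes a genuinely different route from the paper: the paper does not prove \cref{thm:stab} at all, but imports it as a special case of the Erd\H{o}s--Simonovits stability theorem (citing \cite{ES96} and \cite{Fur15}), whereas you give a short self-contained proof. Your key step checks out: with $v$ of maximum degree $D$, $X=N(v)$, $Y=V\sm X$, triangle-freeness makes $X$ independent, and summing $\deg(y,X)\le D-\deg(y,Y)$ over $y\in Y$ gives $|E(G)|=|E_G(X,Y)|+|E_G(Y)|\le |Y|D-|E_G(Y)|\le n^2/4-|E_G(Y)|$, so deleting the at most $n^2/4-|E(G)|\le\eta n^2$ edges inside $Y$ leaves a bipartite graph with parts $X$ and $Y$. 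This buys more than the cited statement: an explicit, sharp edit bound $n^2/4-|E(G)|$, an explicit choice $\eta=\beta/2$ (which also respects the paper's strict-inequality notion of ``$\beta$-close''), and no requirement $n\ge n_0$ beyond trivialities, all without the stability theorem as a black box. What the citation route buys instead is generality (Erd\H{o}s--Simonovits stability applies to all clique sizes, and the paper's authors simply have no need to reprove a classical fact), but for the triangle case used here your elementary degree count fully suffices; just include the one-line disposal of the degenerate cases ($D=0$, i.e.\ $X=\es$) that you flag, noting that an edgeless graph is bipartite.
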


We will also need the following simple lemma, according to which if $R$ is close to being bipartite, then so is $G$.
\begin{lemma}\label{lem:close}
  Let $\eps,\ell_0,n,\delta$ with $\eps>0$, $\ell_0\ge 1$, $n\ge L(\eps,\ell_0)$, and $\delta\in[0,1]$,
  and let $G=(V,E)$ be an $n$-vertex graph.
  Let $R$ be the reduced graph obtained by \cref{lem:reg} with the above parameters.
  Let $\beta\ge (\delta+3\eps)/2$, and assume that $R$ is $(\beta-(\delta+3\eps)/2)$-close to being bipartite.
  Then, $G$ is $\beta$-close to being bipartite.
\end{lemma}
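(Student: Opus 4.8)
The plan is to lift a near-bipartition of the reduced graph $R$ back to a near-bipartition of $G$, and to control the number of "bad" edges of $G$ incurred by this lifting using the structural properties of the regularity partition guaranteed by \cref{lem:reg}. First I would invoke the hypothesis: since $R$ is $(\beta-(\delta+3\eps)/2)$-close to being bipartite, there is a bipartition of $V(R)=\{V_1,\dots,V_\ell\}$ into two colour classes $\cI\cup\cJ$ such that the number of edges of $R$ inside $\cI$ plus inside $\cJ$ is at most $(\beta-(\delta+3\eps)/2)\ell^2$. Now define a bipartition of $V$ by putting $A=V_0^A\cup\bigcup_{V_i\in\cI}V_i$ and $B=V_0^B\cup\bigcup_{V_i\in\cJ}V_i$, where $V_0=V_0^A\cup V_0^B$ is an arbitrary split of the exceptional set; the choice of this split will not matter for the bound.

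The key step is to bound $|E_G(A)|+|E_G(B)|$. I would classify each edge of $G$ lying within $A$ or within $B$ according to which parts of the regularity partition its endpoints fall into. There are three sources of such edges. \textbf{(i)} Edges with at least one endpoint in $V_0$: there are at most $|V_0|\cdot n\le\eps n^2$ of these. \textbf{(ii)} Edges with both endpoints in the same part $V_i$: since $V_i$ is independent in the pure graph $G''$, all such edges of $G$ were deleted in passing from $G$ to $G'$ (together with the low-density pairs), and by \cref{lem:pure:size} the total number of edges of $G$ not surviving in $G''$ is at most $(\delta+3\eps)n^2/2$; this bounds source (ii) together with part of source (iii). \textbf{(iii)} Edges between two distinct parts $V_i,V_j$ that lie on the same side of the bipartition (i.e. $V_i,V_j\in\cI$ or $V_i,V_j\in\cJ$), and whose pair survived in $G''$: each such pair contributes at most $M^2\le (n/\ell)^2$ edges, and the number of such pairs is exactly the number of monochromatic edges of $R$, which is at most $(\beta-(\delta+3\eps)/2)\ell^2$. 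Hence source (iii) contributes at most $(\beta-(\delta+3\eps)/2)\ell^2\cdot(n/\ell)^2=(\beta-(\delta+3\eps)/2)n^2$ edges.

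Adding these up, and being slightly careful that the $(\delta+3\eps)n^2/2$ bound from \cref{lem:pure:size} already absorbs sources (i) and (ii) (since those edges are all among the ones deleted when forming $G''$ — edges meeting $V_0$ are removed, and edges inside a part are removed), we get
\[
|E_G(A)|+|E_G(B)|\le \frac{(\delta+3\eps)n^2}{2} + \left(\beta-\frac{\delta+3\eps}{2}\right)n^2 = \beta n^2,
\]
so $G$ is $\beta$-close to being bipartite, as required. The main thing to get right is the bookkeeping in the previous paragraph: one must ensure that the edges counted by \cref{lem:pure:size} and the edges counted via the monochromatic edges of $R$ together cover \emph{all} edges of $G$ inside $A$ or inside $B$, with no edge counted twice in a way that breaks the inequality (double-counting is harmless here since we only need an upper bound, but under-counting would be fatal). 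The cleanest way to organise this is: every edge of $G$ within $A$ or within $B$ either fails to survive in $G''$ — and there are at most $(\delta+3\eps)n^2/2$ of those in total by \cref{lem:pure:size} — or it survives in $G''$ and then necessarily joins two distinct parts $V_i\ne V_j$ on the same side (it cannot meet $V_0$, which lives outside $G''$, nor lie inside a single part, which is independent in $G''$), so it is counted by a monochromatic edge of $R$, of which there are at most $(\beta-(\delta+3\eps)/2)\ell^2$, each blowing up to at most $M^2\le(n/\ell)^2$ edges. This dichotomy makes the sum telescope to exactly $\beta n^2$.
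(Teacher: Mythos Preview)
Your proof is correct and follows essentially the same approach as the paper: both split the ``bad'' edges of $G$ into those lost when passing to the pure graph $G''$ (bounded by $(\delta+3\eps)n^2/2$ via \cref{lem:pure:size}) and those corresponding to monochromatic $R$-edges (bounded by $(\beta-(\delta+3\eps)/2)\ell^2\cdot M^2$). The only cosmetic difference is that the paper phrases the conclusion as deleting an edge set $F^\circ=F'\cup F''$ and arguing that any odd cycle in $G-F^\circ$ projects to an odd closed walk in $R-F$, whereas you lift the bipartition of $R$ explicitly to $G$ and count monochromatic edges directly; these are equivalent.
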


\begin{proof}
  We show that by removing $\beta n^2$ edges from $G$ we can make it bipartite.
  Indeed,
  write $\gamma=(\delta+3\eps)/2$
  and $\beta_R=\beta-\gamma$.
  Let $F$ be a set of edges of $R$ such that $|F|\le\beta_R\ell^2$ and $R-F$ is bipartite.
  Let $F'=\{\{u,v\}\in E\mid u\in V_i,\ v\in V_j,\ \{V_i,V_j\}\in F\}$,
  and note that $|F'|\le M^2|F|$ for $M=|V_1|\le n/\ell$, so $|F'|\leq \beta_R n^2$.
  Let $G''$ be the corresponding pure graph,
  and denote $F''=E(G)\sm E(G'')$.
  By \cref{lem:pure:size},
  we have that $|F''|\le \gamma n^2$.
  Set $F^\circ=F'\cup F''$.
  Thus,
  $|F^\circ|\le (\beta_R+\gamma)n^2=\beta n^2$.
  Finally, we observe that $G-F^\circ$ is bipartite,
  since any odd cycle in $G-F^\circ$ gives rise naturally to an odd cycle in $R-F$.
\end{proof}

\begin{proof}[Proof of \cref{lem:far:triangle}]
  Apply \cref{thm:stab} with the parameter $\beta'=\beta/2$
  to obtain $\eta,\ell_0$.
  Set $\eps=\min\{\eta/6,\beta/11\}$,
  and let $\gamma=(\delta+3\eps)/2=11\eps/2\le\beta'$.
  Suppose to the contrary that $R$ is triangle-free.
  By \cref{cor:reduced:size},
  $R$ has at least
  \[
    \left(\frac{1}{4}-(\delta+3\eps)/2\right)\ell^2
    \ge\left(\frac{1}{4}-6\eps\right)\ell^2
    \ge\left(\frac{1}{4}-\eta\right)\ell^2
  \]
  edges, where $\ell_0\leq \ell\leq L(\eps,\ell_0)$ is the number of vertices of $R$.
  By \cref{thm:stab}, 
  $R$ is $\beta'$-close to being bipartite.
  Noting that $\beta'=\beta/2 \le \beta-\gamma$,
  we conclude that $R$ is $(\beta-\gamma)$-close to being bipartite.
  By \cref{lem:close},
  $G$ is $\beta$-close to being bipartite, a contradiction.
\end{proof}

Putting it all together, we conclude that $G$ contains a triple of super-regular pairs.
\begin{corollary}\label{cor:supertriple}
  Let $\beta>0$.
  Then, there exist sufficiently small $\delta,\alpha>0$ and sufficiently large $n_0$
  such that the following holds.
  If $G=(V,E)$ is an $n$-vertex graph with $n\ge n_0$ and $|E(G)|\ge n^2/4$
  which is $\beta$-far from being bipartite,
  then there exist pairwise disjoint $A_1,A_2,A_3\subseteq V$
  with $|A_1|=|A_2|=|A_3|\ge \alpha n$
  and a subgraph $G''\subseteq G$,
  such that the pairs $(A_i,A_j)$, $1\le i<j\le 3$,
  are all $(\delta,2\delta)$-super-regular in $G''$.
\end{corollary}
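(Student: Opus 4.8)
The plan is to chain together the three results that precede it: \cref{lem:far:triangle} to locate a triangle in the reduced graph, then the observation (recorded after \cref{lem:reg}) that the edges of the reduced graph correspond to $(\eps,\delta)$-regular pairs in the pure graph, and finally \cref{lem:superreg:3} to trim the three parts into a super-regular triple. First I would fix $\beta>0$ and feed it into \cref{lem:far:triangle} to obtain parameters $\eps>0$ and $\ell_0$; I then set $\delta=8\eps$ as in that lemma, and apply \cref{lem:reg} with inputs $\eps,\ell_0,\delta$ to the graph $G$ (which satisfies $|E(G)|\ge n^2/4$ and is $\beta$-far from bipartite, for $n\ge n_0:=\max\{L(\eps,\ell_0),n_1\}$ where $n_1$ is the threshold coming from \cref{lem:far:triangle}). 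This produces the partition $V_0,V_1,\dots,V_\ell$, the pure graph $G''$, and the reduced graph $R=R(G'',(V_1,\dots,V_\ell),\eps,\delta)$, with $|V_i|=M$ for each $i\in[\ell]$ and $M\ge (1-\eps)n/\ell \ge (1-\eps)n/L$.

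By \cref{lem:far:triangle}, $R$ contains a triangle, say on vertices $V_{i_1},V_{i_2},V_{i_3}$. By the definition of the reduced graph, each of the three pairs $(V_{i_a},V_{i_b})$, $1\le a<b\le 3$, is $(\eps,\delta)$-regular in $G''$, and all three parts have the same size $M$. Now I would apply \cref{lem:superreg:3} (whose hypotheses $0<\eps<1/4$ and $\delta=8\eps>4\eps$ are met, after shrinking $\eps$ if necessary at the start) to these three parts: it yields $M'\ge(1-2\eps)M$ and subsets $A_1\subseteq V_{i_1}$, $A_2\subseteq V_{i_2}$, $A_3\subseteq V_{i_3}$ with $|A_1|=|A_2|=|A_3|=M'$ such that each pair $(A_a,A_b)$ is $(2\eps,\delta-4\eps)$-super-regular in $G''$. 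Setting $\delta':=2\eps$, we have $\delta-4\eps=8\eps-4\eps=4\eps=2\delta'$, so each $(A_a,A_b)$ is $(\delta',2\delta')$-super-regular, matching the desired form. Finally, $|A_a|=M'\ge (1-2\eps)M\ge (1-2\eps)(1-\eps)n/L =: \alpha n$, which is a positive constant fraction of $n$. Renaming $\delta'$ as $\delta$ in the statement gives the claimed $\delta,\alpha>0$ and the subgraph $G''\subseteq G$ with the three super-regular pairs.

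There is essentially no obstacle here — the corollary is a bookkeeping assembly of the lemmas already proved — but the one point that needs care is the parameter arithmetic: one must choose $\eps$ small enough at the outset that \emph{simultaneously} $\eps<1/8$ (so that $2\eps<1/4$ for \cref{lem:superreg:3}), $\eps\le\eta/6$ and $\eps\le\beta/11$ (the constraints from \cref{lem:far:triangle}), and that the final super-regularity parameter $\delta'=2\eps$ and density $2\delta'=4\eps$ come out positive and consistent. One also has to track that the $G''$ delivered is literally the pure graph of \cref{lem:reg}, so that ``$(A_a,A_b)$ super-regular in $G''$'' and ``$G''\subseteq G$'' both hold verbatim. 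All of these are routine once the dependency order $\beta\rightsquigarrow(\eps,\ell_0)\rightsquigarrow\delta=8\eps\rightsquigarrow n_0$ is respected.
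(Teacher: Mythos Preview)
Your proposal is correct and follows essentially the same route as the paper: invoke \cref{lem:far:triangle} to get $\eps,\ell_0$, run the degree-form regularity lemma with $\delta'=8\eps$, extract a triangle in the reduced graph, and then apply \cref{lem:superreg:3} to the corresponding three clusters in the pure graph, finishing with the identification $\delta=2\eps$ and $\alpha$ proportional to $1/L$. The only cosmetic differences are your lower bound $M\ge(1-\eps)n/L$ versus the paper's $M\ge n/(L+1)$, and your explicit mention of an $n_1$ threshold (which \cref{lem:far:triangle} does not actually introduce beyond $n\ge L(\eps,\ell_0)$); neither affects the argument.
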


\begin{proof}
  Let $\eps=\eps(\beta)>0$ be sufficiently small
  and $\ell_0=\ell_0(\beta)$ be sufficiently large
  so that the statement of \cref{lem:far:triangle} holds.
  Apply the regularity lemma with the parameters
  $\eps,\ell_0,n,\delta'$ with $n\ge L=L(\eps,\ell_0)$ and $\delta'=8\eps$.
  Let $G''$ and $R$ be the pure graph and the reduced graph, respectively.
  According to \cref{lem:far:triangle}, $R$ contains a triangle.
      Assume that the vertices of this triangle are $V_1,V_2,V_3$, and let $M=|V_1|\ge n/(L+1)$.
  Note that each of the pairs $(V_1,V_2)$, $(V_1,V_3)$ and $(V_2,V_3)$
  is $(\eps,\delta')$-regular in $G''$.
  We apply \cref{lem:superreg:3} to the triple $(V_1,V_2,V_3)$
  to obtain subsets $A_1\subseteq V_1$, $A_2\subseteq V_2$ and $A_3\subseteq V_3$
  with $|A_1|=|A_2|=|A_3|=M'\ge(1-2\eps)M$,
  such that the pairs $(A_1,A_2)$, $(A_1,A_3)$ and $(A_2,A_3)$
  are all $(2\eps,\delta'-4\eps)$-super-regular in $G''$.
  We finish by choosing $\delta=2\eps$
  and $\alpha=(1-2\eps)/(L+1)$.
\end{proof}

\subsubsection{Super-regular tripartite graphs}

Next, we show that a triple of disjoint $n$-vertex sets, such that each pair of sets forms a super-regular pair, admits a strong $d$-rigid partition for all $d=O(n/\log{n})$.

\begin{proposition}\label{prop:triangle}
  For every $\delta>0$
  there exists $c>0$
  such that the following holds.
  Let $n,d\ge 1$
  be such that $d\le cn/\log{n}$.
  Let $G=(V,E)$ be a tripartite graph with sides $A_1,A_2,A_3$,
  each of size $n$.
  Assume that for all $1\leq i<j\leq 3$, the pair $(A_i,A_j)$ is $(\delta,2\delta)$-super-regular in $G$.
  Then,
  $G$ admits a strong $d$-rigid partition.
\end{proposition}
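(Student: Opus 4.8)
The goal is to build a strong $d$-rigid partition $(V_1,\dots,V_d)$ of the tripartite graph $G$, i.e.\ parts with $G[V_i,V_j]$ connected for all $i\le j$. The natural first move is to randomly colour each side. Concretely, assign each vertex of $A_1\cup A_2\cup A_3$ a colour in $[d]$ independently and uniformly; write $A_k^{(i)}$ for the vertices of $A_k$ receiving colour $i$, and set $V_i=A_1^{(i)}\cup A_2^{(i)}\cup A_3^{(i)}$. Since $d\le cn/\log n$, a Chernoff bound gives that \whp{} every $A_k^{(i)}$ has size $(1\pm o(1))n/d\ge n/(2d)$. The whole proof then reduces to showing that, \whp{}, each $G[V_i]$ and each $G[V_i,V_j]$ is connected.

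The key structural tool is super-regularity: because $(A_k,A_\ell)$ is $(\delta,2\delta)$-super-regular in $G$, each vertex of $A_k$ has at least $2\delta n$ neighbours in $A_\ell$, and — crucially — for any two subsets $X\subseteq A_k$, $Y\subseteq A_\ell$ of size at least $\delta n\ge \delta\cdot 2d\cdot|X'|$, there are edges between them; more usefully, $(\delta, 2\delta-\delta)=(\delta,\delta)$-denseness means $d(X,Y)\ge\delta$ whenever $|X|\ge\delta n$, $|Y|\ge\delta n$. Each colour class $A_k^{(i)}$ has size $\ge n/(2d)\ge \delta n$ once $c$ (hence $d$) is small enough relative to $\delta$, so every pair of colour classes in different sides is joined by many edges. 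In particular $E(A_k^{(i)},A_\ell^{(j)})\neq\varnothing$ for all $k\ne\ell$ and all $i,j$; this already shows that within $G[V_i,V_j]$ the three ``bipartite blocks'' $A_1^{(i)}\!-\!A_2^{(j)}$, etc., once each is shown connected, are glued together into one component (exactly as in the proof of \cref{prop:approx:close}). So it suffices to prove: for each pair of sides $k\ne\ell$ and each pair of colours $i,j$, the bipartite graph between $A_k^{(i)}$ and $A_\ell^{(j)}$ is connected \whp{}, with room to union-bound over the $O(d^2)$ choices.

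To get connectivity of one such bipartite piece $H=G[A_k^{(i)},A_\ell^{(j)}]$, I would argue that $H$ has no sparse cut. Fix a partition $A_k^{(i)}=S\cup S'$, $A_\ell^{(j)}=T\cup T'$ witnessing a disconnection, so no edges run $S\!-\!T'$ nor $S'\!-\!T$. If $|S|,|T'|\ge\delta n$ then $(\delta,\delta)$-denseness forces an edge between them — contradiction; so either $|S|<\delta n$ or $|T'|<\delta n$, and symmetrically for $S',T$. The four small-set cases are handled by a direct first-moment/union-bound computation over the randomness of the colouring: a vertex $v\in A_k$ has $\ge 2\delta n$ neighbours in $A_\ell$, and the probability that a given set of $<\delta n$ of its colour-$i$ "siblings" captures all colour-$j$ neighbours of $v$ — while $v$'s other colour-$i$ siblings see none of $T$ — decays like $(1-1/d)^{\Omega(\delta n)}\le \exp(-\Omega(\delta n/d))$, which is $o(d^{-C})$ for any constant $C$ since $n/d\ge (\log n)/c$ and $c$ is small. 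Summing over the (at most $n^{O(1)}$, or more carefully $2^{|A_k^{(i)}|}\cdot\ldots$) choices of cut is the delicate point: one should phrase it not as a union bound over all cuts but as the statement "every vertex $v$ has colour-$i$ siblings adjacent into $A_\ell^{(j)}$ forming, together with the colour-$j$ vertices they reach, a connected piece", mirroring Claims~4.10 and~4.11 in the close-to-bipartite case — i.e.\ show (a) every vertex of $A_k^{(i)}\cup A_\ell^{(j)}$ reaches a fixed robust "core" (say the colour-$j$ vertices of high degree into $A_k^{(i)}$) by a path of length $\le 2$, and (b) any two core vertices have a common neighbour in $A_k^{(i)}$; both are one-line Chernoff estimates. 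That yields connectivity of $H$ and hence of all $G[V_i,V_j]$, and taking $i=j$ gives connectivity of $G[V_i]\supseteq G[A_1^{(i)},A_2^{(i)}]$.

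The main obstacle is the same one that makes \cref{prop:approx:close} nontrivial: controlling $d$ colour classes simultaneously when $d$ grows with $n$. The "two common neighbours / path of length two" packaging is what keeps the failure probability at $o(n^{-C})$ per event, so that the union bound over the $\Theta(d^2)$ pairs and the $\Theta(n)$ vertices still gives $1-o(1)$; here the range $d\le cn/\log n$ (rather than $cn/\log^2 n$) is exactly what is affordable because the relevant exponent is $\exp(-\Omega(n/d))=\exp(-\Omega(\log n/c))=n^{-\Omega(1/c)}$, beating a single polynomial factor in $n$, whereas in \cref{prop:approx:close} an extra $\log n$ is lost in defining the "large" vertices $L$. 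I would also need to be slightly careful that the bound $d\le cn/\log n$ and the requirement $n/(2d)\ge\delta n$ are compatible — they are, since $\delta$ is a fixed constant and $c$ may be chosen after $\delta$, with $c<1/(2\delta\log n_0)\cdot(\text{const})$ absorbed into "$c$ sufficiently small".
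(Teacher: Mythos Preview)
There is a genuine gap. Your colour classes $A_k^{(i)}$ have size about $n/d$, and for $d\le cn/\log n$ this is roughly $(\log n)/c$ --- logarithmic, not linear. So the assertion $n/(2d)\ge\delta n$ is simply false in the regime you care about (it would force $d\le 1/(2\delta)$, a constant), and your attempted fix ``$c<1/(2\delta\log n_0)$'' does not help: you need the inequality for all $n$, not just for $n=n_0$. Consequently you cannot apply $(\delta,\delta)$-denseness to a pair of colour classes, and the sentence ``every pair of colour classes in different sides is joined by many edges'' is unjustified.

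Your fallback ``core'' argument also breaks, specifically at step (b): two vertices $u_1,u_2\in A_\ell$ need not have \emph{any} common neighbour in $A_k$. Super-regularity of $(A_k,A_\ell)$ only gives each of them $\ge 2\delta n$ neighbours in $A_k$; for small $\delta$ these neighbourhoods can be disjoint without violating regularity. So ``any two core vertices have a common neighbour in $A_k^{(i)}$'' is not a one-line Chernoff estimate --- the expected count can be zero. More generally, showing that each \emph{bipartite} block $G[A_k^{(i)},A_\ell^{(j)}]$ is connected is exactly the hard part, and your sketch does not do it.

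The paper avoids this by exploiting the \emph{third} side. It never tries to connect $x,y\in A_i$ through a single pair $(A_i,A_j)$; instead, for $x,y\in A_i$ it looks at $N(x)\cap A_j$ and $N(y)\cap A_k$ with $\{j,k\}=[3]\sm\{i\}$. Both sets have size $\ge 2\delta n\ge \delta n$, so $(\delta,\delta)$-denseness of $(A_j,A_k)$ gives $\Omega(\delta^3 n^2)$ edges between them, and a short first-moment argument shows that with high probability one such edge has its endpoints in the prescribed colour classes $V_r,V_s$. This, together with the easy fact that every vertex has a neighbour in each $A_j\cap V_k$ (your Claim-type estimate), yields connectivity of every $G[V_r,V_s]$ directly, without ever needing the individual bipartite blocks to be connected.
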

\begin{proof}

Let $c>0$ be a constant to be chosen later,
and take $1\le d\le cn/\log{n}$.
 Let $V_1,\ldots,V_d$ be a uniform random $d$-colouring of $V$;
 namely,
 for every $v\in V$
 independently,
 $\pr(v\in V_i)=1/d$ for every $i\in[d]$.
 
\begin{claim}\label{claim:n1}
  With probability at least $1-6n^{2-2\delta/c}$,
  for all $i\in[3]$, $v\in A_i$, $j\in[3]\sm\{i\}$ and $1\leq k\leq d$,
  $v$ has neighbour in $V_k\cap A_j$.
\end{claim}

\begin{proof}
  For every $i\in[3]$, $v\in A_i$, $j\in[3]\sm\{i\}$ and $k\in [d]$,
  let $\cE_{v,j,k}$ be the event that $v$ does not have a neighbour in $A_j\cap V_k$.
  Note that by the $(\delta,2\delta)$-super-regularity of $(A_i,A_j)$,
  we have $\deg(v,A_j)\ge 2\delta n$.
  Thus,
  \[
    \pr(\cE_{v,j,k})\leq (1-1/d)^{2\delta n} \leq e^{-2\delta n/d} \le n^{-2\delta/c}.
  \]
  By the union bound
  over all $i\in[3]$,
  $v\in A_i$,
  $j=[3]\sm\{i\}$,
  and $k\in[d]$,
  the probability that
  $\cE_{v,j,k}$ occurs for
  some $v,j,k$
  is at most
  \[
      3\cdot n\cdot 2\cdot d \cdot n^{-2\delta/c} \leq 6 n^{2-2\delta/c}.
  \]
  Therefore, with probability at least $1-6n^{2-2\delta/c}$,
  for all $i\in[3]$, $v\in A_i$, $j\in[3]\sm\{i\}$ and $k\in[d]$,
  $v$ has neighbour in $A_j\cap V_k$.
\end{proof}

\begin{claim}\label{claim:n2}
  With probability at least $1-6 n^{4-2\delta^3/c}$,
  for all $i\in[3]$,
  $x\ne y$ in $A_i$,
  and $r,s\in[d]$,
  $E(N(x)\cap V_r,N(y)\cap V_s)\ne\es$.
\end{claim}

\begin{proof}
  For $i\in[3]$,
  $x\ne y$ in $A_i$,
  and $r,s\in[d]$, 
  let $\cE_{x,y,r,s}$ be the event that
  $E(N(x)\cap V_r,N(y)\cap V_s)=\es$.
  Fix such $i, x,y ,r,s$ and
  let $\{j,k\}=[3]\sm\{i\}$.
  By the super-regularity of $(A_i,A_j)$ in $G$,
  $|N(x)\cap A_j|\ge 2\delta n > \delta n$.
  By the super-regularity of $(A_i,A_k)$,
  $|N(y)\cap A_k|\ge 2\delta n > \delta n$.
  Hence, by the ($\delta$,$\delta$)-density of $(A_j,A_k)$
  (which follows from its $(\delta,2\delta)$-regularity; see notes after the corresponding definitions),
  $|E(N(x)\cap A_j,N(y)\cap A_k)|\ge\delta\cdot (2\delta n)^2= 4 \delta^3 n^2$.
  Let 
  \[
    L=\left\{v\in N(x)\cap A_j:\, \deg(v,N(y)\cap A_k)\geq 2\delta^3 n\right\}.
  \]
  Then, $|L|\geq 2\delta^3 n$
  (otherwise,
   $|E(N(x)\cap A_j,N(y)\cap A_k)|\le 2\delta^3 n \cdot n + (n-2\delta^3 n)\cdot 2\delta^3 n
    < 4\delta^3 n^2$,
   a contradiction). 
  Therefore,
  \[
    \pr(L\cap V_r=\es) \leq (1-1/d)^{2\delta^3 n} \leq e^{-2\delta^3 n/d} \le n^{-2\delta^3/c}.
  \]
  Conditioning on $L\cap V_r \neq \es$,
  there are at least $2\delta^3 n$ vertices in $N(y)\cap A_k$
  that are adjacent to at least one vertex in $N(x)\cap A_j\cap V_r$,
  and therefore,
  \begin{align*}
  & \pr(E(N(x)\cap A_j\cap V_r,N(y)\cap A_k\cap V_s)=\es  \,| \, L\cap V_r\ne \es) \\
    & \leq (1-1/d)^{2\delta^3 n} \leq n^{-2\delta^3/c}.
  \end{align*}
  Hence,
  \begin{align*}
    & \pr(\cE_{x,y,r,s})\leq 
    \pr(E(N(x)\cap A_j\cap V_r,N(y)\cap A_k\cap V_s)=\es)\leq 2 n^{-2\delta^3/c}.
  \end{align*}
  By the union bound
  over all $i\in[3]$,
  $x\ne y$ in $A_i$,
  and $r,s\in[d]$,
  the probability that
  $\cE_{x,y,r,s}$ occurs for
  some $x,y,r,s$
  is at most
  \[
    3n^2 d^2 \cdot 2n^{-2\delta^3/c}\leq 6n^{4-2\delta^3/c}.
  \]
  Thus, with probability at least $1-6 n^{4-2\delta^3/c}$,
  for all $i\in[3]$,
  $x\ne y$ in $A_i$,
  and $r,s\in[d]$,
  we have that $E(N(x)\cap V_r,N(y)\cap V_s)\ne\es$.
\end{proof}

  Let $c=2\delta^3/(5+\log_2(12))$.
  Let $\cE_1$ be the event that there exist $i\in[3]$, $v\in A_i$, $j\in[3]\sm\{i\}$, and $k\in[d]$
  such that $v$ does not have a neighbour in  $V_k\cap A_j$.
  By \cref{claim:n1}, $\pr(\cE_1)\le 6n^{2-2\delta/c}$.
  Let $\cE_2$ be the event that there exist $i\in[3]$, $x,y\in A_i$ such that $x\ne y$, and $r,s\in[d]$
  such that $E(N(x)\cap V_r,N(y)\cap V_s)=\es$.
  By \cref{claim:n2}, $\pr(\cE_2)\le 6n^{4-2\delta^3/c}$.
  Thus, by the union bound, noting that $4-2\delta^3/c > 2-2\delta/c$,
  \[
    \pr(\cE_1\cup \cE_2)\leq 6n^{2-2\delta/c}+6 n^{4-2\delta^3/c}
      \leq 6\left(2 ^{2-2\delta/c}+ 2^{4-2\delta^3/c}\right)
      \leq 12 \cdot 2^{4-2\delta^3/c}<1.
  \]
  Hence, there exists a partition $(V_1,\ldots,V_d)$ for which both $\cE_1$ and $\cE_2$ do not occur.
  We will show that $(V_1,\ldots,V_d)$ is a strong $d$-rigid partition of $G$.

  First, let $r\in[d]$.
  We will show that $G[V_r]$ is connected. Let $x,y\in V_r$ with $x\ne y$. We divide into two cases: 
  \begin{description}
    \item[Case 1.] 
      If $x,y\in A_i$ for some $i\in [3]$, then (since $\cE_2$ does not occur)
      there is some $u\in N(x)\cap V_r$ and $v\in N(y)\cap V_r$ such that $\{u,v\}\in E$.
      Therefore, $x$ and $y$ are connected  in $G[V_r]$ by a path of length three. 
    \item[Case 2.]
      If $x\in A_i$ and $y\in A_j$ for $i\neq j$,
      then (since $\cE_1$ does not occur) there exists $z\in N(y)\cap V_r \cap A_i$.
      By the previous case, there is a path in $G[V_r]$ between $x$ and $z$,
      and therefore there is a path in $G[V_r]$ between $x$ and $y$.
  \end{description}
  We conclude that $G[V_r]$ is connected.
  Now, let $r,s\in[d]$ with $r\ne s$.
  We will show that $G[V_r,V_s]$ is connected.
  Let $x,y\in V_r\cup V_s$ with $x\ne y$. We divide into four cases:
  \begin{description}
    \item[Case 1.] 
      Assume $x\in V_r\cap A_i$ and $y\in V_s\cap A_i$ for some $i\in[3]$.
      Then (since $\cE_2$ does not occur)
      there is some $u\in N(x)\cap V_s$ and $v\in N(y)\cap V_r$
      such that $\{u,v\}\in E$.
      Thus, $x$ and $y$ are connected in $G[V_r,V_s]$ by a path of length three. 
    \item[Case 2.] 
      Assume $x\in V_r\cap A_i$ and $y\in V_r\cap A_j$ for some $i\neq j$.
      Then (since $\cE_1$ does not occur) there is some $z\in N(y)\cap V_s\cap A_i$.
      By Case~1, there is a path in $G[V_r,V_s]$ between $x$ and $z$.
      Thus, there is a path in $G[V_r,V_s]$ between $x$ and $y$.
    \item[Case 3.] 
      Assume $x\in V_r\cap A_i$ and $y\in V_s\cap A_j$ for some $i\neq j$.
      Let $k$ be the unique element in $[3]\sm \{i,j\}$.
      Then (since $\cE_1$ does not occur)
      there is some $z\in N(y)\cap V_r\cap A_k$.
      By Case~2, there is a path in $G[V_r,V_s]$ between $x$ and $z$.
      Thus, there is a path in $G[V_r,V_s]$ between $x$ and $y$.
    \item[Case 4.] 
      Assume $x,y\in V_r\cap A_i$ for some $1\leq i\leq 3$.
      Let $j\neq i$.
      Then (since $\cE_1$ does not occur)
      there is some $z\in N(y)\cap V_s\cap A_j$.
      By Case~3, there is a path in $G[V_r,V_s]$ from $x$ to $z$.
      Therefore, there is a path in $G[V_r,V_s]$ from $x$ to $y$.
  \end{description}
  We conclude that $G[V_r,V_s]$ is connected.
  Hence, $(V_1,\ldots,V_d)$ is a strong $d$-rigid partition for $G$.
\end{proof}

\subsubsection{Putting it all together}
To conclude, we use the following simple lemma.
\begin{lemma}\label{lem:comb}
  Let $0\le 2d\le n$,
  and let $G=(V,E)$ be an $n$-vertex graph with $\delta(G)\ge (n+2d)/2-1$.
  Then, for every $A$ with $2d\le |A|<n$
  there exists $x\in V\sm A$
  such that $\deg(x,A)\ge d$.
\end{lemma}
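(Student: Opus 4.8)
The plan is a double-counting argument preceded by a short reduction. Write $B=V\sm A$ and $a=|A|$, and note $B\ne\es$ since $a<n$. I would first dispose of the case $a\ge n/2$ directly, without even invoking any contradiction hypothesis: any $x\in B$ satisfies
\[
  \deg(x,A)\ge\deg(x)-(|B|-1)\ge\delta(G)-(n-a-1)\ge\frac{n+2d}{2}-1-(n-a-1)=a+d-\frac n2\ge d,
\]
so such an $x$ already works. Hence from now on I may assume $2d\le a<n/2$; note that this in particular forces $n>4d$.

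Next, assume for contradiction that $\deg(x,A)\le d-1$ for every $x\in B$, and count $|E(A,B)|$ in two ways. From the $B$-side, $|E(A,B)|=\sum_{x\in B}\deg(x,A)\le(d-1)(n-a)$. From the $A$-side, each $v\in A$ has $\deg(v,B)=\deg(v)-\deg(v,A)\ge\delta(G)-(a-1)\ge(n+2d)/2-a$, so $|E(A,B)|\ge a\bigl((n+2d)/2-a\bigr)$. Combining these yields
\[
  a\Bigl(\frac{n+2d}{2}-a\Bigr)\le(d-1)(n-a).
\]

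To close, I would study $f(a):=a\bigl((n+2d)/2-a\bigr)-(d-1)(n-a)$ as a function of a real variable $a$; since its leading coefficient is $-1$, $f$ is concave. A one-line computation gives $f(2d)=n-2d$ and $f(n/2)=n/2$, and in the range at hand ($n>4d$, with $d\ge1$) both values are strictly positive. By concavity, $f(a)\ge\min\{f(2d),f(n/2)\}>0$ for every $a\in[2d,n/2]$, contradicting the displayed inequality $f(a)\le0$. This completes the proof. The argument is entirely elementary; the only point requiring a little care is pinning down the range of $a$ — in particular, reducing to $a<n/2$ first, so that the two anchor points $2d$ and $n/2$ bracket an interval on which $f$ is positive and the concavity argument applies. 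I expect this bookkeeping to be the only (mild) obstacle.
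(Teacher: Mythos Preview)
Your proof is correct and follows essentially the same approach as the paper: split off the case $a\ge n/2$ by a direct degree count, and for $2d\le a<n/2$ lower-bound $|E(A,B)|$ from the $A$-side using the minimum degree condition. The only cosmetic difference is in the endgame: the paper manipulates the lower bound directly to get $|E(A,B)|\ge a(n-2a)/2+ad\ge d(n-2a)+ad=d(n-a)$ (using $a\ge 2d$ and $n-2a>0$) and then applies pigeonhole, whereas you phrase it as a contradiction and verify the resulting quadratic inequality via concavity and endpoint evaluation; both are the same double-counting argument.
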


\begin{proof}
  Set $a=|A|$.   If $a\ge n/2$
  then for every $x\notin A$,
  $\deg(x,A)\ge\delta(G)-(n-a-1)\ge d$.
  Hence, assume $2d\le a<n/2$.
  In this case,
  \begin{align*}
    |E(A,V\sm A)|
    &\ge a\left(\delta(G)-(a-1)\right)
    \ge a(n/2+d-a)
    \\ & =an/2+ad-a^2
  = a(n-2a)/2 +ad \ge d(n-2a)+ad =d(n-a).
  \end{align*}
  Thus, by the pigeonhole principle,
  there exists $x\in V\sm A$ with at least $d$ neighbours in $A$.
\end{proof}

Let us mention that an argument equivalent to \cref{lem:comb}
was used in the proof of Theorem~36 in~\cite{Jac16}.
We are ready to prove \cref{prop:approx:far}.

\begin{proof}[Proof of \cref{prop:approx:far}]
  Let $\delta,\alpha,n_0$ be the constants (depending on $\beta$) guaranteed by \cref{cor:supertriple},
  and let $n\ge \max\{n_0,2/\alpha\}$. Let $c'=c'(\delta)$ be the constant guaranteed by \cref{prop:triangle}.
  Set $c=\alpha c'/2$, and let $1\le d\le cn/\log{n}$.
  Let $G=(V,E)$ be an $n$-vertex graph with $\delta(G)\ge (n+2d)/2-1$
  which is $\beta$-far from being bipartite.
  We want to show that $G$ is $d$-rigid.
  By \cref{cor:supertriple},
  since $|E(G)|\ge n^2/4$ and since $G$ is $\beta$-far from being bipartite,
  there exist pairwise disjoint vertex sets $A_1,A_2,A_3\subseteq V$
  with $|A_1|=|A_2|=|A_3|=N\ge \alpha n\ge 2$
  and a subgraph $G''\subseteq G$
  such that the pairs $(A_i,A_j)$, $1\le i<j\le 3$
  are all $(\delta,2\delta)$-super-regular in $G''$.
  Let $H=G''[A_1\cup A_2\cup A_3]$ be the induced tripartite graph.
  By \cref{prop:triangle},
   $H$ admits a strong $D$-rigid partition
  for $D=\floor{c'N/\log{N}}\ge c n/\log{n}\ge d$.
  In particular, by \cref{thm:strong_rigid_partition}, $H$ is a $d$-rigid subgraph of $G$ with $3N\ge 2d$ vertices.
  Let $A$ be a largest vertex set of $G$ for which $G[A]$ is $d$-rigid.
  Assume to the contrary that $A\ne V$.
  Then, since $|A|\ge 2d$, by \cref{lem:comb}, there exists $x\in V\sm A$
  such that $\deg(x,A)\ge d$.
  Thus, by \cref{lem:extension}, $G[A\cup\{x\}]$ is also $d$-rigid, a contradiction.
\end{proof}

\section{Concluding remarks}\label{sec:concluding}

In this work, we studied the minimum degree conditions that guarantee the generic rigidity of a graph in $\mathbb{R}^d$. For small values of $d$ ($d=O(\sqrt{n})$), we established that every $n$-vertex graph with minimum degree at least $(n+d)/2 - 1$ is $d$-rigid (\cref{thm:exact}), matching the known $d$-connectivity threshold and therefore confirming \cref{conj:main} in that range. For larger values of $d$ ($d = O(n/\log^2 n)$), we showed that the minimum degree condition $(n+2d)/2 - 1$ ensures $d$-rigidity (\cref{thm:approx}), which is tight up to a factor of two in the coefficient of $d$.

Furthermore, as a corollary of our proof, we obtained a new lower bound on the pseudoachromatic number of a graph, depending on its minimum degree (\cref{thm:pseudo}), which may be of independent interest.

Despite this progress, \cref{conj:main}, which states that 
 the minimum degree threshold for $d$-rigidity is $\max\{(n+d)/2 - 1,\,2d - d(d+1)/n\}$, remains open. Proving it in full generality may require new techniques, particularly in the challenging regime where $d$ is linear in $n$. We hope that the methods and ideas presented here will stimulate further research on these and related questions.

\paragraph*{Acknowledgements}
We thank Yuval Peled for his helpful discussions and for his contribution to the formulation of \cref{lem:simpvx}.

\bibliography{library}
\end{document}